\documentclass{amsart}

\usepackage[a4paper,hmargin=3.45cm,vmargin=4cm]{geometry}
\usepackage{amsfonts,amssymb,amscd,amstext}
\usepackage[utf8]{inputenc}
\usepackage[colorlinks=true,linkcolor=blue,citecolor=red]{hyperref}

\pretolerance=0

\renewcommand{\leq}{\leqslant}
\renewcommand{\geq}{\geqslant}
\newcommand{\ptl}{\partial}
\newcommand{\rr}{{\mathbb{R}}}
\newcommand{\rrn}{\mathbb{R}^{n+1}}
\newcommand{\la}{\lambda}
\newcommand{\sph}{\mathbb{S}}
\newcommand{\nn}{\mathbb{N}}
\newcommand{\sub}{\subset}
\newcommand{\subeq}{\subseteq}
\newcommand{\escpr}[1]{\big<#1\big>}
\newcommand{\Sg}{\Sigma} \newcommand{\sg}{\sigma}
\newcommand{\Om}{\Omega}
\newcommand{\barOm}{\overline{\Omega}}

\newcommand{\var}{\varphi}
\newcommand{\ric}{\text{Ric}}
\newcommand{\ind}{\mathcal{Q}}
\newcommand{\indo}{\mathcal{I}}
\newcommand{\h}{\mathcal{H}}
\newcommand{\ga}{\gamma}

\DeclareMathOperator{\divv}{div}

\setlength{\parskip}{0.5em}

\newtheorem{theorem}{Theorem}[section]
\newtheorem{proposition}[theorem]{Proposition}
\newtheorem{lemma}[theorem]{Lemma}
\newtheorem{corollary}[theorem]{Corollary}

\theoremstyle{definition}

\newtheorem{remark}[theorem]{Remark}
\newtheorem{remarks}[theorem]{Remarks}
\newtheorem{example}[theorem]{Example}

\theoremstyle{remark}

\numberwithin{equation}{section}

\begin{document}

\bibliographystyle{amsplain}  

\title[Isoperimetry and stability for perturbations of Gaussian measures]{Isoperimetric and stable sets for log-concave perturbations of  Gaussian measures}

\author[C.~Rosales]{C\'esar Rosales}
\address{Departamento de Geometr\'{\i}a y Topolog\'{\i}a \\
Universidad de Granada \\ E--18071 Granada \\ Spain}
\email{crosales@ugr.es}

\date{\today}

\thanks{The author is supported by MICINN-FEDER grant MTM2010-21206-C02-01, Junta de Andaluc\'ia grants FQM-325 and P09-FQM-5088, and the grant PYR-2014-23 of the GENIL program of CEI BioTic GRANADA} 

\subjclass[2000]{49Q20, 53A10} 

\keywords{Log-concave densities, Gaussian measures, isoperimetric problems, stable sets, free boundary hypersurfaces}

\begin{abstract}
Let $\Om$ be an open half-space or slab in $\rrn$ endowed with a perturbation of the Gaussian measure of the form $f(p):=\exp(\omega(p)-c|p|^2)$, where $c>0$ and $\omega$ is a smooth concave function depending only on the signed distance from the linear hyperplane parallel to $\ptl\Om$. In this work we follow a variational approach to show that half-spaces perpendicular to $\ptl\Om$ uniquely minimize the weighted perimeter in $\Om$ among sets enclosing the same weighted volume. The main ingredient of the proof is the characterization of half-spaces parallel or perpendicular to $\ptl\Om$ as the unique stable sets with small singular set and null weighted capacity. Our methods also apply for $\Om=\rrn$, which produces in particular the classification of stable sets in Gauss space and a new proof of the Gaussian isoperimetric inequality. Finally, we use optimal transport to study the weighted minimizers when the perturbation term $\omega$ is concave and possibly non-smooth.  
\end{abstract}

\maketitle

\thispagestyle{empty}

\section{Introduction}
\label{sec:intro}
\setcounter{equation}{0}

The \emph{Gaussian isoperimetric inequality} sta\-tes that in Euclidean space $\rrn$ endowed with the weight $\ga_c(p):=\exp(-c |p|^2)$, $c>0$, any half-space is an isoperimetric region, i.e., it has the least weighted perimeter among sets enclosing the same weighted volume. This result was independently proved in the mid-seventies by Sudakov and Tirel'son \cite{st}, and Borell~\cite{borell}, by means of an approximation argument which is sometimes attributed to Poincar\'e. As it is explained in \cite[Thm.~18.2]{gmt} and \cite[Prop.~6]{rosisoperimetric}, the Gaussian probability measure in $\rrn$ is the limit of orthogonal projections into $\rrn$ of high dimensional spheres $\sph^{m}(\sqrt{m+1})$ with uniform probability densities. Hence, the isoperimetric inequality in Gauss space can be deduced as the limit of the spherical isoperimetric inequality. The complete characterization of equality cases does not follow from the previous approach and was subsequently studied by Carlen and Kerce~\cite{ck}. Indeed, they discussed when equality holds in a more general functional inequality of Bobkov, and showed in particular that any isoperimetric set in the Gauss space coincides, up to a set of volume zero, with a half-space. 

Other proofs of the Gaussian isoperimetric inequality were found by Ehrhard~\cite{ehrhard}, who adapted Steiner symmetrization to produce a Brunn-Minkowski inequality in the Gaussian context, by Bakry and Ledoux~\cite{bl}, see also \cite{ledoux-gaussian}, who gave a semigroup proof involving Ornstein-Uhlenbeck operators, and by Bobkov~\cite{bobkov2}, as a consequence of sharp two point functional inequalities and the central limit theorem. Bobkov's inequalities were later extended to the sphere and used to deduce isoperimetric estimates for the unit cube by Barthe and Maurey~\cite{bm}. In \cite{morgandensity}, Morgan employed a Heintze-Karcher type inequality to provide a variational proof, which also includes the uniqueness of isoperimetric regions. More recently, Cianchi, Fusco, Maggi and Pratelli~\cite{cfmp}, and Barchiesi, Brancolini and Julin~\cite{bbj} obtained sharp quantitative estimates for the Gaussian isoperimetric inequality. The paper \cite{cfmp} also characterizes the equality cases after a careful study of Ehrhard symmetrization. An interesting result of Bobkov and Udre~\cite{bobkov-udre} shows that a symmetric probability measure $\mu$ on $\rr$ such that all coordinate half-spaces in $\rrn$ are isoperimetric sets for the product measure $\mu^{n+1}$ is necessarily of Gaussian type.

The mathematical interest in the Gaussian isoperimetric question comes also from its wide range of applications in Probability Theory and Functional Analysis. The surveys of Ledoux~\cite{ledoux-survey}, \cite{markov} show its relation to the concentration of measures phenomenon, the theory of spectral gaps for diffusion generators, and the logarithmic Sobolev inequalities. Morgan's book~\cite[Ch.~18]{gmt} includes a brief description of how the Gaussian logarithmic Sobolev inequality was used by Perelman in his proof of the Poincar\'e conjecture. Applications to Brownian motion appear in Borell \cite{borell}. 

Some of the methods employed to solve the isoperimetric problem in Gauss space imply isoperimetric comparisons for certain perturbations of the Gaussian measure. For example, Bakry and Ledoux~\cite{bl}, see also Bobkov~\cite{bobkov-perturbation}, obtained a L\'evy-Gromov type inequality by showing that, for probability measures on $\rrn$ having a log-concave density with respect to $\ga_c$, i.e., those of the form 
\begin{equation}
\label{eq:miau}
\exp(\delta(p)-c|p|^2),\quad\text{for some concave function } \delta(p),
\end{equation}
the weighted perimeter of a set is greater than or equal to the perimeter of a half-space with the same volume for the measure $\ga_c$ scaled to have unit volume. Indeed, this comparison is still valid for weighted Riemannian manifolds where the associated Bakry-\'Emery-Ricci curvature has a positive lower bound, see also Bayle~\cite[Ch.~3]{bayle-thesis} and Morgan~\cite{morgandensity}. A recent result of Milman~\cite{milman} contains more general isoperimetric inequalities for weighted Riemannian manifolds depending on a curvature-dimension-diameter condition. Other modifications of Gaussian weights were considered by Fusco, Maggi and Pratelli~\cite{fmp}, who classified the isoperimetric sets for the measures $\exp(-|x|^2/2)\,dx\,dy$ in $\rr^n\times\rr^k$.

\subsection{The partitioning problem for Euclidean densities}
\noindent

In the present paper we study the isoperimetric question in Euclidean open sets for some log-concave perturbations of the Gaussian measure. In order to motivate and describe our results in detail we need to introduce some notation and definitions.

Let $\Om$ be an open subset of $\rrn$. By a \emph{density} on $\Om$ we mean a continuous positive function $f=e^\psi$ on $\Om$ which is used to weight the Hausdorff measures in $\rrn$. In particular, for any open set $E\sub\Om$ with smooth boundary $\ptl E\cap\Om$, the \emph{weighted volume} and the \emph{weighted perimeter in} $\Om$ of $E$ are given by
\begin{align*}
V_f(E):=\int_E f\,dv, \qquad P_f(E,\Om):=\int_{\ptl E\cap\Om} f\,da,
\end{align*}
where $dv$ and $da$ denote the Euclidean elements of volume and area, respectively. Observe that only the area of $\ptl E$ inside $\Om$ is taken into account, so that $\ptl E\cap\ptl\Om$ \emph{has no contribution to} $P_f(E,\Om)$. By following a relaxation procedure as in Ambrosio~\cite{ambrosio-bv} and Miranda~\cite{miranda-bv}, we can define the weighted perimeter $P_f(E,\Om)$ of any Borel set $E\subeq\Om$, see \eqref{eq:wp2}. The resulting perimeter functional satisfies some of the basic properties of Euclidean perimeter, see  Section~\ref{subsec:perimeter}. The reader interested in the theory of finite perimeter sets and functions of bounded variation in Euclidean domains with density or in more general metric measure spaces is referred to the papers of Bellettini, Bouchitt\'e and Fragal\`a \cite{bellettini}, Baldi \cite{baldi}, Ambrosio~\cite{ambrosio-bv}, and Miranda~\cite{miranda-bv}.

Once we have suitable notions of volume and perimeter, we can investigate the \emph{isoperimetric} or \emph{partitioning problem} in $\Om$, which seeks sets $E\sub\Om$ with the least possible weighted perimeter in $\Om$ among those enclosing the same weighted volume. When such a set exists then it is called a \emph{weighted isoperimetric region} or a \emph{weighted minimizer} in $\Om$. 

The study of isoperimetric problems in Euclidean open sets with density has increased in the last years. However, in spite of the last advances, the characterization of the solutions has been achieved only for some densities having a special form or a nice behaviour with respect to a certain subgroup of diffeomorphisms. In particular, radial and homogeneous densities are being a focus of attention, see the related works \cite{lcdc}, \cite{howe}, \cite{morgan-pratelli}, \cite{figalli}, \cite{chambers}, \cite{cabre3}, \cite{homostable}, \cite{milman-rotem}, and the references therein.

For the Gaussian measure $\ga_c$ the partitioning problem has been considered inside domains with simple geometric properties. On the one hand, Lee \cite{gauss-halfspaces} employed approximation as in the first proofs of the Gaussian isoperimetric inequality to state that, in a half-space $\Om\sub\rrn$, the intersections with $\Om$ of half-spaces perpendicular to $\ptl\Om$ are weighted minimizers for any volume. As we pointed out before, this approach does not provide uniqueness of weighted minimizers. Later on, Adams, Corwin, Davis, Lee and Visocchi~\cite{gauss-sectors} derived some properties of weighted isoperimetric sets inside planar sectors with vertex at the origin, and showed that the half-spaces perpendicular to $\ptl\Om$ are the unique weighted minimizers for a half-space $\Om\sub\rrn$ \emph{with linear boundary}. Since the Gaussian density is invariant under linear isometries of $\rrn$ the proof follows by reflection across $\ptl\Om$ and the characterization of equality cases in the Gaussian isoperimetric inequality given in~\cite{ck}. Unfortunately, this argument does not hold if $0\notin\ptl\Om$, and so the uniqueness of weighted minimizers for an arbitrary half-space $\Om$ remained open. 

On the other hand, Lee~\cite{gauss-halfspaces} studied the partitioning problem when $\Om$ is a horizontal strip symmetric with respect to the $x$-axis in the Gauss plane. In particular, some partial results led to the conjecture \cite[Conj.~5]{gauss-halfspaces}: the weighted minimizers in $\Om$ must be bounded by curves different from line segments and meeting the two components of $\ptl\Om$. However, we must point out that the perimeter-decreasing rearrangement used in \cite[Prop.~5.3]{gauss-halfspaces} to discard isoperimetric curves meeting a vertical segment two or more times leaves invariant a region in $\Om$ bounded by such segments. In particular, this kind of region cannot be excluded as a weighted minimizer in $\Om$. Indeed, inside Gaussian slabs of any dimension, \emph{half-spaces perpendicular to the boundary always provide weighted isoperimetric regions}. As E.~Milman and F.~Barthe explained to us, this can be deduced from the Gaussian isoperimetric inequality by a variety of ways (we give the details later in a more general context). To the best of our knowledge, the uniqueness of weighted minimizers in this setting was still an open question. 

The partitioning problem in a Euclidean half-space endowed with a modification of the Gaussian density was considered by Brock, Chiacchio and Mercaldo in \cite{bcm}. These authors combine an optimal transport argument with the Gaussian isoperimetric inequality to establish that, in the coordinate half-space $\Om:=\rr^n\times\rr^+$ with product density $f(z,t):=t^m\exp(-(|z|^2+t^2)/2)$, where $m\geq 0$, the intersections with $\Om$ of half-spaces perpendicular to $\ptl\Om$ uniquely minimize the weighted perimeter for any weighted volume. Note that the density $f$ can be expressed as $\exp(m\log t-|p|^2/2)$, where $p=(z,t)$. In particular, $f$ is a log-concave perturbation of the Gaussian measure as in \eqref{eq:miau}, with perturbation term depending only on the coordinate function $t$. 

\subsection{The setting. Results and proofs for half-spaces and slabs}
\noindent

Motivated by the previous works, in this paper we investigate the partitioning problem in a more general setting that we now describe. For an open set $\Om\subeq\rrn$, we consider one-dimensional perturbations of the Gaussian density $\ga_c$ depending only on the signed distance from a fixed linear hyperplane. Since $\ga_c$ is invariant under linear isometries of $\rrn$ there is no loss of generality in assuming that the hyperplane is the horizontal one of equation $x_{n+1}=0$. Hence we are interested in product densities of the form
\begin{equation}
\label{eq:mefisto}
f(p):=\exp\big(\omega(\pi(p))-c|p|^2\big)=\exp(-c|z|^2)\,\exp(\omega(t)-ct^2), \quad p=(z,t),
\end{equation}
where $\pi:\rrn\to\rr$ is the projection onto the vertical axis, $\omega$ is a continuous function on $J_\Om:=\pi(\Om)$, and $c$ is a positive constant. Equation~\ref{eq:mefisto} defines a very large family of Euclidean densities which includes $\ga_c$ and the ones studied in the work of Brock, Chiacchio and Mercaldo~\cite{bcm}.

In Section~\ref{subsec:pgd} we obtain some basic properties for a density $f$ as in \eqref{eq:mefisto}. A straightforward computation shows that the perturbation term $\omega(\pi(p))$ is concave as a function of $p\in\Om$ if and only if $\omega(t)$ is a concave function of $t\in J_\Om$. Thus, the concavity of $\omega$ implies that $f$ is a log-concave perturbation as in \eqref{eq:miau} of the Gaussian measure $\ga_c$. Moreover, as we see in Lemma~\ref{lem:finitevolume}, this entails finite weighted volume for $\Om$ and finite weighted perimeter for any half-space intersected with $\Om$. Hence, from a well-known result that we recall in Theorem~\ref{th:exist}, we deduce existence of weighted isoperimetric regions in $\Om$ for any weighted volume. By these reasons (and some more that we will explain later), we are naturally lead to assume concavity of $\omega$ for densities as in \eqref{eq:mefisto}.

In the previous situation, the first important result is the following: inside an open half-space or slab $\Om:=\rr^n\times (a,b)$ with density $f$ as in \eqref{eq:mefisto} and $\omega$ concave, \emph{half-spaces perpendicular to $\ptl\Om$ always provide weighted minimizers}. As we recently heard from E.~Milman and F.~Barthe, when $f$ is a product of probability densities the associated isoperimetric profile $I_{\Om,f}$ defined in \eqref{eq:isopro} coincides with the Gaussian isoperimetric profile $I_{\ga}$. The comparison $I_{\Om,f}\geq I_\ga$ is a consequence of the L\'evy-Gromov inequality previously mentioned. Moreover, it can be deduced also from tensorization properties as in Barthe and Maurey~\cite[Remark after Prop.~5]{bm}, symmetrization with respect to a model measure as in Ros~\cite[Thms.~22 and 23]{rosisoperimetric}, or optimal transport methods as in Milman~\cite[Thm.~2.2]{milman-spectral}. On the other hand, by inspecting a half-space $H$ perpendicular to $\ptl\Om$ and using the Gaussian isoperimetric inequality, we conclude that $I_{\Om,f}=I_{\ga}$ and that $H$ is a weighted minimizer.

Our main aim in this work is to employ variational arguments, which are independent on the Gaussian isoperimetric inequality, to provide the characterization of second order minima of the weighted perimeter for fixed weighted volume, and the uniqueness of weighted isoperimetric regions when $\omega$ is concave and smooth. More precisely, one of our main results is the following (Theorem~\ref{th:isoperimetric}):
\begin{quotation}
\emph{Let $\Om:=\rr^n\times (a,b)$ be an open half-space or slab with density as in \eqref{eq:mefisto}, where $\omega$ is smooth and concave on the closure of $(a,b)$. Then, the intersections with $\Om$ of half-spaces perpendicular to $\ptl\Om$ uniquely minimize the weighted perimeter in $\Om$ for fixed weighted volume.}
\end{quotation}
In the particular case of the Gaussian measure $\ga_c$, our theorem gives uniqueness of weighted minimizers in arbitrary half-spaces and slabs, thus improving the aforementioned results in \cite{gauss-halfspaces} and \cite{gauss-sectors}. 

Unlike previous approaches, our methods do not heavily rely on the Gaussian isoperimetric inequality (indeed, it follows from our techniques, as we explain later in this Introduction). The proof of Theorem~\ref{th:isoperimetric} goes as follows. The smoothness hypothesis on $\omega$  allows us to invoke some known results from Geometric Measure Theory to obtain in Section~\ref{sec:isoperimetric} existence of weighted isoperimetric regions with nice interior boundaries. In Section~\ref{sec:stable} we use variational tools to study \emph{stable sets}, i.e., second order minima of the weighted perimeter for deformations preserving the weighted volume. We are able to classify these sets under some regularity conditions, by showing that they are all half-spaces parallel or perpendicular to $\ptl\Om$. Finally, in Section~\ref{sec:main} we utilize integration of second order differential inequalities to show that perpendicular half-spaces are isoperimetrically better than the parallel ones. 

Now, we shall explain in more detail the different ingredients of the proof and other interesting results for general densities on Euclidean open sets.

Let $\Om$ be an open subset of $\rrn$ with density $f=e^\psi$. In general, the existence of weighted minimizers in $\Om$ is a non-trivial question, see for instance \cite{rcbm}, \cite{morgan-pratelli} and \cite{cvm}. However, if the weighted volume of $\Om$ is finite, then we can ensure that there are weighted isoperimetric regions in $\Om$ of any weighted volume (Theorem~\ref{th:exist}). Moreover, by the regularity results in \cite{go-ma-ta}, \cite{gruter} and \cite{morgan-reg}, if $\Om$ has smooth boundary and $f\in C^\infty(\barOm)$ then, for any weighted minimizer $E$, the interior boundary $\overline{\ptl E\cap\Om}$ is a disjoint union $\Sg\cup\Sg_0$, where $\Sg$ is a smooth embedded hypersurface, and $\Sg_0$ is a closed set of singularities with Hausdorff dimension less than or equal to $n-7$ (Theorem~\ref{th:reg}). As we pointed out before, for a density $f$ as in \eqref{eq:mefisto} with $\omega$ smooth and concave, the weighted volume of any smooth open set $\Om$ is finite (Lemma~\ref{lem:finitevolume}), and so weighted minimizers with sufficiently regular interior boundaries exist in $\Om$. 

On the other hand, the Gaussian measure shows that the interior boundary of a minimizer need not be bounded, see some related results in \cite{rcbm}, \cite{morgan-pratelli} and \cite{pratelli-cinti}. This lack of compactness leads us to study the more general condition of \emph{null weighted capacity}. The capacity of a compact set in a Riemannian manifold was introduced by Choquet, and it physically represents the total electrical charge that the set can hold while it maintains a certain potential energy. The reader interested in the theory of capacities and its wide range of applications in isoperimetric problems, harmonic analysis and Brownian motion is referred to the surveys of Grigor'yan~\cite{grigorian} and Troyanov~\cite{troyanov}. In a very recent work, Grigor'yan and Masamune~\cite{gri-masa} have extended the notion of capacity and obtained parabolicity results for weighted Riemannian manifolds.

In our setting, a hypersurface $\Sg$ has \emph{null weighted capacity} if the quantity $\text{Cap}_f(K)$ defined in \eqref{eq:capacity} vanishes for any compact set $K\subeq\Sg$. It is clear that a compact hypersurface $\Sg$ satisfies $\text{Cap}_f(\Sg)=0$, but the reverse statement is not true. This is illustrated in Example~\ref{ex:finite}, where it is shown that $\text{Cap}_f(\Sg)=0$ for any complete hypersurface $\Sg$ of finite weighted area. Hypersurfaces of null weighted capacity play a crucial role in our classification of stable sets in Section~\ref{sec:stable} due to an integral \emph{stability inequality} valid for functions that need not vanish in $\Sg$, see Proposition~\ref{prop:extended}. As we aim at studying the isoperimetric problem by means of the stability condition, it is then important to establish a relation between weighted minimizers and hypersurfaces of null weighted capacity. This is done in Theorem~\ref{th:nullcap}, where we prove that, for an arbitrary smooth bounded density $f$ on a smooth open set $\Om\subeq\rrn$, the regular part $\Sg$ of the interior boundary $\overline{\ptl E\cap\Om}$ of a weighted minimizer $E$ is a hypersurface of null weighted capacity. The main difficulty in proving this theorem is the possible presence in high dimensions of a \emph{noncompact} singular set $\Sg_0$. However, we have been able to adapt the arguments given for constant densities and compact interior boundaries by Sternberg and Zumbrun~\cite{sz}. This requires uniform estimates for the perimeter of a weighted minimizer inside Euclidean balls (Proposition~\ref{prop:growth}) and the fact that the $(n-2)$-dimensional Hausdorff measure of $\Sg_0$ vanishes.

We now turn to describe our stability results in Euclidean domains with density. Let $\Om\subeq\rrn$ be a smooth open set with a density $f=e^\psi$ smooth on $\barOm$. Motivated by the regularity properties of a weighted minimizer in Theorem~\ref{th:reg} and the null weighted capacity property of interior boundaries in Theorem~\ref{th:nullcap}, we consider open sets $E\sub\Om$ with finite weighted perimeter and such that $\overline{\ptl E\cap\Om}=\Sg\cup\Sg_0$, where $\Sg$ is a smooth embedded hypersurface with $\text{Cap}_f(\Sg)=0$, and $\Sg_0$ is a closed singular set with vanishing weighted area. If $\Sg$ has non-empty boundary then we assume $\ptl\Sg=\Sg\cap\ptl\Om$. We say that $E$ is \emph{weighted stable} if it is a critical point with non-negative second derivative for the weighted perimeter functional under compactly supported variations moving $\ptl\Om$ along $\ptl\Om$ and preserving the weighted volume. Thus, if $E$ is weighted stable, then $\Sg$ is an \emph{$f$-stable free boundary hypersurface} as defined by Castro and the author in \cite{castro-rosales} (for constant densities this is just the classical notion of stable constant mean curvature hypersurface with free boundary). As a consequence, we can apply the formulas in \cite{castro-rosales} for the first and second derivatives of weighted area and volume in order to deduce some variational properties of a weighted stable set. 

On the one hand, the hypersurface $\Sg$ has constant \emph{$f$-mean curvature} and meets $\ptl\Om$ orthogonally along $\ptl\Sg$. The \emph{$f$-mean curvature} of $\Sg$ is the function $H_f$ in \eqref{eq:fmc} previously introduced by Gromov~\cite{gromov-GAFA} when computing the first derivative of the weighted area. On the other hand, the associated \emph{$f$-index form} of $\Sg$ defined in \eqref{eq:index1} is nonnegative for smooth functions having compact support on $\Sg$ and mean zero with respect to the weighted element of area. Note that the $f$-index form involves the extrinsic geometry of $\Sg$, the second fundamental form of $\ptl\Om$ and the \emph{Bakry-\'Emery-Ricci curvature} $\ric_f$ in \eqref{eq:fricci}. The $2$-tensor $\ric_f$ was first introduced by Lichnerowicz \cite{lich1}, \cite{lich2}, and later generalized by Bakry and \'Emery \cite{be} in the framework of diffusion generators. In particular, it is easy to observe that the stability inequality becomes more restrictive provided the ambient set $\Om$ is convex and $\ric_f\geq 0$. By assuming both hypotheses we deduce in Proposition~\ref{prop:extended} \emph{more general stability inequalities} for mean zero functions satisfying certain integrability conditions. This allows us to deform a given stable set by means of infinitesimal variations \emph{that could move the singular set $\Sg_0$}. The proof of Proposition~\ref{prop:extended} relies on the null weighted capacity property, which permits to extend the approximation arguments employed by Ritor\'e and the author for constant densities in convex solid cones, see  \cite{cones}

Coming back to the case where $\Om=\rr^n\times (a,b)$ and $f$ is a density as in \eqref{eq:mefisto} with $\omega$ smooth on the closure of $(a,b)$, we analyze in Lemma~\ref{lem:half-spaces} when a half-space $E$ intersected with $\Om$ is a weighted stable set. We get that a necessary condition for $E$ to be weighted stable is that $\ptl E$ is either parallel or perpendicular to $\ptl\Om$. Furthermore, the spectral gap inequality in Gauss space yields that half-spaces parallel to $\ptl\Om$ are weighted stable if and only if $\omega$ is convex. We must remark that the weighted stability of half-spaces for Euclidean product measures was also studied by Barthe, Bianchini and Colesanti~\cite{chiara}, and Doan~\cite{calibrations}. 

From the previous discussion we conclude that half-spaces parallel to $\ptl\Om$ can be discarded as weighted minimizers if $\omega$ is strictly concave. Moreover, the concavity of $\omega$ also implies $\ric_f\geq 2c>0$, which allows to apply the generalized stability inequalities in Proposition~\ref{prop:extended}. Motivated by all this, we are led to study in more detail the stability condition when $\omega$ is a concave function. As a consequence of our analysis, in Theorem~\ref{th:stable} we establish the following classification, which is one of our main results:

\begin{quotation}
\emph{Let $\Om:=\rr^n\times (a,b)$ be an open half-space or slab with density as in \eqref{eq:mefisto}, where $\omega$ is smooth and concave on the closure of $(a,b)$. Then, a weighted stable set of finite weighted perimeter and such that the regular part of the interior boundary has null weighted capacity is the intersection with $\Om$ of a half-space parallel or perpendicular to $\ptl\Om$.}
\end{quotation}

The proof of this theorem employs the generalized stability inequalities in Proposition~\ref{prop:extended} with suitable deformations of $E$. More precisely, the fact that $\ptl\Om$ is totally geodesic leads us to consider equidistant sets translated along a fixed direction $\eta$ of $\ptl\Om$ to keep the weighted volume constant. From an analytical point of view we show in Lemma~\ref{lem:test} that these deformations have associated test functions of the form $u:=\alpha+h$, where $\alpha$ is a real constant and $h$ is the normal component of $\eta$. After evaluating the $f$-index form over $u$, it turns out that this kind of variation decreases the weighted perimeter unless the interior boundary of $E$ is a hyperplane intersected with $\Om$. From here, it is not difficult to conclude that this hyperplane must be parallel or perpendicular to $\ptl\Om$, which completes the proof.  We must remark that similar test functions already appeared in previous stability results, see for example Sternberg and Zumbrun~\cite{sz2}, and Barbosa, do Carmo and Eschenburg~\cite{bdce}. More recently, McGonagle and Ross~\cite{stable-Gauss} have used the same functions to describe smooth, complete, orientable hypersurfaces of constant $f$-mean curvature and finite index in $\rrn$ with Gaussian density $\ga_c$.

Our stability theorem contains the classification of weighted stable sets in Gaussian half-spaces and slabs allowing the presence of singularities, see Corollary~\ref{cor:stableGausshalf}. Indeed, from the techniques of the proof we can also deduce characterization results for smooth, complete, orientable $f$-stable hypersurfaces with finite weighted area and free boundary in a half-space or slab, see Corollary~\ref{cor:stableGauss}. We must emphasize that the \emph{finite area hypothesis} is very restrictive in $\rrn$ with constant density since it enforces a complete constant mean curvature hypersurface to be compact. However, for general densities this is no longer true, as it is shown for example by a Gaussian hyperplane. More recent results for free boundary $f$-stable hypersurfaces are found in \cite{homostable} and \cite{castro-rosales}. On the other hand, complete $f$-minimal hypersurfaces with non-negative second derivative of the weighted area for any variation have been intensively studied, see the introduction of \cite{castro-rosales} for a very complete list of references. 

Once we have ensured existence of isoperimetric minimizers and described the stable candidates, we only have to compare their weighted perimeter for fixed weighted volume. This is done in Proposition~\ref{prop:comparison}, where we finally show that half-spaces perpendicular to $\ptl\Om$ are better than half-spaces parallel to $\ptl\Om$. The key ingredient of the proof is a second order differential inequality, which is satisfied by the relative profile associated to the family of half-spaces parallel to $\ptl\Om$. Since this inequality becomes an equality for half-spaces perpendicular to $\ptl\Om$, the desired comparison follows from a classical integration argument. 
As any two half-spaces perpendicular to $\ptl\Om$ of fixed weighted volume have the same weighted perimeter, our uniqueness result for weighted minimizers is the best that can be expected in this setting. We must also mention that the integration of differential inequalities was already employed in several isoperimetric comparisons, see for instance Morgan and Johnson~\cite{morgan-johnson}, Bayle~\cite[Ch.~3]{bayle-thesis}, \cite{bayle-paper}, and Bayle and the author~\cite{bayle-rosales}.

\subsection{Results for $\Om=\rrn$}
\noindent

The methods employed to prove Theorems~\ref{th:isoperimetric} and \ref{th:stable} provide also characterization results for weighted stable and isoperimetric sets in $\rrn$ with a density $f$ as in \eqref{eq:mefisto}. More precisely, in Theorem~\ref{th:stableGausswhole} we establish that, if $\omega$ is smooth and concave, then any weighted stable set of finite weighted perimeter, null weighted capacity and small singular set is a half-space. Moreover, if $\omega$ is not an affine function, then the half-space must be horizontal or vertical. From this result we deduce in Theorem~\ref{th:isoperimetric2} that, if $\omega$ is concave but not affine, then vertical half-spaces are the unique weighted minimizers. On the other hand, when $\omega$ is affine, it follows that (arbitrary) half-spaces uniquely minimize the weighted perimeter for fixed weighted volume. 

In the particular case of the Gaussian density $\ga_c$ on $\rrn$ our stability theorem implies that any weighted stable set $E$ in the conditions defined above must be a half-space. When the set $E$ has smooth boundary this also follows from a recent result of McGonagle and Ross~\cite{stable-Gauss} showing that the hyperplanes are the unique smooth, complete and orientable $f$-stable hypersurfaces in the Gauss space. Unfortunately, the solution to the isoperimetric problem for $\ga_c$ cannot be deduced from their theorem, since weighted minimizers need not be smooth in high dimensions. However, our stability result allows the presence of a small singular set in such a way that it can be applied to any weighted isoperimetric set. As a consequence our techniques provide a new proof, of possible independent interest, of the Gaussian isoperimetric inequality, leading at once to the characterization of equality cases. 

Recently F.~Morgan observed that the techniques in this paper may be employed to produce stability and isoperimetric results in $\rr^2$ with a Riemannian metric depending only on some of the two coordinates (maybe decreasing or concave), and endowed with unit or Gaussian density. For the moment, the author has no progress about this problem. 

\subsection{The non-smooth case}
\noindent

At this point, it is worth to recall that our main results in Theorems~\ref{th:isoperimetric} and \ref{th:stable} assume smoothness and concavity in the closure of $(a,b)$ of the perturbation term $\omega$ in \eqref{eq:mefisto}. Indeed, in order to apply the variational approach followed in Section~\ref{sec:stable} to characterize weighted stable sets, we only need $\Sg$ to be a $C^3$ hypersurface. So, by the regularity results in \cite{morgan-reg}, it suffices to suppose $\omega\in C^{2,\alpha}$ in order to prove Theorem~\ref{th:isoperimetric}.

In Section~\ref{subsec:non-smooth} we discuss the isoperimetric question in a half-space or slab $\Om:=\rr^n\times (a,b)$ endowed with a density $f$ as in \eqref{eq:mefisto}, where $\omega$ is a concave function, possibly non-smooth on the closure of $(a,b)$. As we have already mentioned, a previous work in this direction is due to Brock, Chiacchio and Mercaldo~\cite{bcm}, who established that, in the half-space $\Om:=\rr^n\times\rr^+$ with density $f(z,t):=t^m\,\exp(-(|z|^2+t^2)/2)$, where $m\geq 0$, the half-spaces perpendicular to $\ptl\Om$ uniquely minimize the weighted perimeter for fixed weighted volume. For the proof, the authors employed a $1$-Lispchitz map $T:\rrn\to\Om$ equaling the identity on $\rr^n\times\{0\}$ and pushing the Gaussian measure forward the weighted volume associated to $f$. From here, the isoperimetric comparison in $\Om$ follows from the Gaussian isoperimetric inequality. We must remark that this result in \cite{bcm} is not a consequence of our Theorem~\ref{th:isoperimetric} since the perturbation term $\omega(t):=m\log(t)$ is not smooth on $\rr^+_0$ when $m>0$. After circulating this manuscript we heard from E.~Milman that the previous approach also leads to the isoperimetric property of half-spaces perpendicular to $\ptl\Om$ for any concave function $\omega$. Moreover, by reproducing the arguments in \cite[Thm.~2.1]{bcm} and \cite[Thm.~2.2]{milman-spectral} we prove in Theorem~\ref{th:nuevo} that any weighted minimizer must be a half-space parallel or perpendicular to $\ptl\Om$. In addition, when $\omega$ is smooth on $(a,b)$ we can invoke our comparison result in Proposition~\ref{prop:comparison} to deduce that half-spaces perpendicular to $\ptl\Om$ are the unique weighted isoperimetric regions. This clearly extends the isoperimetric result in \cite{bcm}. Finally, Theorem~\ref{th:nuevo2} treats the case $\Om=\rrn$.

\subsection{Organization}
\noindent

The paper is organized as follows. In Section~\ref{sec:preliminaries} we introduce some preliminary material about Euclidean densities, hypersurfaces of null weighted capacity and weighted perimeter. In Section~\ref{sec:isoperimetric} we recall existence and regularity results for weighted minimizers, and show that the regular part of the boundary of a weighted minimizer is a hypersurface of null weighted capacity. Section~\ref{sec:stable} is devoted to stability inequalities and characterization results for weighted stable sets. Finally, Section~\ref{sec:main} contains our classification of weighted minimizers.  

\subsection*{Acknowledgements}
I would like to thank F.~Morgan for some comments and suggestions. I also thank E.~Milman and F.~Barthe, whose valuable comments about optimal transport and product measures helped me improve this work.

\section{Preliminaries}
\label{sec:preliminaries}
\setcounter{equation}{0}

In this section we introduce the notation and list some basic results that will be used throughout the paper.

\subsection{Euclidean densities}
\label{subsec:mwd}
\noindent

We consider Euclidean space $\rrn$ endowed with its standard Riemannian flat metric $\escpr{\cdot\,,\cdot}$. Given an open set $\Om\subeq\rrn$, by a \emph{density} on $\Om$ we mean a continuous positive function $f=e^\psi$ defined on $\Om$. This function is used to weight the Euclidean Hausdorff measures. In particular, for a Borel set $E\subeq\Om$, the \emph{weighted volume} and \emph{weighted area} of $E$ are respectively defined by 
\begin{align}
\label{eq:volume}
V_f(E)&:=\int_E dv_f=\int_E f\,dv,
\\
\label{eq:area}
A_f(E)&:=\int_{E}da_f=\int_{E}f\,da,
\end{align}
where $dv$ and $da$ are the Lebesgue measure and the $n$-dimensional Hausdorff measure in $\rrn$. We shall employ the notation $dl_f:=f\,dl$ for the $(n-1)$-dimensional weighted Hausdorff measure. We will say that $f$ has \emph{finite weighted volume} if $V_f(\Om)<+\infty$.

For a smooth density $f=e^\psi$ on $\barOm$, the associated \emph{Bakry-\'Emery-Ricci tensor} is the $2$-tensor
\begin{equation}
\label{eq:fricci}
\text{Ric}_f:=-\nabla^2\psi,
\end{equation}
where $\nabla^2$ is the Euclidean Hessian operator. Clearly, if the density is constant, then $\ric_f=0$. The \emph{Bakry-\'Emery-Ricci curvature} at a point $p\in\barOm$ in the direction of a unit vector $w\in\rrn$ is given by $(\ric_f)_p(w,w)$. If this curvature is always greater than or equal to a constant $2c$, then we write $\ric_f\geq 2c$.

\subsection{Perturbations of the Gaussian density}
\label{subsec:pgd}
\noindent

For any $c>0$, the associated \emph{Gaussian density} in $\rrn$ is defined by $\ga_c(p):=e^{-c |p|^2}$. Observe that $\ga_c$ is bounded and radial, i.e., it is constant over round spheres centered at the origin. Hence, $\ga_c$ is invariant under Euclidean linear isometries, which implies by \eqref{eq:volume}, \eqref{eq:area}, and the change of variables theorem, that the weighted volume and area of $E$ and $\phi(E)$ coincide for any Borel set $E$ and any linear isometry $\phi$. It is also known that $\ga_c$ has finite weighted volume, and that any hyperplane of $\rrn$ has finite weighted area. Moreover, by using \eqref{eq:fricci} we get that $\ga_c$ has constant Bakry-\'Emery-Ricci curvature $2c$. 

In this paper we are interested in certain Euclidean measures having a log-concave density with respect to $\ga_c$. Let $\pi:\rrn\to\rr$ be the projection onto the vertical axis. Given an open set $\Om\subeq\rrn$ we denote $J_\Om:=\pi(\Om)$. For any continuous function $\omega:J_\Om\to\rr$, and any $c>0$, we consider the following density on $\Om$
\begin{equation}
\label{eq:perturb}
f(p)=e^{\psi(p)}, \ \text{ where } \ \psi(p):=\omega(\pi(p))-c |p|^2.
\end{equation}
Clearly $\omega(\pi(p))$ is constant over horizontal hyperplanes. It is also clear that $f$ coincides with the restriction of $\ga_c$ to $\Om$ when $\omega=0$. If we suppose that $\omega$ is smooth on $\overline{J}_\Om$, and denote by $\omega'$ and $\omega''$ the first and second derivatives of $\omega(t)$ with respect to $t$, then a straightforward computation from \eqref{eq:perturb} and \eqref{eq:fricci} shows that
\begin{equation}
\label{eq:riccperturb}
(\nabla\psi)_p=\omega'(\pi(p))\,\ptl_t-2c p, \qquad 
(\ric_f)_p(w,w)=-\omega''(\pi(p))\,\escpr{\ptl_t,w}^2+2c |w|^2,
\end{equation}
for any point $p\in\overline{\Om}$ and any vector $w\in\rrn$. Note that $\ptl_t:=(\nabla\pi)_p=(0,0,\ldots, 1)$. 

From the second equality in \eqref{eq:riccperturb}, it follows that the perturbation term $\omega(\pi(p))$ in \eqref{eq:perturb} is a concave function on $\Om$ if and only if $\omega(t)$ is a concave function of $t\in J_\Om$.  In such a case the density $f$ has the same form as in  \eqref{eq:miau}, and this allows for instance to apply the L\'evy-Gromov type inequality mentioned in the Introduction. In this sense, the concavity of $\omega$ seems a natural hypothesis when we consider densities as in \eqref{eq:perturb}. Furthermore, the concavity property implies that $f$ is bounded from above by a density of Gaussian type. As an immediate consequence, we get the next result.

\begin{lemma}
\label{lem:finitevolume}
Let $\Om$ be an open set of $\rrn$ endowed with a density $f=e^\psi$ as in \eqref{eq:perturb}. If $\omega$ is concave, then $f$ is bounded and $\Om$ has finite weighted volume. Moreover, the intersection with $\Om$ of any hyperplane in $\rrn$ has finite weighted area.
\end{lemma}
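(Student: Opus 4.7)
The plan is to reduce the general case to a Gaussian-type comparison via the standard fact that a concave function lies below any of its supporting affine functions.

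First I would pick a point $t_0$ in the interior of $J_\Om$ and invoke the supporting line property for concave functions: there exist constants $A,B\in\rr$ (e.g.\ $B$ in the superdifferential of $\omega$ at $t_0$ and $A=\omega(t_0)-Bt_0$) such that
\[
\omega(t)\leq A+Bt \qquad \text{for every } t\in J_\Om.
\]
(If $J_\Om$ is bounded any supporting line works; if $J_\Om$ is unbounded, concavity forces such a supporting line to majorize $\omega$ globally.) Writing $p=(z,t)\in\rrn$ and plugging this into \eqref{eq:perturb} gives
\[
f(p)\leq e^{A}\,e^{-c|z|^2}\,e^{Bt-ct^2}
     = e^{A+B^2/(4c)}\,e^{-c|z|^2}\,e^{-c\,(t-B/(2c))^2}
     = C\,e^{-c|p-q_0|^2},
\]
where $C:=e^{A+B^2/(4c)}$ and $q_0:=(0,\ldots,0,B/(2c))\in\rrn$, after completing the square.

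From this pointwise bound the boundedness of $f$ is immediate, and
\[
V_f(\Om)=\int_\Om f\,dv\leq C\int_{\rrn} e^{-c|p-q_0|^2}\,dv(p)=C\,(\pi/c)^{(n+1)/2}<+\infty,
\]
by translation invariance of Lebesgue measure and the standard Gaussian integral. This settles the first two conclusions.

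For the last one, let $H\sub\rrn$ be an arbitrary hyperplane, let $q_0'$ denote the orthogonal projection of $q_0$ onto $H$, and let $d:=|q_0-q_0'|$. Decomposing $p-q_0$ into its components parallel and perpendicular to $H$ yields $|p-q_0|^2=|p-q_0'|^2+d^2$ for every $p\in H$, so
\[
A_f(H\cap\Om)\leq\int_{H} f\,da\leq C\,e^{-cd^2}\int_H e^{-c|p-q_0'|^2}\,da(p)
=C\,e^{-cd^2}\,(\pi/c)^{n/2}<+\infty,
\]
the last integral being the $n$-dimensional Gaussian integral computed on $H$ after identifying it (via an orthonormal frame centered at $q_0'$) with $\rr^n$. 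There is no serious obstacle here; the only subtle point is justifying the affine majorant uniformly on $J_\Om$, which is precisely the content of the supporting line property for concave functions on an interval.
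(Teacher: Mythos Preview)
Your argument is correct and is exactly what the paper has in mind: the paper does not give a detailed proof but states the lemma as an ``immediate consequence'' of the fact that concavity of $\omega$ forces $f$ to be bounded above by a density of Gaussian type, which is precisely your supporting-line bound $f(p)\leq C\,e^{-c|p-q_0|^2}$. Your proof simply spells out this reduction and the ensuing Gaussian integrals, so there is nothing to add.
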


\begin{example}
The previous lemma fails if $\omega$ is not assumed to be concave. For example, for $\omega(t):=2c t^2$ we can construct the planar density $f(x,y)=e^{c(y^2-x^2)}$, for which the weighted area of $\rr^2$ is not finite. Note also that any vertical line have infinite weighted length.
\end{example}

\subsection{Weighted divergence theorems}
\label{subsec:divlap}
\noindent

Let $\Om$ be an open set of $\rrn$ endowed with a density $f=e^\psi$ smooth on $\barOm$. For any smooth vector field $X$ on $\overline{\Om}$, the \emph{$f$-divergence} of $X$ is the function
\begin{equation}
\label{eq:divf}
\divv_f X:=\divv X+\escpr{\nabla\psi,X},
\end{equation}
where $\divv$ and $\nabla$ denote the Euclidean divergence of vector fields and the gradient of smooth functions, respectively. The $f$-divergence $\divv_f$ is the adjoint operator of $-\nabla$ for the $L^2$ norm associated to the weighted volume $dv_f$. By using the Gauss-Green theorem together with equality $\divv_f X\,dv_f=\divv(fX)\,dv$, we obtain
\begin{equation}
\label{eq:divdesi}
\int_E\divv_f X\,dv_f=-\int_{\ptl E\cap\Om}\escpr{X,N}\,da_f,
\end{equation}
for any open set $E\sub\Om$ such that $\ptl E\cap\Om$ is a smooth hypersurface, and any smooth vector field $X$ with compact support in $\Om$. In the previous formula $N$ denotes the inner unit normal along $\ptl E\cap\Om$. 

Let $\Sg$ be a smooth oriented hypersurface in $\overline{\Om}$, possibly with boundary $\ptl\Sg$. For any smooth vector field $X$ along $\Sg$, we define the \emph{f-divergence relative to} $\Sg$ of $X$ by
\[
\divv_{\Sg,f}X:=\divv_\Sg X+\escpr{\nabla\psi,X},
\]
where $\divv_\Sg$ is the Euclidean divergence relative to $\Sg$. If $N$ is a unit normal vector along $\Sg$, then the \emph{f-mean curvature} of $\Sg$ with respect to $N$ is the function
\begin{equation}
\label{eq:fmc}
H_f:=-\divv_{\Sg,f}N=nH-\escpr{\nabla\psi,N},
\end{equation}
where $H:=(-1/n)\divv_\Sg N$ is the Euclidean mean curvature of $\Sg$. By using the Riemannian divergence theorem it is not difficult to get
\begin{equation}
\label{eq:divthsup}
\int_\Sg\divv_{\Sg,f}X\,da_f=-\int_\Sg H_f\escpr{X,N}\,da_f-\int_{\ptl\Sg}\escpr{X,\nu}\,dl_f,
\end{equation}
for any smooth vector field $X$ with compact support on $\Sg$, see \cite[Lem.~2.2]{homostable} for details. Here we denote by $\nu$ the \emph{conormal vector}, i.e., the inner unit normal to $\ptl\Sg$ in $\Sg$. From now on, we understand that the integrals over $\ptl\Sg$ are all equal to zero provided $\ptl\Sg=\emptyset$.

Given a function $u\in C^\infty(\Sg)$, the \emph{f-Laplacian relative} to $\Sg$ of $u$ is defined by
\begin{equation}
\label{eq:deltaf}
\Delta_{\Sg,f}u:=\divv_{\Sg,f}(\nabla_\Sg u)=\Delta_\Sg u
+\escpr{\nabla_\Sg\psi,\nabla_\Sg u},
\end{equation} 
where $\nabla_\Sg$ is the gradient relative to $\Sg$. For this operator we have the following integration by parts formula, which is an immediate consequence of  \eqref{eq:divthsup}
\begin{equation}
\label{eq:ibp}
\int_\Sg u_1\,\Delta_{\Sg,f}u_2\,da_f=-\int_\Sg\escpr{\nabla_\Sg u_1,\nabla_\Sg u_2}\,da_f-\int_{\ptl\Sg}u_1\,\frac{\ptl u_2}{\ptl\nu}\,dl_f,
\end{equation}
where $u_1,u_2\in C^\infty_0(\Sg)$ and $\ptl u_2/\ptl\nu$ is the directional derivative of $u_2$ with respect to $\nu$. As usual, we denote by $C_0^\infty(\Sg)$ the set of smooth functions with compact support on $\Sg$. Note that, when $\ptl\Sg\neq\emptyset$, a function in $C^\infty_0(\Sg)$ need not vanish on $\ptl\Sg$.

\subsection{Hypersurfaces of null weighted capacity}
\label{subsec:capacity}
\noindent

Let $\Om$ be an open set of $\rrn$ endowed with a density $f=e^\psi$ smooth on $\barOm$. Given a smooth oriented hypersurface $\Sg\sub\overline{\Om}$, possibly with boundary, and a number $q\geq 1$, we denote by $L^q(\Sg,da_f)$ and $L^q(\ptl\Sg,dl_f)$ the corresponding spaces of integrable functions with respect to the weighted measures $da_f$ and $dl_f$. The \emph{weighted Sobolev space} $H^1(\Sg,da_f)$ is the set of functions $u\in L^2(\Sg,da_f)$ whose distributional gradient $\nabla_\Sg u$ satisfies $|\nabla_\Sg u|\in L^2(\Sg,da_f)$. The \emph{weighted Sobolev norm} of $u\in H^1(\Sg,da_f)$ is $\|u\|:=(\int_\Sg u^2\,da_f+\int_\Sg|\nabla_\Sg u|^2\,da_f)^{1/2}$. We will use the notation $H_0^1(\Sg,da_f)$ for the closure of $C^\infty_0(\Sg)$ with respect to this norm.  

Following Grigor'yan and Masamune \cite{gri-masa}, we define the \emph{weighted capacity} of a compact subset $K\subeq\Sg$ by means of equality
\begin{equation}
\label{eq:capacity}
\text{Cap}_f(K):=\inf\left\{\int_\Sg|\nabla_\Sg u|^2\,da_f\,;\,u\in H^1_0(\Sg,da_f),\,0\leq u\leq 1,\,u=1\text{ in }K\right\}.
\end{equation}
A standard approximation argument shows that we can replace $H^1_0(\Sg,da_f)$ with $C^\infty_0(\Sg)$ in the previous definition. Note also that the monotonicity property $\text{Cap}_f(K_1)\leq\text{Cap}_f(K_2)$ holds for two compact sets $K_1\subeq K_2$. Thus, it is natural to define
\[
\text{Cap}_f(D):=\sup\{\text{Cap}_f(K)\,;\,K\subeq D \text{ is a compact set}\},
\]
for any open set $D\subeq\Sg$. We say that $\Sg$ \emph{has null weighted capacity} if $\text{Cap}_f(\Sg)=0$, i.e., $\text{Cap}_f(K)=0$ for any compact set $K\subeq\Sg$. 
Our main interest in such a hypersurface comes from the next result, which follows from \eqref{eq:capacity} by taking a countable exhaustion of $\Sg$ by precompact open subsets.

\begin{lemma}
\label{lem:capacity}
Let $\Om$ be an open set of $\rrn$ endowed with a density $f=e^\psi$ smooth on $\barOm$. Consider a smooth oriented hypersurface $\Sg\sub\overline{\Om}$, possibly with boundary. If $\emph{Cap}_f(\Sg)=0$, then there is a sequence $\{\var_k\}_{k\in\nn}\sub C^\infty_0(\Sg)$ with:
\begin{itemize}
\item[(i)] $0\leq\var_k\leq 1$ for any $k\in\nn$, 
\item[(ii)] $\lim_{k\to\infty}\var_k(p)=1$ for any $p\in\Sg$, 
\item[(iii)] $\lim_{k\to\infty}\int_\Sg|\nabla_\Sg\var_k|^2\,da_f=0$.
\end{itemize}
\end{lemma}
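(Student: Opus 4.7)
The plan is to build the sequence $\{\var_k\}$ directly from the definition of capacity, applied to a compact exhaustion of $\Sg$.

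First I would choose a countable exhaustion $\Sg=\bigcup_{k\in\nn}U_k$ by precompact open subsets with $\overline{U_k}\subset U_{k+1}$. Such an exhaustion exists because $\Sg$, being a smooth manifold (possibly with boundary) sitting inside $\rrn$, is second countable and locally compact, hence $\sigma$-compact. Setting $K_k:=\overline{U_k}$ gives an increasing sequence of compact subsets of $\Sg$ whose union is $\Sg$.

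Next, the hypothesis $\text{Cap}_f(\Sg)=0$ means by definition that $\text{Cap}_f(K)=0$ for every compact $K\subeq\Sg$, and in particular $\text{Cap}_f(K_k)=0$ for each $k$. Invoking the remark made in the paper that $C^\infty_0(\Sg)$ may replace $H^1_0(\Sg,da_f)$ in \eqref{eq:capacity}, for each $k\in\nn$ I pick a test function $\var_k\in C^\infty_0(\Sg)$ satisfying $0\leq\var_k\leq 1$, $\var_k\equiv 1$ on $K_k$, and
\[
\int_\Sg|\nabla_\Sg\var_k|^2\,da_f<\frac{1}{k}.
\]
Now (i) and (iii) are immediate. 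For (ii), fix any $p\in\Sg$; since the $U_k$ exhaust $\Sg$ there is some $k_0\in\nn$ with $p\in U_k\subseteq K_k$ for all $k\geq k_0$, so $\var_k(p)=1$ eventually, giving $\lim_{k\to\infty}\var_k(p)=1$.

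There is essentially no serious obstacle; the only subtle point is the density claim used when replacing $H^1_0(\Sg,da_f)$ by $C^\infty_0(\Sg)$ in the definition while preserving both the two-sided bound $0\leq u\leq 1$ and the constraint $u\equiv 1$ on $K_k$. This is standard: starting from an $H^1_0$ competitor $u$ with small weighted Dirichlet energy, one first truncates by $\min(\max(u,0),1)$, which does not increase $\int_\Sg|\nabla_\Sg u|^2\,da_f$ since the gradient vanishes where $u$ is truncated, and then mollifies intrinsically on $\Sg$ at a scale small enough that the mollified function remains identically $1$ on $K_k$ and compactly supported in $\Sg$. Since this approximation is already explicitly noted in the paper right after \eqref{eq:capacity}, in the writeup I would only invoke it rather than reproduce it in detail.
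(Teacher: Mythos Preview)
Your proposal is correct and follows essentially the same approach as the paper, which simply states that the lemma ``follows from \eqref{eq:capacity} by taking a countable exhaustion of $\Sg$ by precompact open subsets.'' You have fleshed out exactly this argument, including the use of the remark (made right after \eqref{eq:capacity}) that $C^\infty_0(\Sg)$ may replace $H^1_0(\Sg,da_f)$ in the definition of capacity.
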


From the previous lemma we can generalize to hypersurfaces of null weighted capacity some properties and results valid for compact hypersurfaces. In the next result we extend the divergence theorem in \eqref{eq:divthsup} and the integration by parts formula in \eqref{eq:ibp} to vector fields and functions satisfying certain integrability conditions. The proof is obtained from Lemma~\ref{lem:capacity} by a simple approximation argument, see \cite[Lem.~4.4]{cones} for details. 

\begin{lemma}[Generalized divergence theorem and integration by parts formula]
\label{lem:ibpext}
Let $\Om$ be an open set of $\rrn$ endowed with a density $f=e^\psi$ smooth on $\barOm$. Consider a smooth oriented hypersurface $\Sg$, possibly with boundary, and such that $\emph{Cap}_f(\Sg)=0$. Then, for any smooth vector field $X$ on $\Sg$ satisfying
\begin{itemize}
\item[(i)] $|X|\in L^2(\Sg,da_f)$,
\item[(ii)] $\divv_{\Sg,f}X\in L^1(\Sg,da_f)$,
\item[(iii)] $H_f\escpr{X,N}\in L^1(\Sg,da_f)$,
\item[(iv)] $\escpr{X,\nu}\in L^1(\ptl\Sg,dl_f)$,
\end{itemize}
we have equality
\[
\int_\Sg\divv_{\Sg,f}X\,da_f=-\int_\Sg H_f\escpr{X,N}\,da_f-\int_{\ptl\Sg}\escpr{X,\nu}\,dl_f.
\]
As a consequence, for any two functions $u_1,u_2\in C^\infty(\Sg)$ such that
\begin{itemize}
\item[(i)] $u_1$ is bounded,
\item[(ii)] $|\nabla_\Sg u_i|\in L^2(\Sg,da_f)$, $i=1,2$,
\item[(iii)] $\Delta_{\Sg,f}u_2\in L^1(\Sg,da_f)$,
\item[(iv)] $\ptl u_2/\ptl\nu\in L^1(\ptl\Sg,dl_f)$,
\end{itemize}
we have the integration by parts formula
\[
\int_\Sg u_1\,\Delta_{\Sg,f}\,u_2\,da_f=-\int_\Sg\escpr{\nabla_\Sg u_1,\nabla_\Sg u_2}\,da_f-\int_{\ptl\Sg}u_1\,\frac{\ptl u_2}{\ptl\nu}\,dl_f.
\]
\end{lemma}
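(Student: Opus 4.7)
The plan is to reduce the claim to the already-established identities \eqref{eq:divthsup} and \eqref{eq:ibp} by truncating with the cutoff sequence $\{\var_k\}\sub C_0^\infty(\Sg)$ supplied by Lemma~\ref{lem:capacity}, and then to pass to the limit. For the divergence theorem, I would apply \eqref{eq:divthsup} to the compactly supported field $\var_k X$ and expand $\divv_{\Sg,f}(\var_k X)=\var_k\,\divv_{\Sg,f}X+\escpr{\nabla_\Sg\var_k,X}$ to obtain
\begin{equation*}
\int_\Sg\var_k\,\divv_{\Sg,f}X\,da_f+\int_\Sg\escpr{\nabla_\Sg\var_k,X}\,da_f
=-\int_\Sg\var_k H_f\escpr{X,N}\,da_f-\int_{\ptl\Sg}\var_k\escpr{X,\nu}\,dl_f.
\end{equation*}
Thanks to $0\leq\var_k\leq 1$ and $\var_k\to 1$ pointwise (properties (i)--(ii) of Lemma~\ref{lem:capacity}), the three terms involving $\var_k$ multiplied by $\divv_{\Sg,f}X$, $H_f\escpr{X,N}$ and $\escpr{X,\nu}$ converge to their unweighted counterparts by Lebesgue dominated convergence, using hypotheses (ii), (iii), (iv) on $X$ as integrable majorants.

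The heart of the argument is to show that the spurious term $\int_\Sg\escpr{\nabla_\Sg\var_k,X}\,da_f$ vanishes in the limit. By Cauchy--Schwarz,
\begin{equation*}
\left|\int_\Sg\escpr{\nabla_\Sg\var_k,X}\,da_f\right|\leq\left(\int_\Sg|\nabla_\Sg\var_k|^2\,da_f\right)^{1/2}\left(\int_\Sg|X|^2\,da_f\right)^{1/2},
\end{equation*}
and the right-hand side tends to $0$: the second factor is finite by hypothesis (i) on $X$, while the first factor tends to $0$ by property (iii) of Lemma~\ref{lem:capacity}. This establishes the generalized divergence identity.

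For the integration by parts formula, I would simply apply the generalized divergence theorem just proved to the vector field $X:=u_1\,\nabla_\Sg u_2$. One must check its four hypotheses: (i) holds because $u_1$ is bounded and $|\nabla_\Sg u_2|\in L^2(\Sg,da_f)$; (ii) follows from the identity $\divv_{\Sg,f}(u_1\nabla_\Sg u_2)=u_1\,\Delta_{\Sg,f}u_2+\escpr{\nabla_\Sg u_1,\nabla_\Sg u_2}$, where the first summand lies in $L^1(\Sg,da_f)$ by boundedness of $u_1$ together with hypothesis (iii) on $u_2$, and the second lies in $L^1(\Sg,da_f)$ by Cauchy--Schwarz from hypothesis (ii) on the gradients; (iii) is automatic since $\escpr{X,N}=u_1\,\escpr{\nabla_\Sg u_2,N}=0$ as $\nabla_\Sg u_2$ is tangent to $\Sg$; and (iv) follows because $\escpr{X,\nu}=u_1\,\ptl u_2/\ptl\nu$, which is integrable on $\ptl\Sg$ by boundedness of $u_1$ and hypothesis (iv) on $u_2$.

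The only genuine obstacle is the control of the cross-term $\int_\Sg\escpr{\nabla_\Sg\var_k,X}\,da_f$, and this is precisely the step that explains why the null weighted capacity assumption is the natural one: it furnishes test functions whose gradient $L^2$-norm is arbitrarily small, which couples exactly with the hypothesis $|X|\in L^2(\Sg,da_f)$ via Cauchy--Schwarz to suppress the boundary-at-infinity contribution that generally obstructs extensions of the divergence theorem beyond the compactly supported setting.
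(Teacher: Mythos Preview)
Your proposal is correct and follows precisely the approximation scheme the paper indicates: truncate with the cutoff sequence $\{\var_k\}$ from Lemma~\ref{lem:capacity}, apply the compactly supported identity \eqref{eq:divthsup}, and pass to the limit via dominated convergence and Cauchy--Schwarz (the paper does not spell out the proof, deferring instead to \cite[Lem.~4.4]{cones}, but your argument matches that reference). The derivation of the integration by parts formula by specializing to $X=u_1\nabla_\Sg u_2$ and checking the four hypotheses is likewise the intended route.
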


For a complete hypersurface, the null capacity property can be deduced from a suitable behaviour of the volume growth associated to metric balls centered at a fixed point, see Grigor'yan \cite{grigorian}. A very particular case of this situation is shown in the next example. 

\begin{example}[Complete hypersurfaces of finite area have null capacity]
\label{ex:finite}
Let $\Om$ be an open set of $\rrn$ endowed with a density $f=e^\psi$ smooth on $\barOm$. Consider a smooth, complete, oriented hypersurface $\Sg\sub\overline{\Om}$, possibly with boundary. Fix a point $p_0\in\Sg$ and denote by $D(p_0,r)$ the closed $r$-neighborhood of $p_0$ in $\Sg$ with respect to the intrinsic distance. The completeness of $\Sg$ implies the existence of a sequence of Lipschitz functions $\var_k:\Sg\to [0,1]$ such that $\var_k=1$ in $D(p_0,k/2)$, $\var_k=0$ in $\Sg-D(p_0,k)$ and $|\nabla_\Sg\var_k|\leq\alpha/k$ on $\Sg$ for some positive constant $\alpha$ which does not depend on $k$. Moreover, if $A_f(\Sg)<+\infty$, then we have
\[
\int_\Sg|\nabla_\Sg\var_k|^2\,da_f\leq\alpha^2 \ \frac{A_f(\Sg\cap D(p_0,k))}{k^2}\leq\alpha^2\,\,\frac{A_f(\Sg)}{k^2},
\]
which tends to zero when $k\to\infty$. Finally, given a compact set $K\subeq\Sg$, there is $k_0\in\nn$ such that $\var_k=1$ in $K$ for any $k \geq k_0$. Hence $\text{Cap}_f(K)=0$ by \eqref{eq:capacity}.
\end{example}

\subsection{Sets of finite weighted perimeter}
\label{subsec:perimeter}
\noindent

Let $\Om$ be an open set of $\rrn$ endowed with a density $f=e^\psi$ smooth on $\barOm$. The notion of $f$-divergence in \eqref{eq:divf} allows us to introduce the weighted perimeter of sets by following the classical approach by Caccioppoli and De Giorgi. More precisely, for any Borel set $E\subeq\Om$, the \emph{weighted perimeter of $E$ in $\Om$} is given by
\begin{equation}
\label{eq:wp}
P_f(E,\Om):=\sup\left\{\int_E\divv_f X\,dv_f\,;\,|X|\leq 1\right\},
\end{equation}
where $X$ ranges over smooth vector fields with compact support in $\Om$. Clearly $P_f(E,\Om)$ does not change by sets of volume zero. Thus, we can always assume that $E$ satisfies $0<V_f(E\cap B)<V_f(B)$ for any Euclidean open ball $B$ centered at $\ptl E$, see \cite[Prop.~3.1]{giusti}. 

If $E$ is an open subset of $\Om$ and $\ptl E\cap\Om$ is smooth, then $P_f(E,U)=A_f(\ptl E\cap U)$ for any open set $U\subeq\Om$. This is an immediate consequence of \eqref{eq:wp} and the Gauss-Green formula in \eqref{eq:divdesi}. Indeed, by reasoning as in \cite[Ex.~12.7]{maggi}, we deduce that the previous equality also holds for a set $E$ with almost smooth interior boundary.  In particular, if $\Sg$ is the regular part of $\overline{\ptl E\cap\Om}$ and we assume $\ptl\Sg=\Sg\cap\ptl\Om$, then $P_f(E,\Om)=A_f(\Sg)$ since $A_f(\ptl\Sg)=0$. This shows that $\ptl E\cap\ptl\Om$ does not contribute to $P_f(E,\Om)$. 

We say that $E$ has \emph{finite weighted perimeter in} $\Om$ if $P_f(E,\Om)<+\infty$. We say that $E$ is a \emph{weighted Caccioppoli set in $\Om$} if $P_f(E,U)<+\infty$ for any open set $U\sub\sub\Om$. If $E$ has finite weighted perimeter in $\Om$, then the set function mapping any open set $U\subeq\Om$ to $P_f(E,U)$ is the restriction of a finite Borel measure $m_f$ on $\Om$ (the \emph{weighted perimeter measure}) given by $m_f(Q)=P_f(E,Q):=\inf\{P_f(E,U)\,;\,Q\subeq U\}$, where $Q\subeq\Om$, and $U$ ranges over open subsets of $\Om$. Moreover, by using the Riesz representation theorem as in Thm.~1 of \cite[Sect.~5.1]{evans-gariepy}, we can find a \emph{generalized inner unit normal} $N_f:\Om\to\rrn$ such that the Gauss-Green formula
\begin{equation}
\label{eq:gg2}
\int_E\divv_f X\,dv_f=-\int_{\Om}\escpr{X,N_f}\,dm_f
\end{equation}
holds for any smooth vector field $X$ with compact support in $\Om$. Of course, for an open set with almost smooth interior boundary, the inner unit normal $N$ along the hypersurface $\Sg$ coincides with $N_f$. From \eqref{eq:gg2} it is easy to check that $E$ is a weighted Caccioppoli set in $\Om$ if and only if $E$ is a Caccioppoli set (in Euclidean sense) in $\Om$. 

In general, most of the basic properties of finite perimeter sets in $\rrn$ are extended to sets of finite weighted perimeter. For example, the next lemma can be derived as the localization result stated in \cite[Prop.~3.56]{afp} and \cite[p.~196]{evans-gariepy} for constant densities, with the difference that our estimates involve the weighted perimeter \emph{off of a Euclidean ball}.

\begin{lemma}
\label{lem:peresti}
Let $\Om$ be an open set of $\rrn$ endowed with a density $f=e^\psi$ smooth on $\barOm$. Given a Borel set $E\subeq\Om$ of finite weighted perimeter and a point $p_0\in\rrn$, we have
\[
P_f(E-B(p_0,r),\Om)\leq P_f(E,\Om-B(p_0,r))+A_f(E\cap \ptl B(p_0,r))
\]
for almost every $r>0$. Here $B(p_0,r)$ is the Euclidean open ball of radius $r$ centered at $p_0$.
\end{lemma}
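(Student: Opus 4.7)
The plan is to rely on the identification, valid for any Caccioppoli set $F\sub\rrn$ and any open $U\sub\rrn$,
\[
P_f(F,U)=\int_{\ptl^* F\cap U} f\,da,
\]
where $\ptl^* F$ denotes the De Giorgi reduced boundary. This formula follows from the pointwise identity $\divv_f X\,dv_f=\divv(fX)\,dv$, the classical Gauss-Green theorem $\int_F\divv(fX)\,dv=\int_{\ptl^* F}\escpr{X,\nu_F}\,f\,da$ for $X\in C^\infty_c(U;\rrn)$, and the variational characterization~\eqref{eq:wp} of the weighted perimeter. The discussion in Section~\ref{subsec:perimeter} already shows that $E$ is a Caccioppoli set in $\Om$ whenever $P_f(E,\Om)<+\infty$, so the identification applies.

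With this in hand, the problem reduces to estimating the weighted $n$-dimensional Hausdorff measure of $\ptl^*(E-B(p_0,r))\cap\Om$. For almost every $r>0$, the standard BV slicing theorem (compare \cite[Prop.~3.56]{afp} and \cite[p.~196]{evans-gariepy}) applied to the chain-rule identity $\chi_{E-B(p_0,r)}=\chi_E\,(1-\chi_{B(p_0,r)})$ yields
\[
\ptl^*(E-B(p_0,r))\cap\Om\sub\bigl(\ptl^* E\cap(\Om-\overline{B(p_0,r)})\bigr)\cup\bigl(\ptl B(p_0,r)\cap E^{(1)}\cap\Om\bigr)\cup N_r,
\]
where $E^{(1)}$ is the set of Lebesgue density-one points of $E$ and $N_r$ is $da$-negligible. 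The restriction to almost every $r$ arises because for a.e.\ $r$ the sphere $\ptl B(p_0,r)$ carries no singular part of the vector measure $D\chi_E$, a Fubini/coarea consequence of the Lipschitz character of $p\mapsto|p-p_0|$.

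Integrating $f\,da$ over the two pieces of the decomposition, together with the monotonicity of the perimeter measure $Q\mapsto P_f(E,Q)$ recorded in Section~\ref{subsec:perimeter}, yields
\[
P_f(E-B(p_0,r),\Om)\leq P_f(E,\Om-B(p_0,r))+A_f(E^{(1)}\cap\ptl B(p_0,r)\cap\Om).
\]
Since $E$ and $E^{(1)}$ coincide outside a Lebesgue-negligible set, a second Fubini argument applied to the radial coordinate identifies the last term with $A_f(E\cap\ptl B(p_0,r))$ for almost every $r$, completing the proof. I expect the main obstacle to be the slicing decomposition on spheres of a.e.\ radius; this is a routine but non-trivial adaptation of the hyperplane slicing statement in \cite[Prop.~3.56]{afp} to the Lipschitz distance function $|p-p_0|$, and it is the only place where the qualifier ``almost every $r$'' is used.
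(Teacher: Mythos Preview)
Your argument is correct and follows exactly the route the paper indicates: the paper does not give a detailed proof but simply states that the lemma ``can be derived as the localization result stated in \cite[Prop.~3.56]{afp} and \cite[p.~196]{evans-gariepy} for constant densities,'' which is precisely the slicing/decomposition mechanism you spell out via the reduced boundary representation $P_f(F,U)=\int_{\ptl^*F\cap U}f\,da$. In effect you have supplied the details the paper omits, using the same references and the same idea.
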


Another result that we need is a regularity property for sets of finite weighted perimeter. The proof is a direct consequence of \cite[Thm.~4.11]{giusti} and the fact that weighted Caccioppoli sets coincide with unweighted Caccioppoli sets.

\begin{lemma}
\label{lem:giusti}
Let $\Om$ be an open set of $\rrn$ endowed with a density $f=e^\psi$ smooth on $\barOm$. Suppose that $E\subeq\Om$ is a Borel set of finite weighted perimeter in $\Om$ such that the generalized inner unit normal $N_f$ of $E$ exists and it is continuous on $\ptl E\cap\Om$. Then $\ptl E\cap\Om$ is a $C^1$ hypersurface.
\end{lemma}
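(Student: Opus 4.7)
The plan is to reduce the statement to the analogous unweighted result, namely Theorem~4.11 in Giusti's book, by exploiting the smoothness and positivity of $f$ to transfer the continuity hypothesis from the weighted inner normal $N_f$ to the classical De~Giorgi inner normal $\nu_E$.

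First, I would make precise the identification of the two perimeter measures. The remark preceding the lemma already gives that $E$ is a (classical, unweighted) Caccioppoli set in $\Om$. Denote by $|D\chi_E|$ the classical perimeter measure and by $\nu_E$ the associated generalized inner unit normal. On smooth compactly supported vector fields $X$, the identity $\divv_f X\,dv_f=\divv(fX)\,dv$ yields, together with the Euclidean Gauss--Green formula,
\[
\int_E\divv_f X\,dv_f=\int_E\divv(fX)\,dv=-\int_\Om\escpr{X,\nu_E}\,f\,d|D\chi_E|.
\]
Comparing with the weighted Gauss--Green identity \eqref{eq:gg2} and using that the resulting equality of vector-valued measures holds for every admissible $X$, we deduce $dm_f=f\,d|D\chi_E|$ and that $N_f$ and $\nu_E$ coincide wherever both are defined. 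Since $f$ is smooth and strictly positive on $\barOm$, this identification is genuine (no degeneration) and $\nu_E$ inherits from $N_f$ the property of being defined and continuous on all of $\ptl E\cap\Om$.

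With this in hand, the conclusion follows immediately from \cite[Thm.~4.11]{giusti}, which asserts that a classical Caccioppoli set whose De~Giorgi inner normal is defined and continuous on its topological boundary has that boundary a $C^1$ hypersurface. Applying this locally on $\Om$ to $E$ gives that $\ptl E\cap\Om$ is a $C^1$ hypersurface, as required.

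The only point that requires any real care is the identification $N_f=\nu_E$ on $\ptl E\cap\Om$, since a~priori $N_f$ is defined through the weighted Riesz representation and $\nu_E$ through the unweighted one; everything else is a direct invocation of Giusti's theorem, so no serious obstacle is expected.
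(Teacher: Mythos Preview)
Your proposal is correct and follows essentially the same approach as the paper: the paper's proof is simply the one-line remark that the result ``is a direct consequence of \cite[Thm.~4.11]{giusti} and the fact that weighted Caccioppoli sets coincide with unweighted Caccioppoli sets.'' You have merely spelled out the identification $N_f=\nu_E$ (via $\divv_f X\,dv_f=\divv(fX)\,dv$) that underlies this reduction, which is exactly the content the paper leaves implicit.
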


Observe that we have defined in \eqref{eq:wp} the weighted perimeter functional for densities $f=e^\psi$ \emph{which are smooth on} $\barOm$. In the case where $f$ is merely continuous on $\Om$ we can use a relaxation procedure as in the papers of Ambrosio~\cite{ambrosio-bv} and Miranda~\cite{miranda-bv} to define the \emph{weighted perimeter} of a Borel set $E\subeq\Om$ by 
\begin{equation}
\label{eq:wp2}
P_f(E,\Om):=\inf\bigg\{\liminf_{k\to\infty}A_f(\ptl E_k\cap\Om)\,;\,\{E_k\}_{k\in\nn}\to E\,\text{ in }L^1(\Om,dv_f)\bigg\},
\end{equation}
where $\{E_k\}_{k\in\nn}$ ranges over sequences of open sets in $\Om$ such that $\ptl E_k\cap\Om$ is smooth.

In the next result we gather some basic properties of the above perimeter functional that will be used in Section~\ref{subsec:non-smooth} of the paper. For the proof the reader is referred to \cite[Prop.~3.6]{miranda-bv} and \cite[Thm.~3.2]{baldi}.

\begin{lemma}
\label{lem:percont}
Let $\Om$ be an open set of $\rrn$ endowed with a continuous density $f=e^\psi$. Then, the following facts hold:
\begin{itemize}
\item[(i)] $($Lower semicontinuity$)$ If $\{E_k\}_{k\in\nn}$ is a sequence of Borel sets in $\Om$ which converges in $L^1(\Om,dv_f)$ to a Borel set $E$, then $P_f(E,\Om)\leq\liminf_{k\to\infty}P_f(E_k,\Om)$.
\item[(ii)] $($Perimeter of smooth sets$)$ $P_f(E,\Om)=A_f(\ptl E\cap\Om)$, for any open set $E\sub\Om$ such that $\ptl E\cap\Om$ is smooth.
\item[(iii)] $($Approximation by smooth sets$)$ For any Borel set $E\sub\Om$, there is a sequence $\{E_k\}_{k\in\nn}$ of open sets in $\Om$ such that $\ptl E_k\cap\Om$ is smooth, $\{E_k\}_{k\in\nn}\to E$ in $L^1(\Om,dv_f)$, and $\lim_{k\to\infty} P_f(E_k,\Om)=P_f(E,\Om)$.
\end{itemize}
\end{lemma}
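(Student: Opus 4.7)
The three assertions interact naturally: (ii) is the main analytic content and the principal obstacle, while (i) and (iii) follow from (ii) together with the infimum structure of \eqref{eq:wp2}. I plan to prove them in the order (i), (ii), (iii). For (i), a standard diagonal argument suffices. Given $\{E_k\}\to E$ in $L^1(\Om,dv_f)$, reduce to the case where $\ell:=\liminf_k P_f(E_k,\Om)$ is finite and realized along a subsequence. By \eqref{eq:wp2}, for each $k$ one can pick open sets $E_k^j$ with smooth interior boundary such that $E_k^j\to E_k$ in $L^1(\Om,dv_f)$ and $A_f(\ptl E_k^j\cap\Om)\to P_f(E_k,\Om)$ as $j\to\infty$. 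A diagonal extraction then produces a sequence $F_k:=E_k^{j(k)}$ of smooth-boundary open sets converging to $E$ in $L^1(\Om,dv_f)$ with $\limsup_k A_f(\ptl F_k\cap\Om)\leq\ell$. Feeding $\{F_k\}$ into \eqref{eq:wp2} gives $P_f(E,\Om)\leq\ell$, proving (i).

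Part (ii) is where the work lies. The inequality $P_f(E,\Om)\leq A_f(\ptl E\cap\Om)$ is immediate from \eqref{eq:wp2} with the constant sequence $E_k\equiv E$. For the reverse, let $\{E_k\}\to E$ in $L^1(\Om,dv_f)$ with each $\ptl E_k\cap\Om$ smooth; I must show
\[
\liminf_k A_f(\ptl E_k\cap\Om)\geq A_f(\ptl E\cap\Om).
\]
This is a Reshetnyak-type lower semicontinuity for a merely continuous (and possibly unbounded) weight, and I would handle it by localization. Fix $\eps>0$ and cover $\ptl E\cap\Om$ by a locally finite family of open balls $B_i\sub\sub\Om$ on which $f$ oscillates by at most $\eps$ around values $f(p_i)$; by positivity and continuity of $f$, shrinking $\eps$ if needed, $f(p_i)-\eps>0$. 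On each ball, $A_f(\ptl E_k\cap B_i)\geq (f(p_i)-\eps)\,P(E_k,B_i)$, where $P(\cdot,B_i)$ denotes unweighted Euclidean perimeter. The classical lower semicontinuity of unweighted perimeter under $L^1_{\mathrm{loc}}$ convergence gives $\liminf_k P(E_k,B_i)\geq P(E,B_i)$, and smoothness of $\ptl E$ identifies this last quantity with the Euclidean area $a(\ptl E\cap B_i)$. Combining the local estimates by means of a smooth partition of unity subordinate to $\{B_i\}$ (to avoid double-counting on overlaps) and letting $\eps\to 0$ yields the desired bound. The main obstacle in the whole proof is exactly the need to promote the unweighted lower semicontinuity to the weighted functional when $f$ is only continuous; the partition-of-unity step also has to cope with the possibility that $\ptl E\cap\Om$ is non-compact, which is why the cover is taken locally finite.

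For (iii), only bookkeeping remains. The infimum definition \eqref{eq:wp2} supplies, for each $\eps>0$, a sequence of open sets $\{E_k^\eps\}$ with smooth interior boundary, $E_k^\eps\to E$ in $L^1(\Om,dv_f)$ and $\liminf_k A_f(\ptl E_k^\eps\cap\Om)\leq P_f(E,\Om)+\eps$. A diagonal extraction as in (i) produces a single sequence $\{E_k\}$ of smooth-boundary open sets converging to $E$ in $L^1(\Om,dv_f)$ with $\limsup_k A_f(\ptl E_k\cap\Om)\leq P_f(E,\Om)$. Part (ii), applied to each $E_k$, converts $A_f(\ptl E_k\cap\Om)$ into $P_f(E_k,\Om)$, and part (i) then yields
\[
P_f(E,\Om)\leq\liminf_k P_f(E_k,\Om)=\liminf_k A_f(\ptl E_k\cap\Om)\leq P_f(E,\Om),
\]
forcing equality throughout, so that $A_f(\ptl E_k\cap\Om)=P_f(E_k,\Om)\to P_f(E,\Om)$ along the constructed sequence.
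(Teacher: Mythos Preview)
The paper does not give its own proof of this lemma: it simply refers the reader to \cite[Prop.~3.6]{miranda-bv} and \cite[Thm.~3.2]{baldi}. So your proposal is not reproducing the paper's argument but rather supplying a self-contained one, which is fine and potentially useful.

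Your arguments for (i) and (iii) are correct and standard: both are essentially bookkeeping with the relaxation definition \eqref{eq:wp2}, and the diagonal extractions you describe work as claimed.

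Part (ii) is, as you say, where the content lies, and your sketch has a small but genuine gap at the ``combine via partition of unity'' step. The local estimates you establish are lower bounds on $A_f(\partial E_k\cap B_i)$ in terms of the \emph{full} unweighted perimeter $P(E_k,B_i)$. But because the balls overlap, $\sum_i A_f(\partial E_k\cap B_i)\geq A_f(\partial E_k\cap\Om)$, not $\leq$, so summing your local lower bounds does not bound $A_f(\partial E_k\cap\Om)$ from below. A partition of unity $\{\chi_i\}$ fixes the overlap on the left-hand side, giving $A_f(\partial E_k\cap\Om)=\sum_i\int_{\partial E_k}\chi_i f\,da\geq\sum_i(f(p_i)-\eps)\int_{\partial E_k}\chi_i\,da$, but then the quantity you must pass to the $\liminf$ is $\int_{\partial E_k}\chi_i\,da$, not $P(E_k,B_i)$. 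That is, you need lower semicontinuity of the functional $E'\mapsto\int_{\partial E'}\chi_i\,da$ for each fixed nonnegative $\chi_i\in C_c(\Om)$, which is itself a weighted statement rather than the bare unweighted perimeter LSC you invoke. This is still standard---it follows, for instance, from the fact that if the perimeters $P(E_k,B_i)$ stay bounded then $|D\chi_{E_k}|$ has a weak-$*$ accumulation point dominating $|D\chi_E|$, and testing against the nonnegative continuous $\chi_i$ gives the inequality---but it should be stated explicitly, since as written your local estimate and your combination mechanism do not quite match. Once this is made precise, the rest of your argument goes through.
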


We finally remark that standard density arguments imply that, if the function $f=e^\psi$ is smooth on $\barOm$, then the two definitions of weighted perimeter in \eqref{eq:wp} and \eqref{eq:wp2} coincide.

\section{Isoperimetric sets: existence, regularity and null capacity property of the interior boundary}
\label{sec:isoperimetric}

In this section we first recall some known results about the existence and regularity of solutions for the \emph{weighted isoperimetric problem} inside Euclidean open sets with density. Then, we will use technical arguments to show that, for bounded densities, the regular part of the interior boundary of such solutions has null weighted capacity.

Let $\Om$ be an open subset of $\rrn$ endowed with a continuous density $f=e^\psi$. The \emph{weighted isoperimetric profile} of $\Om$ is the function $I_{\Om,f}:(0,V_f(\Om))\to\rr^+_0$ given by
\begin{equation}
\label{eq:isopro}
I_{\Om,f}(v):=\inf\{P_f(E,\Om)\,;\,E\sub\Om\text{ is a Borel set with }V_f(E)=v\},
\end{equation}
where $V_f(E)$ is the weighted volume in \eqref{eq:volume} and $P_f(E,\Om)$ the weighted perimeter in \eqref{eq:wp2}. A \emph{weighted isoperimetric region} or \emph{weighted minimizer} of volume $v\in(0,V_f(\Om))$ is a Borel set $E\sub\Om$ with $V_f(E)=v$ and $P_f(E,\Om)=I_{\Om,f}(v)$.

The existence of weighted minimizers is a non-trivial question. In the works of Bayle, Ca\~nete, Morgan and the author~\cite{rcbm}, and Morgan and Pratelli~\cite{morgan-pratelli}, we can find some elementary examples showing that minimizers need not exist if $\Om$ is unbounded. These papers also provide sufficient conditions for existence involving the growth of the density at infinity, see \cite[Sect.~2]{rcbm} and \cite[Sects.~3 and 7]{morgan-pratelli}. Here we will only use the following result, whose proof relies on the lower semicontinuity of the weighted perimeter and standard compactness arguments, see Morgan \cite[Sects.~5.5. and 9.1]{gmt}, Ca\~nete, Miranda and Vittone~\cite[Prop.~2.2]{cvm}, and Milman \cite[Sect.~2.2]{milman}.

\begin{theorem}[Existence]
\label{th:exist}
Let $\Om$ be an open set of $\rrn$ endowed with a continuous density $f=e^\psi$. If $V_f(\Om)<+\infty$, then there are weighted isoperimetric regions in $\Om$ of any given volume.
\end{theorem}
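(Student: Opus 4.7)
The plan is to use the direct method of the calculus of variations, exploiting the lower semicontinuity and compactness properties of the weighted perimeter functional stated in Lemma~\ref{lem:percont}. Fix $v\in (0,V_f(\Om))$ and take a minimizing sequence $\{E_k\}_{k\in\nn}$ of Borel sets in $\Om$ with $V_f(E_k)=v$ and $P_f(E_k,\Om)\to I_{\Om,f}(v)$ as $k\to\infty$. By item (iii) of Lemma~\ref{lem:percont}, after a diagonal argument we may assume that each $E_k$ is itself a smooth open set, so that $P_f(E_k,\Om)=A_f(\ptl E_k\cap\Om)$ is uniformly bounded.

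The first step is to extract a subsequence converging to a candidate minimizer. The continuity of $f=e^\psi$ implies that on any open set $U\sub\sub\Om$ there are constants $0<m_U\leq M_U<+\infty$ with $m_U\leq f\leq M_U$, so the weighted and Euclidean perimeters of $E_k$ in $U$ are comparable. Exhaust $\Om$ by an increasing sequence of precompact open sets $\{U_j\}_{j\in\nn}$, and apply the classical compactness theorem for Euclidean Caccioppoli sets on each $U_j$; a diagonal extraction then produces a subsequence (still denoted $\{E_k\}$) and a Borel set $E\sub\Om$ such that $\chi_{E_k}\to\chi_E$ in $L^1_{\text{loc}}(\Om,dv)$.

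The second step is to upgrade this to convergence in $L^1(\Om,dv_f)$. Since $f$ is bounded on compact sets, local Euclidean convergence gives local convergence with respect to $dv_f$. Because $V_f(\Om)<+\infty$, for any $\eps>0$ there is $j_0$ with $V_f(\Om\setminus U_{j_0})<\eps/2$; combining this with $L^1(U_{j_0},dv_f)$-convergence we conclude that $\int_\Om |\chi_{E_k}-\chi_E|\,dv_f<\eps$ for $k$ large, which proves $\chi_{E_k}\to\chi_E$ in $L^1(\Om,dv_f)$. In particular $V_f(E)=\lim_k V_f(E_k)=v$, so $E$ is an admissible competitor. Finally, the lower semicontinuity in item (i) of Lemma~\ref{lem:percont} yields
\[
P_f(E,\Om)\leq\liminf_{k\to\infty} P_f(E_k,\Om)=I_{\Om,f}(v),
\]
and the reverse inequality is immediate from the definition of $I_{\Om,f}$, so $E$ is a weighted minimizer of volume $v$.

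The only delicate point is the passage from $L^1_{\text{loc}}(\Om,dv)$ convergence to $L^1(\Om,dv_f)$ convergence, which is precisely where the hypothesis $V_f(\Om)<+\infty$ enters in an essential way. Without this assumption, mass of the sets $E_k$ could escape to infinity and the limit could fail to have the prescribed volume, an issue that is well documented in the existence theory for unbounded ambient spaces with density \cite{rcbm, morgan-pratelli, cvm}.
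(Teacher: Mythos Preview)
Your argument is correct and follows exactly the standard direct-method approach that the paper alludes to (the paper does not actually give its own proof of Theorem~\ref{th:exist}, but cites \cite{gmt}, \cite{cvm}, and \cite{milman} for ``lower semicontinuity of the weighted perimeter and standard compactness arguments''). One small cosmetic point: after invoking Lemma~\ref{lem:percont}(iii) to replace each $E_k$ by a smooth set, you no longer have $V_f(E_k)=v$ exactly but only $V_f(E_k)\to v$; this is harmless since your conclusion $V_f(E)=\lim_k V_f(E_k)=v$ still holds, and in fact the smoothing step is unnecessary, as the local compactness for Caccioppoli sets applies directly to the original minimizing sequence.
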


As pointed out by Morgan \cite[Sect.~3.10]{morgan-reg} the regularity of weighted isoperimetric regions inside a smooth open set $\Om$ with smooth density $f$ is the same as for the classical setting of constant density $f=1$. The latter was studied by Gonzalez, Massari and Tamanini, who obtained interior regularity \cite{go-ma-ta}, and by Gr\"uter, who proved regularity at the free boundary \cite{gruter}. We gather their results in the next theorem, see also \cite[Sect.~2]{milman}.

\begin{theorem}[Regularity]
\label{th:reg}
Let $\Om$ be a smooth open set of $\rrn$ endowed with a density $f=e^\psi$ smooth and positive in $\overline{\Om}$. If $E$ is a weighted isoperimetric region in $\Om$, then the interior boundary $\overline{\ptl E\cap\Om}$ is a disjoint union $\Sg\cup\Sg_0$, where $\Sg$ is a smooth embedded hypersurface with $($possibly empty$)$ boundary $\ptl\Sg=\Sg\cap\ptl\Om$, and $\Sg_0$ is a closed set of singularities with Hausdorff dimension less than or
equal to $n-7$. 
\end{theorem}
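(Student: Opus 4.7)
The plan is to reduce the weighted partitioning problem to the classical constant-density setting by exploiting that $f=e^\psi$ is smooth and positive on $\barOm$, and then to invoke the established regularity theory for almost-minimizers of Euclidean perimeter. First, I would observe that on any precompact open set $U\sub\sub\barOm$ there are constants $0<c\leq C$ with $c\leq f\leq C$, so weighted and Euclidean perimeters are locally comparable and $E$ is a Caccioppoli set in the unweighted sense.

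Second, I would show that $E$ is an \emph{almost-minimizer} of the Euclidean perimeter in the sense of Tamanini. The volume constraint is removed by the usual exchange argument: fix a small ball $B_0\sub\Om$ on which $E$ has density strictly between $0$ and $1$; for any competitor $F$ with $F\triangle E\sub\sub B(p,r)$ for small $r$, modify $F$ inside $B_0$ so that it has the same weighted volume as $E$. The cost of this modification is of order $|V_f(F)-V_f(E)|=O(r^n)$ in weighted perimeter. Using the local bounds on $f$ together with smoothness of $\psi$, this yields an estimate
\[
P(E,B(p,r))\leq P(F,B(p,r))+\Lambda\,r^n,
\]
where $P$ denotes Euclidean perimeter and $\Lambda$ depends only on the local data. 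Hence $E$ is a $(\Lambda,1)$-minimizer of Euclidean perimeter at every small enough scale.

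Third, I would apply the classical regularity theorems. Interior regularity of Gonzalez, Massari and Tamanini \cite{go-ma-ta} yields that $\ptl E\cap\Om$ is, off a closed singular set $\Sg_0$, a $C^{1,\alpha}$ embedded hypersurface; the bound $\dim_\h\Sg_0\leq n-7$ comes from the standard Federer--Simon dimension reduction, since blow-ups of almost-minimizers are area-minimizing Euclidean cones and no such singular cones exist in $\rr^k$ for $k\leq 6$. At points of $\overline{\ptl E\cap\Om}\cap\ptl\Om$ I would invoke Gr\"uter's free-boundary regularity theorem \cite{gruter}, which provides $C^{1,\alpha}$ regularity up to the boundary and identifies $\ptl\Sg=\Sg\cap\ptl\Om$; the contribution of $\nabla\psi$ to the first variation shows up only as a bounded lower-order term, so Gr\"uter's proof goes through unchanged. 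Once $C^{1,\alpha}$ is established, $\Sg$ satisfies the Euler--Lagrange equation $H_f=\text{const}$ from \eqref{eq:fmc}, which is a quasilinear elliptic PDE with smooth coefficients, and elliptic Schauder theory bootstraps $\Sg$ to a smooth embedded hypersurface (smoothly meeting $\ptl\Om$ along $\ptl\Sg$).

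The main obstacle is not conceptual but consists in verifying that the almost-minimality estimate is robust enough and that Gr\"uter's boundary arguments adapt cleanly: the volume-fixing correction must be available uniformly for balls up to a fixed scale, the density bounds must be uniform on compact subsets of $\barOm$, and the reflection/odd-extension techniques in the free-boundary theorem must accommodate the smooth twisting of perimeter by $f$. Since $f$ is smooth and bounded away from zero on $\barOm$ and $\ptl\Om$ is smooth, all of this is routine; as pointed out by Morgan \cite[Sect.~3.10]{morgan-reg}, the weighted regularity theory coincides with the unweighted one and no new phenomena arise beyond the replacement of the Euclidean mean curvature by the $f$-mean curvature $H_f$.
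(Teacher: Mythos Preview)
Your outline is sound and is, in fact, more than the paper provides: the paper does not prove Theorem~\ref{th:reg} at all, but simply records it as a known consequence of the interior regularity of Gonzalez--Massari--Tamanini~\cite{go-ma-ta} and the free-boundary regularity of Gr\"uter~\cite{gruter}, together with Morgan's observation~\cite[Sect.~3.10]{morgan-reg} that for a smooth positive density the weighted regularity theory coincides with the unweighted one. Your sketch---local comparability of $P_f$ and $P$, removal of the volume constraint via a fixed exchange ball, almost-minimality of $E$ for Euclidean perimeter, and Schauder bootstrapping from the $C^{1,\alpha}$ conclusion using the first-variation equation $H_f=\text{const}$---is precisely the argument those references carry out and that Morgan alludes to.

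Two minor remarks. First, the volume discrepancy for a competitor $F$ with $F\triangle E\sub\sub B(p,r)$ is actually $O(r^{n+1})$, not merely $O(r^n)$; this matters because the multiplicative error $(1+Cr)$ coming from the oscillation of $f$ over $B(p,r)$ must be converted to an additive one, and that step uses an a~priori density bound $P(E,B(p,r))\leq Cr^n$ (itself obtained by a direct comparison). Second, for Gr\"uter's boundary theorem you need the almost-minimality to hold in balls centered on $\ptl\Om$ as well, which requires the volume-fixing ball $B_0$ to lie away from the boundary point under consideration; since $\ptl\Om$ is smooth and $f$ is smooth up to $\barOm$, this is routine, as you note.
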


\begin{remark}
\label{re:isop}
From the previous theorem, and taking into account that the weighted perimeter in \eqref{eq:wp} does not change by sets of volume zero, we can always assume that a weighted isoperimetric region $E$ in $\Om$ is an open set. Note that $E$ has almost smooth interior boundary, so that $P_f(E,\Om)=A_f(\Sg)$ and, more generally, $P_f(E,U)=A_f(\Sg\cap U)$ for any open set $U\subeq\Om$. The condition $\ptl\Sg=\Sg\cap\ptl\Om$ prevents the existence of interior points of $\Sg$ inside $\ptl\Om$.
\end{remark}

Weighted minimizers and their interior boundaries need not be bounded. This is illustrated by the Gaussian density $\ga_c(p):=e^{-c |p|^2}$, $c>0$, for which any weighted isoperimetric region is, up to a set of volume zero, a Euclidean half-space. Some criteria ensuring boundedness have been found by Bayle, Ca\~nete, Morgan and the author~\cite[Sect.~2]{rcbm}, Morgan and Pratelli~\cite[Sect.~5]{morgan-pratelli}, and Cinti and Pratelli{ \cite{pratelli-cinti}.  In spite of the possible lack of compactness we will be able to show that, for smooth bounded densities, the regular part of the interior boundary is a hypersurface of null weighted capacity. To prove this fact we first establish a uniform upper estimate for the perimeter of a weighted minimizer inside open balls of $\rrn$. 

\begin{proposition}[Upper $n$-Ahlfors-regularity of the interior boundary]
\label{prop:growth}
Let $\Om$ be an open set of $\rrn$ endowed with a bounded density $f=e^\psi$ smooth on $\barOm$. If $E$ is a weighted isoperimetric region in $\Om$, then there exist constants $C>0$ and $R>0$, such that 
\begin{equation}
\label{eq:aub}
P_f(E,\Om\cap B(p,r))\leq Cr^n,
\end{equation}
for any open ball $B(p,r)\sub\rrn$ of radius $r\leq R$.
\end{proposition}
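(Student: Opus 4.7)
The plan is to apply the classical cut-and-refill strategy: compare $E$ against the competitor obtained by deleting $E \cap B(p,r)$ and restoring the lost weighted volume elsewhere. This reduces the result to (i) the perimeter bound of Lemma~\ref{lem:peresti}, (ii) a volume-fixing device, and (iii) the isoperimetric minimality of $E$.

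The preparatory step is to set up two disjoint ``reservoir'' balls. Since $0 < V_f(E) < V_f(\Om)$, the density characterization of finite perimeter sets yields a point of $\barOm$ of $E$-density in $(0,1)$; choosing two such points sufficiently close to two different regular boundary points of $E$ (or, more generally, using Almgren's construction), I fix disjoint closed balls $\overline{B}_1,\overline{B}_2\sub\sub\Om$ and a constant $K>0$ with the following property: for each $i\in\{1,2\}$ and each $|t|<\eps_0$, there is a competitor $E_i^t$ agreeing with $E$ outside $B_i$, with $V_f(E_i^t)=V_f(E)+t$ and $P_f(E_i^t,\Om)\leq P_f(E,\Om)+K|t|$. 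Here $K$ depends on the $L^\infty$ bound of $f$ and on a smooth compactly supported vector field in $B_i$ along which the weighted volume of $E$ has a nonvanishing derivative. I then set $R:=\tfrac{1}{2}\operatorname{dist}(\overline{B}_1,\overline{B}_2)$, chosen also small enough that $\|f\|_\infty\vol(B(p,R))<\eps_0$, so that for every $p\in\rrn$ at least one reservoir is disjoint from $\overline{B(p,R)}$.

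Now fix $p$ and $r\leq R$, and let $B_0$ be whichever of $\overline{B}_1,\overline{B}_2$ is disjoint from $\overline{B(p,r)}$. Set $t:=V_f(E\cap B(p,r))\geq 0$. Because the two regions are disjoint, I can combine the removal of $E\cap B(p,r)$ with the reservoir perturbation inside $B_0$ to build a competitor $\tilde E$ with $V_f(\tilde E)=V_f(E)$ and
\begin{equation*}
P_f(\tilde E,\Om)\leq P_f(E\setminus B(p,r),\Om)+K\,t.
\end{equation*}
Applying Lemma~\ref{lem:peresti}, for a.e.\ $r\leq R$,
\begin{equation*}
P_f(E\setminus B(p,r),\Om)\leq P_f(E,\Om\setminus B(p,r))+A_f(E\cap\ptl B(p,r)).
\end{equation*}
The minimality $P_f(E,\Om)\leq P_f(\tilde E,\Om)$, together with additivity of the weighted perimeter measure $P_f(E,\Om)=P_f(E,\Om\cap B(p,r))+P_f(E,\Om\setminus\overline{B(p,r)})$ valid for a.e.\ $r$, then yields
\begin{equation*}
P_f(E,\Om\cap B(p,r))\leq A_f(E\cap\ptl B(p,r))+K\,V_f(E\cap B(p,r)).
\end{equation*}

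The right-hand side is controlled by $Cr^n$. Indeed, since $f$ is bounded, $A_f(E\cap\ptl B(p,r))\leq\|f\|_\infty\,\h^n(\ptl B(p,r))\leq C_1 r^n$ and $V_f(E\cap B(p,r))\leq C_2 r^{n+1}$, so
\begin{equation*}
P_f(E,\Om\cap B(p,r))\leq C_1 r^n+KC_2 r^{n+1}\leq Cr^n\quad\text{for a.e.\ }r\leq R.
\end{equation*}
Finally, because $r\mapsto P_f(E,\Om\cap B(p,r))$ is monotone nondecreasing and the set of ``good'' radii is dense in $(0,R)$, passing to a limit from the right of a good sequence upgrades the bound to every $r\leq R$. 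The main technical hurdle is the construction of the two reservoirs with a uniform perimeter-cost constant $K$ that does not degenerate at $\ptl\Om$; this uses the smoothness of $f$ on $\barOm$ and the fact that one can locate the density-$(0,1)$ interior points at positive distance from the boundary, so that the deforming vector field can be chosen with compact support in $\Om$.
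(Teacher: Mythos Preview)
Your proof is correct and follows the same cut--and--refill architecture as the paper: remove $E\cap B(p,r)$, restore the lost weighted volume elsewhere, invoke minimality together with Lemma~\ref{lem:peresti}, and bound the resulting terms by $Cr^n$ using $\|f\|_\infty$.

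The difference lies only in the volume--restoration device. You use an Almgren--type construction with two reservoir balls near regular boundary points of $E$, paying perimeter cost $K|t|$ linear in the volume defect $t$, and you need two reservoirs so that $B(p,r)$ always misses one of them. The paper instead chooses a single ball $\overline{B}(p_0,R_0)\subset\Om\setminus\overline{E}$ and refills with a concentric sub--ball $\overline{B}(p_0,r'_p)$ of exactly the right weighted volume, found by the intermediate value theorem; the perimeter cost is then just $A_f(\partial B(p_0,r'_p))\leq \alpha\lambda_n(r'_p)^n$, and a lower bound for $f$ on $B(p_0,R_0)$ gives $r'_p\leq\beta r$. Because this reservoir sits in $\Om\setminus\overline{E}$, it is automatically disjoint from $E\setminus B(p,r)$ regardless of where $p$ is, so no second reservoir is required. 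The paper's route is therefore more elementary---it avoids the Almgren deformation lemma entirely and does not rely on the regularity of $\partial E$---while your approach is the more portable one, applicable in settings where $\Om\setminus\overline{E}$ might have empty interior.
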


\begin{proof}
Let $\alpha:=\sup\{f(p)\,;\,p\in\Om\}$. Choose an open ball $B(p_0,R_0)$ such that $\overline{B}(p_0,R_0)\sub\Om-\overline{E}$. Define the constant 
\[
R:=\left(\frac{V_f(B(p_0,R_0))}{\alpha\,\rho_n}\right)^{1/(n+1)},
\]
where $\rho_n$ is the Euclidean volume of $B(0,1)$.

Fix a point $p\in\rrn$ and take an open ball $B(p,r)$ of radius $r\leq R$. Note that
\[
V_f(E\cap B(p,r))\leq V_f(\Om\cap B(p,r))\leq\alpha\,\rho_n\,r^{n+1}\leq V_f(B(p_0,R_0)),
\]
by \eqref{eq:volume} and the definition of $R$. Therefore, there is a unique $r'_p\leq R_0$ such that
\begin{equation}
\label{eq:rprimap}
V_f(E\cap B(p,r))=V_f(B(p_0,r'_p)).
\end{equation}

Consider the set $E':=(E-B(p,r))\cup\overline{B}(p_0,r'_p)$. It is clear that $E'\sub\Om$ and $V_f(E')=V_f(E)$. By using that $E$ is a weighted isoperimetric region together with Lemma~\ref{lem:peresti}, we get the following
\begin{align*}
P_f(E,\Om)&\leq P_f(E',\Om)=P_f(E-B(p,r),\Om)+A_f(\ptl B(p_0,r'_p))
\\
&\leq P_f(E,\Om-B(p,r))+A_f(E\cap\ptl B(p,r))+A_f(\ptl B(p_0,r'_p)),
\end{align*}
for almost every $r\leq R$. On the other hand, we have
\[
P_f(E,\Om\cap B(p,r))+P_f(E,\Om-B(p,r))\leq P_f(E,\Om)
\]
since $P_f(E,\cdot)$ is a finite Borel measure in $\Om$. By combining the two previous inequalities and taking into account the definition of weighted area in \eqref{eq:area}, we obtain
\begin{equation}
\label{eq:ineq1}
P_f(E,\Om\cap B(p,r))\leq A_f(E\cap\ptl B(p,r))+A_f(\ptl B(p_0,r'_p))\leq\alpha\,\la_n\,(r^n+(r_p')^n),
\end{equation}
for almost every $r\leq R$, where $\la_n$ is the Euclidean area of the unit sphere in $\rrn$. 

Finally, let $m_0:=\inf\{f(q)\,;\,q\in B(p_0,R_0)\}$. As $r'_p\leq R_0$ then $f(q)\geq m_0$ for any $q\in B(p_0,r'_p)$. From \eqref{eq:volume} and \eqref{eq:rprimap} we deduce
\[
m_0\,\rho_n\,(r'_p)^{n+1}\leq V_f(B(p_0,r'_p))\leq V_f(\Om\cap B(p,r))\leq\alpha\,\rho_n\,r^{n+1}.
\]
The previous inequality yields $r'_p\leq\beta\, r$, for any $r\leq R$, where $\beta$ is a positive constant which does not depend on $p$ and $r$. Plugging this information into \eqref{eq:ineq1}, we conclude that
\[
P_f(E,\Om\cap B(p,r))\leq\alpha\,\la_n\,(1+\beta^n)\,r^n,
\]
for almost every $r\leq R$. This conclusion easily extends for any $r>0$ by the dominated convergence theorem. The proof is completed.
\end{proof}

\begin{remark}
Inequalities similar to \eqref{eq:aub} have been obtained in different settings, see Bayle \cite[Lem.~A.1.2]{bayle-thesis} and Morgan \cite[Prop.~3.2]{morgan-reg} for Riemannian manifolds, and Leonardi and Rigot \cite[Lem.~5.1]{lr} for Carnot groups. More recently, Cinti and Pratelli have deduced \eqref{eq:aub} in $\Om=\rrn$ endowed with a lower semicontinuous density $f$ bounded from above and below, see the proof of \cite[Thm.~5.7]{pratelli-cinti}.
\end{remark}

Now, we are ready to state and prove our main result in this section. 

\begin{theorem}[Null capacity property for weighted minimizers]
\label{th:nullcap}
Let $\Om$ be a smooth open set of $\rrn$ endowed with a bounded density $f=e^\psi$ smooth on $\barOm$. If $E$ is a weighted isoperimetric region in $\Om$, then the regular part $\Sg$ of the interior boundary $\overline{\ptl E\cap\Om}$ is a hypersurface of null weighted capacity. 
\end{theorem}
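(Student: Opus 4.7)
The goal is, for an arbitrary compact set $K\subeq\Sg$, to construct a sequence of admissible test functions in the definition~\eqref{eq:capacity} of $\text{Cap}_f(K)$ whose Dirichlet energies tend to zero. Since the regularity statement of Theorem~\ref{th:reg} together with Remark~\ref{re:isop} give $P_f(E,\Om)=A_f(\Sg)<+\infty$, the hypersurface $\Sg$ has \emph{finite weighted area}; this is the first ingredient. The second is the Ahlfors bound of Proposition~\ref{prop:growth}, and the third is the dimensional estimate $\dim_{\mathcal{H}}\Sg_0\leq n-7$, which in particular yields $\mathcal{H}^{n-2}(\Sg_0)=0$. The plan is to combine these with a two-scale cutoff argument modeled on Sternberg--Zumbrun \cite{sz}, with one cutoff handling the non-compactness of $\Sg$ in $\rrn$ and another cutoff handling the (possibly non-compact) singular set $\Sg_0$.

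Fix $K\subeq\Sg$ compact. Since $\Sg\cap\Sg_0=\emptyset$ with $\Sg_0$ closed in $\overline{\ptl E\cap\Om}$ and $K$ is compact in $\rrn$, we have $d_0:=\text{dist}(K,\Sg_0)>0$. Fix $p_0\in\rrn$ and for $k\in\nn$ introduce the Lipschitz radial cutoff $\chi_k\in C^{0,1}(\rrn)$ with $\chi_k=1$ on $B(p_0,k)$, $\chi_k=0$ outside $B(p_0,2k)$, and $|\nabla\chi_k|\leq 1/k$. Because $A_f(\Sg)<+\infty$, restriction to $\Sg$ yields $\int_\Sg|\nabla_\Sg\chi_k|^2\,da_f\leq A_f(\Sg)/k^2\to 0$.

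For the second cutoff, fix $k$ and note that $\Sg_0\cap\overline{B}(p_0,2k)$ is compact and of vanishing $(n-2)$-dimensional Hausdorff measure. Hence for every $\eps>0$ we can cover it by finitely many Euclidean balls $\{B(q_i,r_i)\}_{i=1}^{m}$ with $r_i<\min\{R,d_0/4\}$ (for $R$ from Proposition~\ref{prop:growth}) and $\sum_i r_i^{n-2}<\eps$. Define a Lipschitz function $\eta_\eps$ on $\rrn$ that vanishes on $\bigcup_i B(q_i,r_i)$, equals $1$ outside $\bigcup_i B(q_i,2r_i)$, and satisfies $|\nabla\eta_\eps|\leq 2/r_i$ on the annulus $B(q_i,2r_i)\setminus B(q_i,r_i)$. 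Applying Proposition~\ref{prop:growth} on each ball,
\[
\int_\Sg|\nabla_\Sg\eta_\eps|^2\,da_f\leq\sum_{i=1}^{m}\frac{4}{r_i^2}\,A_f(\Sg\cap B(q_i,2r_i))\leq 4C\sum_{i=1}^{m}(2r_i)^{n-2}\leq C'\,\eps.
\]
Moreover, since $r_i<d_0/4$ and the balls cover a subset of $\Sg_0$, we have $\eta_\eps=1$ on $K$. The product $\var_{k,\eps}:=\chi_k\,\eta_\eps$ is Lipschitz, takes values in $[0,1]$, equals $1$ on $K$ whenever $k$ is large enough that $K\sub B(p_0,k)$, and has support in $\Sg$ that is bounded in $\rrn$ (by $\chi_k$) and stays at positive Euclidean distance from $\Sg_0$ (by $\eta_\eps$); hence this support is compact in $\Sg$. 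A standard mollification places $\var_{k,\eps}$ in $C^\infty_0(\Sg)$ without increasing the relevant quantities, and the elementary bound $|\nabla_\Sg\var_{k,\eps}|^2\leq 2|\nabla_\Sg\chi_k|^2+2|\nabla_\Sg\eta_\eps|^2$ together with the two estimates above shows that, after a diagonal choice $\eps=\eps_k\to 0$ much faster than $1/k\to 0$, the functions $\var_k:=\var_{k,\eps_k}$ satisfy $\int_\Sg|\nabla_\Sg\var_k|^2\,da_f\to 0$. By~\eqref{eq:capacity} this gives $\text{Cap}_f(K)=0$, proving the theorem.

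The main obstacle is precisely the handling of the singular set $\Sg_0$ when it fails to be compact: the Ahlfors-regularity of Proposition~\ref{prop:growth} is only local (valid for radii $r\leq R$), so the covering argument must first be confined to the bounded region $\overline{B}(p_0,2k)$, and only then combined with a separate cutoff at infinity; reconciling the two scales without losing control of either gradient-energy contribution is the delicate point.
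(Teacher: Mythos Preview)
Your argument is correct and follows essentially the same two-scale cutoff strategy as the paper's proof: a radial cutoff exploiting $A_f(\Sg)<+\infty$ to handle non-compactness at infinity, multiplied by a cutoff near $\Sg_0$ whose energy is controlled via the Ahlfors bound of Proposition~\ref{prop:growth} and the fact that $\mathcal{H}^{n-2}(\Sg_0)=0$. The only organizational differences are that you fix $K$ first and build a $K$-dependent sequence (the paper builds a universal sequence working for all $K$), and that you use compactness of $\Sg_0\cap\overline{B}(p_0,2k)$ to get a finite cover directly (the paper covers all of $\Sg_0$ countably and then extracts a finite subfamily); neither changes the substance of the proof.
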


\begin{proof}
Recall that $\Sg$ has null weighted capacity if $\text{Cap}_f(K)=0$ for any compact subset $K\subeq\Sg$, see \eqref{eq:capacity}. To prove the theorem it is then enough to construct a sequence $\{\var_k\}_{k\in\nn}\sub H^1_0(\Sg,da_f)$ satisfying:
\begin{itemize}
\item[(i)] $0\leq\var_k\leq 1$, for any $k\in\nn$,
\item[(ii)] $\lim_{k\to\infty}\int_\Sg|\nabla_\Sg\var_k|^2\,da_f=0$,
\item[(iii)] for any compact $K\subeq\Sg$ there is $k_0\in\nn$ such that $\var_k=1$ on $K$, for any $k\geq k_0$.
\end{itemize}
For that, we will adapt the arguments employed by Sternberg and Zumbrun~\cite[Lem.~2.4]{sz} in the case of isoperimetric regions inside bounded Euclidean domains with constant density.

Let $\Sg_0$ be the singular set of $\overline{\ptl E\cap\Om}$. We know from Theorem~\ref{th:reg} that $\Sg_0$ is a closed set with $\h^q(\Sg_0)=0$, for any $q>n-7$. Here $\h^q$ denotes the $q$-dimensional Hausdorff measure in $\rrn$.  By Remark~\ref{re:isop} we get $P_f(E,U)=A_f(\Sg\cap U)$ for any open set $U\subeq\Om$. If $\Sg_0=\emptyset$, then $\Sg=\overline{\ptl E\cap\Om}$, which is a complete hypersurface of finite weighted area. In this case $\Sg$ has null weighted capacity by Example~\ref{ex:finite}. So, we can suppose $\Sg_0\neq\emptyset$. This implies, in particular, that $n\geq 7$ and $\mathcal{H}^{n-2}(\Sg_0)=0$.

Let $C>0$ and $R>0$ be the constants in Proposition~\ref{prop:growth}. Thus, we have
\begin{equation}
\label{eq:ce1}
A_f(\Sg\cap B(p,r))=P_f(E,\Om\cap B(p,r))\leq Cr^n,
\end{equation}
for any open ball $B(p,r)\sub\rrn$ with $r\leq R$.

Fix a number $k\in\nn$. Since $\mathcal{H}^{n-2}(\Sg_0)=0$, we can find a sequence $\{B(p_i,r_i)\}_{i\in\mathbb{N}}$ of open balls in $\rrn$ such that $\Sg_0\sub\cup_{i=1}^\infty B(p_i,r_i/2)$, and
\begin{equation}
\label{eq:ce2}
\sum_{i=1}^\infty r_i^{n-2}<\frac{1}{k}.
\end{equation}
Moreover, we may assume $r_i\leq R$, $2r_i\leq 1/k$, and $\Sg_0\cap B(p_i,r_i/2)\neq\emptyset$ for any $i\in\mathbb{N}$. In particular, inequality \eqref{eq:ce1} holds for any $B(p_i,r_i)$. From the fact that $\Sg_0\cap B(p_i,r_i/2)\neq\emptyset$ for any $i\in\mathbb{N}$, it follows that the set $W_k:=\cup_{i=1}^\infty B(p_i,r_i)$ satisfies
\begin{equation}
\label{eq:dist}
W_k\subeq\{p\in\rrn\,;\,\text{dist}(p,\Sg_0)<1/k\}.
\end{equation}

Take a function $\mu\in C^\infty(\rr)$ with $\mu(s)=0$ if $|s|\leq 1/2$, $\mu(s)=1$ if $|s|\geq 1$, and $0<\mu(s)<1$ otherwise. Denote $\alpha:=\max_{s\in\rr}|\mu'(s)|$.

For any $i\in\mathbb{N}$, let $\zeta_i(p):=\mu(d(p,p_i)/r_i)$, where $d(p,p_i):=|p-p_i|$. We get $\zeta_i\in C^\infty(\rrn)$ with $\zeta_i=0$ in $\overline{B}(p_i,r_i/2)$, $\zeta_i=1$ in $\rrn-B(p_i,r_i)$, and $0<\zeta_i<1$ in $B(p_i,r_i)-\overline{B}(p_i,r_i/2)$. Moreover, we have the gradient estimate
\begin{equation}
\label{eq:ce3}
|\nabla\zeta_i|\leq\alpha/{r_i} \ \text{ in } B(p_i,r_i)-\overline{B}(p_i,r_i/2).
\end{equation}

Let $J\sub\mathbb{N}$ be a finite set. We define $\zeta^J_k:\rrn\to [0,1]$ by $\zeta^J_k(p):=\min\{\zeta_i(p)\,;\,i\in J\}$. Note that $\zeta^J_k$ is a piecewise smooth function. Having in mind \eqref{eq:ce3}, \eqref{eq:ce1} and \eqref{eq:ce2}, we obtain
\begin{align*}
\int_\Sg|\nabla_\Sg\zeta_k^J|^2\,da_f&\leq\int_\Sg|\nabla\zeta_k^J|^2\,da_f
\leq\int_\Sg\bigg(\sum_{i\in J}|\nabla\zeta_i|^2\bigg)\,da_f
\\
\nonumber
&=\sum_{i\in J}\int_{\Sg\cap(B(p_i,r_i)-\overline{B}(p_i,r_i/2))}|\nabla\zeta_i|^2\,da_f
\\
\nonumber
&\leq\alpha^2\,\sum_{i\in J}\frac{A_f(\Sg\cap B(p_i,r_i))}{r_i^2}
\\
&\leq C\alpha^2\,\sum_{i\in J}r_i^{n-2}
\leq C\alpha^2\,\sum_{i=1}^\infty r_i^{n-2}<\frac{C\alpha^2}{k}.
\end{align*}
So, we have proved
\begin{equation}
\label{eq:ce5}
\int_\Sg|\nabla_\Sg\zeta_k^J|^2\,da_f<\frac{C\alpha^2}{k}, \ \text{ for any finite set }J\sub\nn.
\end{equation}

Now, we fix a point $p_0\in\Sg_0$, and we denote $\delta_k(p):=1-\mu(d(p,p_0)/k)$. This is a function in $C^\infty_0(\rrn)$ such that $\delta_k=1$ in $\overline{B}(p_0,k/2)$, $\delta_k=0$ in $\rrn-B(p_0,k)$, and $0<\delta_k<1$ in $B(p_0,k)-\overline{B}(p_0,k/2)$. The gradient of $\delta_k$ satisfies $|\nabla_\Sg\delta_k|^2\leq |\nabla\delta_k|^2\leq \alpha^2/k^2$, and so
\begin{equation}
\label{eq:ce7}
\lim_{k\to\infty}\int_\Sg|\nabla_\Sg\delta_k|^2\,da_f=0,
\end{equation}
since $A_f(\Sg)=P_f(E,\Om)<+\infty$.

Finally, we define the function $\var_k:\rrn\to [0,1]$ by $\var_k:=\delta_k\,\zeta^{J(k)}_k$, where $J(k)\sub\nn$ is a finite set such that the compact ball $\overline{B}(p_0,k)$ is contained in the union of the balls $\{B(p_i,r_i/2)\,;\,i\in J(k)\}$ together with the open set $\rrn-\Sg_0$. Clearly, if $p\in\Sg$ and $\var_k(p)\neq 0$, then $p\in\overline{\ptl E\cap\Om}\cap\overline{B}(p_0,k)\cap\big(\rrn-\cup_{i\in J(k)}B(p_i,r_i/2)\big)$. From here, it is easy to deduce that the support of $\var_k$ in $\Sg$ is a compact subset of $\overline{\ptl E\cap\Om}-\Sg_0=\Sg$. Note also that $\var_k\in H^1_0(\Sg,da_f)$. Moreover, by the Cauchy-Schwarz inequality we get
\begin{align*}
\int_\Sg|\nabla_\Sg\var_k|^2\,da_f &\leq\int_\Sg|\nabla_\Sg\delta_k|^2\,da_f+\int_\Sg|\nabla_\Sg\zeta_k^{J(k)}|^2\,da_f
\\
&+2\left(\int_\Sg|\nabla_\Sg\delta_k|^2\,da_f\right)^{1/2}\,
\left(\int_\Sg|\nabla_\Sg\zeta_k^{J(k)}|^2\,da_f\right)^{1/2},
\end{align*}
which tends to zero as $k\to\infty$ by \eqref{eq:ce7} and \eqref{eq:ce5}. On the other hand, it is clear that $\var_k^{-1}(1)=\overline{B}(p_0,k/2)-W'_k$, where $W'_k:=\cup_{i\in J(k)}B(p_i,r_i)$. So, for a given compact set $K\sub\Sg$, we can find by \eqref{eq:dist} a number $k_0\in\nn$ such that $\var_k=1$ on $K$, for any $k\geq k_0$. Therefore, the sequence $\{\var_k\}_{k\in\nn}$ satisfies the desired properties and the proof is completed.
\end{proof} 

\begin{remark}
The main ingredients in the construction of the sequence $\{\var_k\}_{k\in\nn}$ are the perimeter estimates in \eqref{eq:aub} and the condition $\mathcal{H}^{n-2}(\Sg_0)=0$. In our case both ingredients come from the fact that $E$ is a weighted minimizer. By using similar arguments,  Bayle~\cite[Prop.~2.5]{bayle-paper} obtained Theorem~\ref{th:nullcap} inside compact Riemannian manifolds with constant density. In a $C^2$ open set $\Om\subeq\rrn$ with constant density, Sternberg and Zumbrun \cite[Prop.~2.3]{sz} deduced \eqref{eq:aub} from the classical monotonicity formulas for rectifiable varifolds of bounded mean curvature in \cite[Sect.~17]{simon} and \cite{gruter-jost}. This was also done by Morgan and Ritor\'e~\cite[Lem.~3.1]{morgan-rit}, and by Ritor\'e and the author~\cite[Lem.~4.2]{cones}. So, in the Euclidean setting one can find the sequence $\{\var_k\}_{k\in\nn}$ inside any hypersurface $\Sg$ of bounded mean curvature and such that $\Sg_0:=\overline{\Sg}-\Sg$ is a closed set satisfying $\mathcal{H}^{n-2}(\Sg_0)=0$. It is worth mentioning that in all the previous constructions of $\{\var_k\}_{k\in\nn}$ it was assumed that the interior boundary $\overline{\ptl E\cap\Om}$ is compact. So, our Theorem~\ref{th:nullcap} generalizes such constructions to noncompact interior boundaries.
\end{remark}

\section{Characterization of weighted stable sets}
\label{sec:stable}
\setcounter{equation}{0}

In this section we consider the stability condition in Euclidean smooth open sets with smooth densities. By a \emph{weighted stable set} we mean a second order minimum of the weighted perimeter functional for compactly supported variations preserving the weighted volume. Clearly a weighted isoperimetric region is a weighted stable set. As we aim to study the weighted minimizers by means of the stability condition, we can restrict ourselves, by Theorems~\ref{th:reg} and~\ref{th:nullcap}, to stable sets of \emph{finite weighted perimeter} and whose interior boundary coincides, up to a closed set of vanishing area, with a smooth hypersurface of \emph{null weighted capacity}. Under these conditions we will be able to obtain a stability inequality, that we use to classify weighted stable sets in half-spaces and slabs of $\rrn$ endowed with a log-concave perturbation of the Gaussian density as in \eqref{eq:perturb}.

\subsection{Stability inequalities}
\label{subsec:stable1}
\noindent

Let us first introduce some notation and recall the basic variational properties of stable sets. Consider a smooth open set $\Om$ in $\rrn$ endowed with a density $f=e^\psi$ smooth on $\barOm$. Take an open set $E\sub\Om$ whose interior boundary $\overline{\ptl E\cap\Om}$ is the disjoint union of a smooth embedded hypersurface $\Sg$ and a closed singular set $\Sg_0$ with $A_f(\Sg_0)=0$. Note that $\Sg$ is a closed hypersurface of $\rrn$ when $\Sg_0=\emptyset$. If $\Sg$ has non-empty boundary $\ptl\Sg$, then we assume $\ptl\Sg=\Sg\cap\ptl\Om$, which prevents the existence in $\ptl\Om$ of interior points of $\Sg$. If $\ptl\Sg=\emptyset$ then we adopt the convention that all the integrals along $\ptl\Sg$ vanish. We denote by $N$ the inner unit normal of $\Sg$, and by $\nu$ the conormal vector of $\ptl\Sg$, i.e., the inner unit normal along $\ptl\Sg$ in $\Sg$.

Let $X$ be a smooth vector field on $\rrn$ with compact support on $\overline{\Om}$ and tangent along $\ptl\Om$. The one-parameter group of diffeomorphisms $\{\phi_s\}_{s\in\rr}$ of $X$ allows to define a \emph{variation} of $E$ by $E_s:=\phi_s(E)$. The associated volume and perimeter functionals are given by $V_f(s):=V_f(E_s)$ and $P_f(s):=P_f(E_s,\Om)$, respectively. The variation is said to be \emph{volume-preserving} if $V_f(s)=V_f(E)$ for any $s$ small enough. We say that $E$ is \emph{weighted stationary} if $P_f'(0)=0$ for any volume-preserving variation. We say that $E$ is \emph{weighted stable} if it is weighted stationary and $P_f''(0)\geq 0$ for any volume-preserving variation. If we denote $\Sg_s:=\phi_s(\Sg)$, then $\ptl\Sg_s=\Sg_s\cap\ptl\Om$ and we know that $P_f(s)=A_f(\Sg_s)$. Thus, if $E$ is weighted stable, then the hypersurface $\Sg$ is \emph{free boundary $f$-stable} in the sense defined by Castro and the author \cite[Sect.~3]{castro-rosales}, i.e., the area functional $A_f(s):=A_f(\Sg_s)$ satisfies $A_f'(0)=0$ and $A_f''(0)\geq 0$ for any volume-preserving variation. Indeed, if $\Sg_0=\emptyset$, then $E$ is weighted stable if and only if $\Sg$ is $f$-stable. By using the first and second variation formulas for $V_f(s)$ and $A_f(s)$ we deduce the following result, see \cite[Sect.~3]{castro-rosales} for details.

\begin{lemma}
\label{lem:varprop}
Let $\Om$ be a smooth open set of $\rrn$ endowed with a density $f=e^\psi$ smooth on $\barOm$. Consider an open set $E\sub\Om$ such that $\overline{\ptl E\cap\Om}=\Sg\cup\Sg_0$, where $\Sg$ is a smooth hypersurface with boundary $\ptl\Sg=\Sg\cap\ptl\Om$, and $\Sg_0$ is a closed singular set with $A_f(\Sg_0)=0$.
\begin{itemize}
\item[(i)] If $E$ is weighted stationary, then $\Sg$ has constant $f$-mean curvature $($defined in \eqref{eq:fmc}$)$ with respect to the inner unit normal $N$, and $\Sg$ meets $\ptl\Om$ orthogonally along $\ptl\Sg$.
\item[(ii)] If $E$ is weighted stable, then the $f$-index form of $\Sg$ $($defined below$)$ satisfies $\indo_f(u,u)\geq 0$ for any $u\in C^\infty_0(\Sg)$ with $\int_\Sg u\,da_f=0$.
\end{itemize}
Moreover, if $\Sg_0=\emptyset$, then the reverse statements in \emph{(i)} and \emph{(ii)} hold.
\end{lemma}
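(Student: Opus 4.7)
The plan is to pass from the sets $E_s:=\phi_s(E)$ to the smooth hypersurfaces $\Sg_s:=\phi_s(\Sg)$ and then invoke the first and second variation formulas for weighted area with free boundary already worked out by Castro and the author in \cite{castro-rosales}. The key bookkeeping observation is that $\phi_s$ is a diffeomorphism of $\barOm$, so its Jacobian is bounded and $\phi_s(\Sg_0)$ still has vanishing $n$-dimensional Hausdorff measure; consequently, for every small $s$ the decomposition $\overline{\ptl E_s\cap\Om}=\Sg_s\cup\phi_s(\Sg_0)$ presents $E_s$ as a set with almost smooth interior boundary, and the discussion in Section~\ref{subsec:perimeter} gives $P_f(s)=A_f(\Sg_s)$. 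Therefore $P_f(s)$ and $V_f(s)$ coincide with the weighted area and weighted volume functionals of the smooth variation $\Sg_s$ of $\Sg$ with boundary sliding along $\ptl\Om$.

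For the forward direction of (i), I would apply the first variation formula
\[
A_f'(0)=-\int_\Sg H_f\,u\,da_f-\int_{\ptl\Sg}\escpr{X,\nu}\,dl_f,\qquad u:=\escpr{X,N},
\]
together with the volume constraint $V_f'(0)=\int_\Sg u\,da_f=0$. Testing first with variations compactly supported in the interior of $\Sg$ (so the boundary term vanishes) and varying $u$ over zero-mean functions yields, by the standard Lagrange multiplier argument, a constant $\la\in\rr$ with $H_f\equiv\la$ on $\Sg$. Once $H_f$ is constant the bulk integral becomes $-\la\int_\Sg u\,da_f=0$ on every volume-preserving variation, so stationarity reduces to $\int_{\ptl\Sg}\escpr{X,\nu}\,dl_f=0$ for every $X$ tangent to $\ptl\Om$; since $X|_{\ptl\Sg}$ may realize any section of $T(\ptl\Om)|_{\ptl\Sg}$, this forces $\nu$ to be orthogonal to $T(\ptl\Om)$ along $\ptl\Sg$, i.e., $\Sg$ meets $\ptl\Om$ perpendicularly.

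For (ii), the second variation formula from \cite{castro-rosales} gives $P_f''(0)=\indo_f(u,u)$ on a stationary $E$ along any volume-preserving variation, where $u=\escpr{X,N}\in C_0^\infty(\Sg)$ automatically satisfies $\int_\Sg u\,da_f=0$. Weighted stability thus forces $\indo_f(u,u)\geq 0$ whenever $u$ arises from such a variation, and the point is to realize \emph{every} $u\in C_0^\infty(\Sg)$ of zero weighted mean in this way. Since $\text{supp}(u)$ is a compact subset of the regular part $\Sg$, it is disjoint from $\Sg_0$; hence $uN$ extends to a compactly supported vector field $X_1$ in a tubular neighbourhood of $\text{supp}(u)$ inside $\barOm$, tangent to $\ptl\Om$ (using the orthogonality already established in (i) to bend $N$ into $\ptl\Om$ near $\ptl\Sg$). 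A classical correction $\tau(s)X_0$, with $X_0$ an auxiliary vector field satisfying $\int_\Sg\escpr{X_0,N}\,da_f\neq 0$ and $\tau$ obtained from the implicit function theorem, turns $X_1$ into a genuinely volume-preserving variation realizing the prescribed $u$ at $s=0$, in the spirit of Barbosa, do Carmo and Eschenburg~\cite{bdce}. The main obstacle throughout is precisely this bookkeeping around $\Sg_0$: it must be kept in the picture when computing $P_f(s)$ so that the identification with $A_f(\Sg_s)$ is legitimate, yet it does not interfere with the test-function constructions because the supports of admissible $u$ never touch it.

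Finally, when $\Sg_0=\emptyset$ the converses are immediate from the two variation formulas. If $H_f$ is constant and $\Sg\perp\ptl\Om$, then $P_f'(0)=-H_f\int_\Sg u\,da_f=0$ on every volume-preserving variation, so $E$ is weighted stationary; if in addition $\indo_f(u,u)\geq 0$ for every $u\in C_0^\infty(\Sg)$ with $\int_\Sg u\,da_f=0$, then $P_f''(0)\geq 0$ on every such variation, and $E$ is weighted stable.
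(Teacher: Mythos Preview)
Your proposal is correct and follows essentially the same route as the paper, which simply refers to the first and second variation formulas in \cite[Sect.~3]{castro-rosales} after noting that $P_f(s)=A_f(\Sg_s)$ because $\phi_s(\Sg_0)$ still has null weighted area. You have filled in the standard Lagrange multiplier and Barbosa--do Carmo--Eschenburg steps that the paper leaves implicit in that citation, and your handling of the bookkeeping around $\Sg_0$ (support of $u$ disjoint from $\Sg_0$, orthogonality from (i) used to extend $uN$ tangentially along $\ptl\Om$) is exactly what is needed.
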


The \emph{$f$-index form} of $\Sg$ is the symmetric bilinear form on $C_0^\infty(\Sg)$ given by
\begin{equation}
\label{eq:index1}
\indo_f(u,v):=\int_\Sg\left\{\escpr{\nabla_\Sg u,\nabla_\Sg v}-\big(\ric_f(N,N)+|\sigma|^{2}\big)\,uv\right\}da_{f}
-\int_{\ptl\Sg}\text{II}(N,N)\,uv\,dl_f,
\end{equation}
where $\ric_f$ denotes the Bakry-\'Emery-Ricci tensor defined in \eqref{eq:fricci},  $|\sg|^2$ is the squared norm of the second fundamental form of $\Sg$, and $\text{II}$ is the second fundamental form of $\ptl\Om$ with respect to the inner unit normal. From the integration by parts formula in \eqref{eq:ibp}, we get 
\[
\mathcal{I}_f(u,v)=\mathcal{Q}_f(u,v), \ \text{ for any } u,v\in C^\infty_0(\Sg),
\]
where 
\begin{equation}
\label{eq:index2}
\mathcal{Q}_f(u,v):=-\int_\Sg u\,\mathcal{L}_f(v)\,da_f
-\int_{\ptl\Sg}u\,\left\{\frac{\ptl v}{\ptl\nu}+\text{II}(N,N)\,v\right\}dl_f.
\end{equation}
In the previous equation, $\mathcal{L}_f$ is the \emph{$f$-Jacobi operator of $\Sg$}, i.e., the second order linear operator given by
\begin{equation}
\label{eq:jacobi}
\mathcal{L}_f(v):=\Delta_{\Sg,f}\,v+\left(\text{Ric}_f(N,N)+|\sg|^2\right)v,
\end{equation}
where $\Delta_{\Sg,f}$ is the $f$-Laplacian relative to $\Sg$ in \eqref{eq:deltaf}. It is important to recall that $\mathcal{L}_f(v)$ coincides with the derivative of the $f$-mean curvature along a variation whose velocity vector $X_p:=(d/ds)|_{s=0}\,\phi_s(p)$ satisfies $\escpr{X,N}=v$ along $\Sg$, see \cite[Eq.~(3.5)]{castro-rosales}. This means that
\begin{equation}
\label{eq:hfprima}
(\mathcal{L}_f (v))(p)=\frac{d}{ds}\bigg|_{s=0}(H_f)_s(\phi_s(p)), 
\ \text{ for any } p\in\Sg,
\end{equation}
where $(H_f)_s$ denotes the $f$-mean curvature along the hypersurface $\Sg_s$. If we further assume that $\Sg$ has null weighted capacity, then the integration by parts formula in Lemma~\ref{lem:ibpext} and the symmetry of the $f$-index form $\indo_f$ allow us to prove the following: for any two bounded functions $u_i\in C^\infty(\Sg)\cap H^1(\Sg,da_f)$ such that $\Delta_{\Sg,f}u_i\in L^1(\Sg,da_f)$ and $\ptl u_i/\ptl\nu\in L^1(\ptl\Sg,dl_f)$, we have $\ind_f(u_1,u_2)=\ind_f(u_2,u_1)$. This is equivalent to the identity
\begin{equation}
\label{eq:symmetry}
\int_\Sg \left\{u_1\,\mathcal{L}_f(u_2)-u_2\,\mathcal{L}_f(u_1)\right\}da_f
=\int_{\ptl\Sg}\left\{u_2\,\frac{\ptl u_1}{\ptl\nu}-u_1\,\frac{\ptl u_2}{\ptl\nu}\right\}dl_f,
\end{equation} 
that will be used in the proof of Theorem~\ref{th:stable}.

Observe that the stability inequality in Lemma~\ref{lem:varprop} (ii) is valid for mean zero functions \emph{with compact support on} $\Sg$. Geometrically these functions come from volume-preserving variations of $E$ fixing a neighborhood of the singular set $\Sg_0$. Note also that the stability inequality becomes more restrictive provided the Bakry-\'Emery-Ricci curvature $\ric_f$ is nonnegative and $\Om$ is convex. By assuming these hypotheses we obtain below more general inequalities for mean zero functions satisfying certain integrability conditions. Their proofs rely on approximation arguments, that can be carried out when $\Sg$ has null weighted capacity.

\begin{proposition}[Generalized stability inequalities]
\label{prop:extended}
Let $\Om$ be a smooth convex domain of $\rrn$ endowed with a density $f=e^\psi$, smooth on $\barOm$, and satisfying $\emph{Ric}_f\geq 0$.  Consider an open set $E\sub\Om$ of finite weighted perimeter in $\Om$ such that $\overline{\ptl E\cap\Om}=\Sg\cup\Sg_0$, where $\Sg$ is a smooth hypersurface of null weighted capacity with boundary $\ptl\Sg=\Sg\cap\ptl\Om$, and $\Sg_0$ is a closed singular set with $A_f(\Sg_0)=0$. If $E$ is weighted stable, then
\begin{itemize}
\item[(i)] For any bounded function $u\in H^1(\Sg,da_f)$ with $\int_\Sg u\,da_f=0$, we have $(\emph{Ric}_f(N,N)+|\sg|^2)\,u^2\in L^1(\Sg,da_f)$, $\emph{II}(N,N)\,u^2\in L^1(\ptl\Sg,dl_f)$, and 
\[
\indo_f(u,u)\geq 0,
\]
where $\indo_f$ is the $f$-index form defined in \eqref{eq:index1}. Moreover, if $\Sg_0=\emptyset$, then the same holds without assuming that $u$ is bounded.
\item[(ii)] For any bounded function $u\in C^\infty(\Sg)\cap H^1(\Sg,da_f)$ with $\Delta_{\Sg,f}u\in L^1(\Sg,da_f)$, $\ptl u/\ptl\nu\in L^1(\ptl\Sg,dl_f)$ and $\int_\Sg u\,da_f=0$, we have $(\emph{Ric}_f(N,N)+|\sg|^2)\,u^2\in L^1(\Sg,da_f)$, $\emph{II}(N,N)\,u^2\in L^1(\ptl\Sg,dl_f)$, and 
\[
\ind_f(u,u)\geq 0,
\]
where $\ind_f$ is the $f$-index form defined in \eqref{eq:index2}. 
\end{itemize}
Furthermore, it also holds $\emph{Ric}_f(N,N)+|\sg|^2\in L^1(\Sg,da_f)$ and $\emph{II}(N,N)\in L^1(\ptl\Sg,dl_f)$.
\end{proposition}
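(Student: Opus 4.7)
My plan is to use the null weighted capacity of $\Sg$ to bootstrap the stability inequality from $C^\infty_0$ test functions (Lemma~\ref{lem:varprop}(ii)) to the broader classes claimed in (i) and (ii). The crucial observation is that, since $\ric_f\geq 0$, $\Om$ is convex (so $\text{II}(N,N)\geq 0$ along $\ptl\Sg$) and $|\sg|^2\geq 0$, each potential term appearing in $\indo_f$ has a definite sign; Fatou's lemma will then convert uniform gradient-energy bounds into $L^1$ control on the unknown quantities. Throughout I work with the approximating sequence $\{\var_k\}_{k\in\nn}\sub C^\infty_0(\Sg)$ furnished by Lemma~\ref{lem:capacity}, together with a fixed reference function $\eta\in C^\infty_0(\Sg)$ normalized by $\int_\Sg\eta\,da_f=1$; such $\eta$ exists because $A_f(\Sg)=P_f(E,\Om)$ is finite and positive by Remark~\ref{re:isop}.

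I would first establish the final integrability assertion. Testing the stability inequality against $v_k:=\var_k-c_k\eta$, where $c_k:=\int_\Sg\var_k\,da_f$, produces a function in $C^\infty_0(\Sg)$ of mean zero, so $\indo_f(v_k,v_k)\geq 0$ rearranges to
\[
\int_\Sg\big(\ric_f(N,N)+|\sg|^2\big)\,v_k^2\,da_f+\int_{\ptl\Sg}\text{II}(N,N)\,v_k^2\,dl_f\leq\int_\Sg|\nabla_\Sg v_k|^2\,da_f.
\]
Expanding the square and using $\int_\Sg|\nabla_\Sg\var_k|^2\,da_f\to 0$ with $c_k\to A_f(\Sg)$ (dominated convergence), the right-hand side stays uniformly bounded in $k$. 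On the left, $v_k\to 1-A_f(\Sg)\eta$ pointwise and all three integrands are nonnegative, so Fatou's lemma bounds the weighted integral of $(\ric_f(N,N)+|\sg|^2)(1-A_f(\Sg)\eta)^2$; since $\eta$ has compact support and all geometric quantities are smooth there, we conclude $\ric_f(N,N)+|\sg|^2\in L^1(\Sg,da_f)$ and similarly $\text{II}(N,N)\in L^1(\ptl\Sg,dl_f)$.

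For (i), given a bounded $u\in H^1(\Sg,da_f)$ with $\int_\Sg u\,da_f=0$, I set $u_k:=\var_k\,u-c_k\eta$ with $c_k:=\int_\Sg\var_k\,u\,da_f$; then $u_k\in H^1_0(\Sg,da_f)$ has compact support and mean zero, and $c_k\to 0$ by dominated convergence. Extending Lemma~\ref{lem:varprop}(ii) from $C^\infty_0$ to compactly supported $H^1_0$ functions by density (coefficients are bounded on the support) gives $\indo_f(u_k,u_k)\geq 0$. Passing to the limit, the bound $\int_\Sg u^2|\nabla_\Sg\var_k|^2\,da_f\leq\|u\|_\infty^2\int_\Sg|\nabla_\Sg\var_k|^2\,da_f\to 0$ together with Cauchy--Schwarz handles the Dirichlet energy, while the $L^1$ bounds just obtained combined with $|u_k|\leq\|u\|_\infty+|c_k|\,\|\eta\|_\infty$ allow dominated convergence in the potential terms, yielding $\indo_f(u,u)\geq 0$. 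When $\Sg_0=\emptyset$, $\Sg$ is complete of finite weighted area, so I would replace $\{\var_k\}$ by the Lipschitz sequence from Example~\ref{ex:finite} with uniform bound $|\nabla_\Sg\var_k|\leq\alpha/k$; then $\int_\Sg u^2|\nabla_\Sg\var_k|^2\,da_f\leq(\alpha/k)^2\int_\Sg u^2\,da_f\to 0$ without needing $u$ bounded. Part (ii) reduces at once to (i) via the generalized integration by parts in Lemma~\ref{lem:ibpext}, whose hypotheses match those of (ii) and which converts $\indo_f(u,u)$ into $\ind_f(u,u)$.

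The delicate point throughout is preserving the mean-zero constraint on the approximants, since naive truncation by $\var_k$ destroys it; subtracting $c_k\eta$ repairs this, but one must verify that the correction vanishes in the limit while keeping the support compact. A related subtlety is the apparent circularity in the integrability step: $L^1$ bounds on $\ric_f(N,N)+|\sg|^2$ are needed to pass to the limit, yet those bounds are the very output. Pointwise nonnegativity of each term, together with Fatou's lemma, breaks the circle—so the curvature and convexity hypotheses enter essentially and not as a mere technical convenience.
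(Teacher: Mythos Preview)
Your proof is correct and follows essentially the same approach as the paper: null-capacity cutoffs $\var_k$ multiplied against the target function, a fixed reference $\eta\in C^\infty_0(\Sg)$ subtracted off to restore the mean-zero constraint, and the sign conditions $\ric_f\geq 0$, $\text{II}(N,N)\geq 0$ to pass to the limit. The only cosmetic difference is the order of steps: you establish the global $L^1$ bounds on $\ric_f(N,N)+|\sg|^2$ and $\text{II}(N,N)$ first (via $v_k=\var_k-c_k\eta$) and then invoke dominated convergence in (i), whereas the paper applies Fatou directly in (i) and derives the global bounds afterward from a test function equal to $1$ off a small geodesic ball.
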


\begin{proof}
We follow the proof given by Ritor\'e and the author for bounded stable sets in convex solid cones with constant densities,  see \cite[Lems.~4.5 and 4.7]{cones}. 

Note that $A_f(\Sg)=P_f(E,\Om)<+\infty$ since $E$ has almost smooth interior boundary. As $\text{Cap}_f(\Sg)=0$, we can apply Lemma~\ref{lem:capacity} to find a sequence $\{\var_k\}_{k\in\nn}\sub C^\infty_0(\Sg)$ such that $0\leq\var_k\leq 1$, $\lim_{k\to\infty}\var_k(p)=1$ for any $p\in\Sg$, and $\lim_{k\to\infty}\int_\Sg|\nabla_\Sg\var_k|^2\,da_f=0$. Take a bounded function $u\in C^\infty(\Sg)\cap H^1(\Sg,da_f)$ with $\int_\Sg u\,da_f=0$. Define $u_k:=\var_k u-\alpha_k \varphi$, where $\var\in C^\infty_0(\Sg)$ is a function with $\int_\Sg\var\,da_f=1$ and $\alpha_k:=\int_\Sg\var_k u\,da_f$. This gives us a sequence $\{u_k\}_{k\in\nn}\sub C^\infty_0(\Sg)$ such that $\int_\Sg u_k\,da_f=0$ and $\lim_{k\to\infty}u_k(p)=u(p)$ for any $p\in\Sg$ (note that $\lim_{k\to\infty}\alpha_k=\int_\Sg u\,da_f=0$). Moreover, from the dominated convergence theorem and the Cauchy-Schwarz inequality, we obtain $\lim_{k\to\infty}\int_\Sg|\nabla_\Sg u_k|^2\,da_f=\int_\Sg|\nabla_\Sg u|^2\,da_f$. Since $E$ is weighted stable, we deduce from Lemma~\ref{lem:varprop} (ii) that $\indo_f(u_k,u_k)\geq 0$ for any $k\in\nn$. This means that
\[
\int_\Sg\big(\ric_f(N,N)+|\sg|^2\big)\,u_k^2\,da_f
+\int_{\ptl\Sg}\text{II}(N,N)\,u_k^2\,da_f\leq
\int_\Sg|\nabla_\Sg u_k|^2\,da_f
\] 
for any $k\in\nn$. As $\ric_f(N,N)\geq 0$ and $\text{II}(N,N)\geq 0$ (by convexity of $\Om$), we can apply Fatou's lemma and that $\int_\Sg|\nabla_\Sg u|^2\,da_f<+\infty$ to get $(\ric_f(N,N)+|\sg|^2)\,u^2\in L^1(\Sg,da_f)$, $\text{II}(N,N)\,u^2\in L^1(\ptl\Sg,dl_f)$ and $\indo_f(u,u)\geq 0$. Thus, we have shown that $\indo_f(u,u)\geq 0$, for any bounded function $u\in C^\infty(\Sg)\cap H^1(\Sg,da_f)$ with $\int_\Sg u\,da_f=0$. Moreover, if $\Sg_0=\emptyset$, then $\Sg$ is complete, and this allows to construct as in Example~\ref{ex:finite} a sequence $\{\var_k\}_{k\in\nn}\sub C^\infty_0(\Sg)$ satisfying also $|\nabla_\Sg\var_k|\leq\alpha/k$ in $\Sg$ for some constant $\alpha>0$. From this gradient estimate we can reproduce the previous arguments even if $u$ is not assumed to be bounded.  Statement (i) then follows from a standard regularization argument. If we further assume $\Delta_{\Sg,f}u\in L^1(\Sg,da_f)$ and $\ptl u/\ptl\nu\in L^1(\ptl\Sg,dl_f)$, then the integration by parts formula for hypersurfaces of null weighted capacity in Lemma~\ref{lem:ibpext} implies that $\ind_f(u,u)=\indo_f(u,u)\geq 0$. This proves (ii).

To finish the proof it suffices to see that $\ric_f(N,N)+|\sg|^2$ and $\text{II}(N,N)$ are integrable over the complement $\Sg-B$ of a small geodesic ball $B\sub\sub\Sg-\ptl\Sg$. As $A_f(\Sg)<+\infty$, we can take a bounded function $u\in C^\infty(\Sg)$ such that $\int_\Sg u\,da_f=0$ and $u=1$ in $\Sg-B$. From statement (i) we conclude that $(\ric_f(N,N)+|\sg|^2)\,u^2\in L^1(\Sg,da_f)$ and $\text{II}(N,N)\,u^2\in L^1(\ptl\Sg,dl_f)$. The claim follows by using that $u=1$ in $\Sg-B$.
\end{proof}

To finish this section we apply Lemma~\ref{lem:varprop} and Proposition~\ref{prop:extended} to analyze when Euclidean half-spaces are weighted stationary or stable for the perturbations of the Gaussian density defined in \eqref{eq:perturb}.

\begin{lemma}
\label{lem:half-spaces}
Consider an open half-space or slab $\Om:=\rr^n\times (a,b)$ endowed with the density $f=e^\psi$, where $\psi(p):=\omega(\pi(p))-c |p|^2$ for some function $\omega$ smooth in the closure of $(a,b)$ and some $c>0$. Let $E$ be the intersection with $\Om$ of an open half-space in $\rrn$ with interior boundary $\Sg:=\overline{\ptl E\cap\Om}$.
\begin{itemize}
\item[(i)] If $\Sg$ is either parallel or perpendicular to $\ptl\Om$, then $\Sg$ has constant $f$-mean curvature. Otherwise, $\Sg$ has constant $f$-mean curvature if and only if there are constants $\alpha,\beta\in\rr$ such that $\omega(t)=\alpha\,t+\beta$, for any $t\in (a,b)$.
\item[(ii)] $E$ is weighted stationary in $\Om$ if and only if $\Sg$ is parallel or perpendicular to $\ptl\Om$.
\item[(iii)] If $\Sg$ is parallel to $\ptl\Om$ with $\pi(p)=t_0$ in $\Sg$, then $E$ is weighted stable if and only if $\omega''(t_0)\geq 0$.
\end{itemize}
\end{lemma}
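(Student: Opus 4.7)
My plan is to treat the three statements in turn, with (i) as the computational core, (ii) a direct application of Lemma~\ref{lem:varprop}, and (iii) a stability analysis based on the Gaussian Poincar\'e inequality.

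For (i), since $\Sg$ is (a piece of) a hyperplane the Euclidean mean curvature vanishes, so by \eqref{eq:fmc} and \eqref{eq:riccperturb},
\[
H_f=-\escpr{\nabla\psi,N}=-\omega'(\pi(p))\,\escpr{\ptl_t,N}+2c\,\escpr{p,N}.
\]
Because $\Sg$ sits in a hyperplane with constant unit normal $N$, the quantity $d:=\escpr{p,N}$ is constant on $\Sg$. Set $\tau:=\escpr{\ptl_t,N}$, so that $H_f(p)=-\tau\,\omega'(\pi(p))+2cd$. If $\Sg$ is parallel to $\ptl\Om$ then $N=\pm\ptl_t$, hence $\pi(p)$ is constant and $H_f$ is constant. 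If $\Sg$ is perpendicular to $\ptl\Om$ then $\tau=0$ and $H_f=2cd$ is constant. In the remaining case $\tau\neq 0$, and since the corresponding hyperplane is not horizontal the projection $\pi$ is surjective from $\Sg$ onto the whole interval $(a,b)$; thus $H_f$ is constant if and only if $\omega'$ is constant on $(a,b)$, i.e.\ $\omega(t)=\alpha t+\beta$ on $(a,b)$.

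For (ii) I would invoke Lemma~\ref{lem:varprop}~(i), whose converse also applies here since the singular set is empty. Stationarity is equivalent to $H_f$ being constant together with the free boundary condition that $N$ be tangent to $\ptl\Om$ along $\ptl\Sg$; because $\ptl\Om$ has normal $\pm\ptl_t$, this forces $\tau=0$, i.e.\ $\Sg\perp\ptl\Om$. When $\ptl\Sg=\emptyset$ the orthogonality condition is vacuous, but a hyperplane intersected with $\Om$ has empty boundary exactly when it is horizontal and lies in the interior of the slab, i.e.\ parallel to $\ptl\Om$. Combined with (i), this gives the equivalence claimed in (ii).

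For (iii), with $\Sg=\rr^n\times\{t_0\}$ one has $|\sg|^2=0$, $\ptl\Sg=\emptyset$, $N=\pm\ptl_t$, and from \eqref{eq:riccperturb} the normal Bakry-\'Emery-Ricci curvature equals $2c-\omega''(t_0)$. Hence for $u\in C^\infty_0(\Sg)$ with $\int_\Sg u\,da_f=0$,
\[
\indo_f(u,u)=\int_\Sg|\nabla_\Sg u|^2\,da_f-(2c-\omega''(t_0))\int_\Sg u^2\,da_f.
\]
The weighted area element on $\Sg$ is the scalar multiple $e^{\omega(t_0)-ct_0^2}e^{-c|z|^2}dz$ of the Gaussian measure on $\rr^n$, whose sharp Poincar\'e constant is exactly $2c$ and is attained on the coordinate functions $z_i$. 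Applying the Gaussian Poincar\'e inequality yields $\indo_f(u,u)\geq\omega''(t_0)\int_\Sg u^2\,da_f$, proving stability when $\omega''(t_0)\geq 0$. For the converse, with $\omega''(t_0)<0$ I would test against $u_k(z):=z_1\,\var_k(|z|)$ for a sequence of radial cutoffs $\var_k\nearrow 1$; symmetry gives $\int_\Sg u_k\,da_f=0$, and using the Gaussian moments $\int z_1^2\,da_f=A_f(\Sg)/(2c)$ and $\int|\nabla u_k|^2\,da_f\to A_f(\Sg)$ one checks that $\indo_f(u_k,u_k)\to\omega''(t_0)A_f(\Sg)/(2c)<0$, contradicting stability. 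The main technical subtlety is this cutoff step: the sharp Gaussian eigenfunction $z_1$ is not compactly supported, so one must verify that the cross term $2z_1\var_k\,\ptl_1\var_k$ and the error term $z_1^2|\nabla\var_k|^2$ generated inside $|\nabla_\Sg u_k|^2$ become negligible in the limit, which follows from standard Gaussian moment estimates on the annular support of $\nabla\var_k$.
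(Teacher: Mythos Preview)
Your proof is correct and follows essentially the same route as the paper for (i), (ii), and the forward direction of (iii). For the converse of (iii) the paper avoids the explicit cutoff: since $\Sg_0=\emptyset$ and $\omega''(t_0)<0$ gives $\ric_f(N,N)\geq 0$ along $\Sg$, it simply invokes Proposition~\ref{prop:extended}~(i) (whose approximation argument has already been carried out) to test directly with the unbounded coordinate function $u=z_1$, obtaining $\indo_f(u,u)<0$---your hand-built cutoff $u_k=z_1\var_k$ is in effect reproving that special case.
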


\begin{proof}
Let $N$ be the inner unit normal to $\Sg$. By taking into account \eqref{eq:fmc} and the first equality in \eqref{eq:riccperturb}, we get
\[
H_f(p)=-\omega'(\pi(p))\,\escpr{\ptl_t,N_p}+2c\,\escpr{p,N_p}, \ \text{ for any }p\in\Sg,
\]
from which we deduce (i). Statement (ii) follows from (i) and the orthogonality condition in Lemma~\ref{lem:varprop} (i). 

Now we prove (iii). Take a function $u\in C^\infty_0(\Sg)$ with $0=\int_\Sg u\,da_f=e^{\omega(t_0)}\int_\Sg u\,da_c$, where $da_c$ denotes the weighted area measure associated to the Gaussian density $\ga_c(p):=e^{-c |p|^2}$. From the definition of $f$-index form in \eqref{eq:index1}, the second equality in \eqref{eq:riccperturb}, and the fact that $\pi(p)=t_0$ in $\Sg$, we obtain
\[
\indo_f(u,u)=e^{\omega(t_0)}\left [\int_\Sg\big(|\nabla_\Sg u|^2-2c u^2\big)\,da_c+\omega''(t_0)\int_\Sg u^2\,da_c\right].
\]
On the other hand, the Gaussian isoperimetric inequality implies that $\Sg$ bounds a Gaussian minimizer and, in particular, a weighted stable set for $\ga_c$. Therefore, we have the Poincar\'e type inequality
\[
\int_\Sg\big(|\nabla_\Sg u|^2-2c u^2\big)\,da_c\geq 0,
\]
and equality holds for any coordinate function $u$ on $\Sg$. Hence it is clear from Lemma~\ref{lem:varprop} (ii) that $E$ is weighted stable if $\omega''(t_0)\geq 0$. Conversely, suppose that $E$ is weighted stable and $\omega''(t_0)<0$. This gives us $\ric_f\geq 0$ on $\Sg$ by the second equality in \eqref{eq:riccperturb}. So,  we may apply the stability inequality in Proposition~\ref{prop:extended} (i) with a coordinate function $u$ in order to get $\indo_f(u,u)<0$, a contradiction. This proves the claim.
\end{proof}

\begin{remarks}
\label{re:sanasana}
1. The previous result is also valid when $\Om=\rrn$. In this case $\ptl\Om=\emptyset$ and $E$ is weighted stationary if and only if $\Sg$ has constant $f$-mean curvature. We stress that weighted stationary half-spaces which are neither horizontal nor vertical appear if and only if $\omega:\rr\to\rr$ is an affine function.

2. From Lemma~\ref{lem:half-spaces} (iii) we deduce that, if $\omega$ is strictly concave, then half-spaces parallel to $\ptl\Om$ are weighted unstable sets. In particular, they cannot be weighted minimizers.

3. Note that the weighted stability of half-spaces perpendicular to $\ptl\Om$ is not discussed in Lemma~\ref{lem:half-spaces}. When $\omega$ is concave we will show, as an immediate consequence of Theorem~\ref{th:isoperimetric}, that they are all weighted stable. 

4. Barthe, Bianchini and Colesanti have employed Poincar\'e type inequalities to study the weighted stability of half-spaces in $\rrn$ for a Euclidean measure $\mu^{n+1}$, where $\mu$ is a probability measure on $\rr$, see \cite[Sect.~3]{chiara}. In particular, they obtained stability of half-spaces in $\rrn$ with Gaussian density. Indeed, it is proved in \cite[Thm.~3.5]{chiara} that the weighted stability of coordinate half-spaces characterizes Gaussian type measures. By using similar arguments, Doan provides in \cite[Thm.~5.1]{calibrations} a stability criterion for horizontal half-spaces in $\rrn=\rr^n\times\rr$ with product density $f(z,t)=e^{\psi(z)+\delta(t)}$. In fact, a calibration argument shows that, if $\delta$ is constant or strictly convex, then any horizontal half-space is weighted area-minimizing, see \cite[Thm.~2.9]{cvm} and \cite[Cor.~5.3]{calibrations}.
\end{remarks}

\subsection{Characterization results}
\label{subsec:stable2}
\noindent

We now turn to the classification of weighted stable sets in a half-space or slab $\Om:=\rr^n\times (a,b)$ endowed with a smooth log-concave perturbation of the Gaussian density as in \eqref{eq:perturb}. For that, we will use the stability inequality in Proposition~\ref{prop:extended} (ii) with a suitable test function. Since $\ptl\Om$ is totally geodesic it is natural to produce such a function by using translations along $\ptl\Om$. However, as a difference with respect to the case of constant density, such translations need not preserve the enclosed volume. To solve this difficulty we consider variations of a given stable set $E$ by equidistant sets translated along $\ptl\Om$ to keep the weighted volume constant. The associated test functions along the interior boundary of $E$ are computed in the next lemma.   

\begin{lemma}[Test functions]
\label{lem:test}
Let $\Om$ be an open half-space or slab in $\rrn$ endowed with a density $f=e^\psi$ smooth on $\barOm$. Consider a bounded open set $E\sub\Om$ such that $\overline{\ptl E\cap\Om}$ is a smooth hypersurface $\Sg$ meeting orthogonally $\ptl\Om$ in the points of $\ptl\Sg=\Sg\cap\ptl\Om$. For any unit vector $\eta$ tangent to $\ptl\Om$ we consider the variation of $\Sg$ defined by
\[
\phi_s(p):=p+sN_p+\la(s)\eta, 
\]
where $\la(s)$ is a smooth function such that $\la(0)=0$ and the variation is volume-preserving. Then, the normal component of the associated velocity vector is the function
\[
v(p):=\frac{\alpha+\escpr{\eta,N_p}}{\alpha}, \ \text{ for any }p\in\Sg,
\]
where $\alpha:=-A_f(\Sg)^{-1}\,\int_\Sg\escpr{\eta,N_p}\,da_f$.
\end{lemma}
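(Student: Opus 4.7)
The plan is to compute the velocity vector of the variation $\phi_s$ at $s=0$, then impose the volume-preserving condition via the first variation formula to determine $\lambda'(0)$, and finally read off the normal component.

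First I would differentiate $\phi_s(p)=p+sN_p+\lambda(s)\eta$ with respect to $s$ at $s=0$ to obtain the velocity vector
\[
X_p=\frac{d}{ds}\bigg|_{s=0}\phi_s(p)=N_p+\lambda'(0)\,\eta,
\]
so that its normal component along $\Sg$ is
\[
v(p):=\escpr{X_p,N_p}=1+\lambda'(0)\,\escpr{\eta,N_p}.
\]
Since $E$ is bounded and $\Sg$ is a smooth embedded hypersurface with $\ptl\Sg=\Sg\cap\ptl\Om$, the first variation of weighted volume (a direct consequence of the weighted divergence theorem \eqref{eq:divdesi}, noting that $\eta$ is tangent to $\ptl\Om$, so $\phi_s$ preserves $\ptl\Om$) reads
\[
V_f'(0)=-\int_\Sg\escpr{X,N}\,da_f=-\int_\Sg\big(1+\lambda'(0)\,\escpr{\eta,N}\big)\,da_f.
\]

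Next I would enforce $V_f'(0)=0$, which gives
\[
A_f(\Sg)+\lambda'(0)\int_\Sg\escpr{\eta,N}\,da_f=0.
\]
Using the definition $\alpha:=-A_f(\Sg)^{-1}\int_\Sg\escpr{\eta,N}\,da_f$ (which is implicitly assumed nonzero so that $\lambda(s)$ can be determined via the implicit function theorem from the smooth equation $V_f(s)=V_f(0)$), this becomes
\[
A_f(\Sg)-\lambda'(0)\,\alpha\,A_f(\Sg)=0,
\]
so $\lambda'(0)=1/\alpha$. Substituting back into the formula for $v(p)$ yields
\[
v(p)=1+\frac{\escpr{\eta,N_p}}{\alpha}=\frac{\alpha+\escpr{\eta,N_p}}{\alpha},
\]
as claimed.

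I do not foresee a serious obstacle here: the computation is essentially the standard trick of correcting a geometric variation by a translation along an auxiliary direction to achieve volume preservation. The only mild subtlety is the well-posedness of $\lambda(s)$, which is guaranteed by the implicit function theorem applied to $s\mapsto V_f(s)$ as a function of $(s,\lambda)$, provided $\int_\Sg\escpr{\eta,N}\,da_f\neq 0$; this is precisely the nondegeneracy condition hidden in the definition of $\alpha$.
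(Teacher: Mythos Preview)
Your proof is correct and follows essentially the same approach as the paper: compute the velocity vector $X_p=N_p+\lambda'(0)\eta$, take its normal component, and use the first variation of weighted volume $V_f'(0)=-\int_\Sg v\,da_f=0$ to determine $\lambda'(0)=1/\alpha$. Your added remark on the implicit function theorem for the existence of $\lambda(s)$ is a welcome clarification not made explicit in the paper.
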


\begin{proof}
The associated velocity vector of $\{\phi_s\}_{s\in\rr}$ is $X_p:=(d/ds)|_{s=0}\,\phi_s(p)=N_p+\la'(0)\eta$, which is tangent to $\ptl\Om$ in the points of $\ptl\Sg$. Thus we have $v(p):=\escpr{X_p,N_p}=1+\la'(0)\,\escpr{\eta,N_p}$, for any $p\in\Sg$. Let $E_s\sub\Om$ be the bounded open set such that $\overline{\ptl E_s\cap\Om}=\Sg_s$. By using that the function $V_f(s):=V_f(E_s)$ is constant for $s$ small enough, and the first variation of weighted volume computed in \cite[Sect.~3]{castro-rosales}, we deduce
\[
0=V_f'(0)=-\int_\Sg v\,da_f=-A_f(\Sg)-\la'(0)\int_\Sg\escpr{\eta,N_p}\,da_f.
\]
Therefore $\la'(0)=\alpha^{-1}$ and the proof finishes.
\end{proof}

For a half-space or slab $\Om:=\rr^n\times (a,b)$ with a smooth density $f$ as in \eqref{eq:perturb} we discussed in Lemma~\ref{lem:half-spaces} when half-spaces intersected with $\Om$ are weighted stationary or stable. In particular, we showed that the stability of half-spaces parallel to $\ptl\Om$ becomes more restrictive when the function $\omega$ is concave. On the other hand, the concavity of $\omega$ is also equivalent by \eqref{eq:riccperturb} to the inequality $\ric_f\geq 2c$, which allows to deduce the stability inequalities in Proposition~\ref{prop:extended}. In the main result of this section we assume concavity of $\omega$ to characterize stable sets in $\Om$ of finite weighted perimeter and interior boundary of null weighted capacity.
 
\begin{theorem}[Stable sets in half-spaces and slabs]
\label{th:stable}
Consider an open half-space or slab $\Om:=\rr^n\times (a,b)$ endowed with the density $f=e^\psi$, where $\psi(p):=\omega(\pi(p))-c |p|^2$ for some concave function smooth in the closure of $(a,b)$ and some $c>0$. Let $E\sub\Om$ be an open set of finite weighted perimeter in $\Om$ such that $\overline{\ptl E\cap\Om}=\Sg\cup\Sg_0$, where $\Sg$ is a smooth hypersurface of null weighted capacity with boundary $\ptl\Sg=\Sg\cap\ptl\Om$, and $\Sg_0$ is a closed singular set with $A_f(\Sg_0)=0$. If $E$ is weighted stable, then $E$ is the intersection with $\Om$ of a half-space with boundary parallel or perpendicular to $\ptl\Om$. Moreover, if $\omega$ is strictly concave, only the latter case is possible.
\end{theorem}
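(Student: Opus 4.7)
My plan is to apply the generalized stability inequality of Proposition~\ref{prop:extended}(ii) to the test function $u = \alpha + h$ provided by Lemma~\ref{lem:test}, where $h := \escpr{\eta,N}$ for a unit vector $\eta$ tangent to $\ptl\Om$ and $\alpha := -A_f(\Sg)^{-1}\int_\Sg h\,da_f$. The concavity of $\omega$ combined with \eqref{eq:riccperturb} yields $\ric_f \geq 2c > 0$, so the proposition applies. Two ingredients are needed. First, the identity $\mathcal{L}_f h = 2c h$: formula \eqref{eq:hfprima} applied to the translation $\phi_s(p) := p + s\eta$ --- which preserves both $N$ and the Euclidean mean curvature --- reduces $\mathcal{L}_f h$ to $-\escpr{(\nabla^{2}\psi)\eta,N} = \ric_f(\eta,N)$; since $\eta\perp\ptl_t$, \eqref{eq:riccperturb} gives $\ric_f(\eta,N) = 2c h$. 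Second, the boundary contribution to $\indo_f(u,u)$ must vanish: $\ptl\Om$ is totally geodesic so $\mathrm{II}\equiv 0$, and the orthogonality $\Sg\perp\ptl\Om$ forces $\nu = \pm\ptl_t$ along $\ptl\Sg$; differentiating $\escpr{N,\ptl_t}=0$ along $\ptl\Sg$ then shows $\nu$ is a principal direction of $\Sg$, so $\ptl_\nu h = -\escpr{A_N\nu,\eta} = 0$ because $\eta\perp\nu$.

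With these in hand, Lemma~\ref{lem:ibpext} applies (the required integrabilities follow from $|h|\leq 1$, from $\ric_f(N,N) + |\sg|^{2}\in L^{1}(\Sg,da_f)$ given by Proposition~\ref{prop:extended}, and from $\text{Cap}_f(\Sg) = 0$), yielding
\[
\int_\Sg|\nabla_\Sg h|^{2}\,da_f = \int_\Sg (R-2c)h^{2}\,da_f, \qquad \int_\Sg Rh\,da_f = 2c\int_\Sg h\,da_f = -2c\alpha A_f(\Sg),
\]
where $R := \ric_f(N,N) + |\sg|^{2}\geq 2c$. Substituting these into $\indo_f(u,u) = \int_\Sg|\nabla_\Sg u|^{2}\,da_f - \int_\Sg Ru^{2}\,da_f$ with $u = \alpha + h$, the stability inequality $\indo_f(u,u)\geq 0$ reduces to
\[
\alpha^{2}\bigl(4cA_f(\Sg) - \textstyle\int_\Sg R\,da_f\bigr) \geq 2c\textstyle\int_\Sg h^{2}\,da_f.
\]
Cauchy--Schwarz applied to $1$ and $h$ gives $\alpha^{2}A_f(\Sg)\leq \int_\Sg h^{2}\,da_f$. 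Combining these, either $\alpha = 0$ (and then $h\equiv 0$, meaning $N\perp\eta$ on $\Sg$) or $\int_\Sg R\,da_f \leq 2cA_f(\Sg)$, which together with the pointwise bound $R\geq 2c$ forces $R\equiv 2c$; unpacking, this yields $|\sg|^{2}\equiv 0$ and $\omega''(\pi)\escpr{N,\ptl_t}^{2}\equiv 0$.

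Running this argument for each $\eta$ in an orthonormal basis of $T\ptl\Om$, I would conclude that either $N = \pm\ptl_t$ everywhere on $\Sg$ (so $\Sg$ is horizontal) or $\Sg$ is totally geodesic. In either case $\Sg$ is contained in a hyperplane $\Pi$, and because $\Sg_0\subseteq\overline{\Sg}\subseteq\Pi$ and $A_f(\Sg_0) = 0$, the whole interior boundary $\overline{\ptl E\cap\Om}$ sits in $\Pi$, forcing $E = H\cap\Om$ for an open half-space $H$ bounded by $\Pi$. Stability implies stationarity, so Lemma~\ref{lem:half-spaces}(ii) shows $\Pi$ is parallel or perpendicular to $\ptl\Om$; when $\omega$ is strictly concave, Lemma~\ref{lem:half-spaces}(iii) eliminates the parallel case. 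The two points I expect to be most delicate are (a) verifying all the integrability hypotheses needed to legitimately invoke Lemma~\ref{lem:ibpext} on the possibly non-compact $\Sg$, and (b) the passage from ``$\Sg$ lies in a hyperplane $\Pi$'' to ``$E = H\cap\Om$,'' which hinges on the topological smallness of $\Sg_0$ inside $\overline{\ptl E\cap\Om}$.
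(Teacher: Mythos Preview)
Your proposal follows essentially the same route as the paper: the test function $u=\alpha+h$ with $h=\escpr{\eta,N}$, the eigenfunction identity $\mathcal{L}_f h=2ch$, the vanishing of the boundary term via $\nu\perp\eta$, and the combination of the stability inequality with Cauchy--Schwarz to force $\Sg$ into a hyperplane. Your algebra is a mild repackaging of the paper's (you expand $\indo_f(u,u)$ directly, whereas the paper works with $\ind_f$ and the symmetry identity \eqref{eq:symmetry}), but the conclusions match.

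There is one genuine omission. You pass from ``$|\sg|^2\equiv 0$'' or ``$N=\pm\ptl_t$'' to ``$\Sg$ lies in a single hyperplane $\Pi$'' without establishing that $\Sg$ is \emph{connected}. A disconnected totally geodesic $\Sg$ could have components in several distinct hyperplanes. The paper dispatches this at the outset: if $\Sg$ had two components, a locally constant mean-zero $u$ would give $\indo_f(u,u)<0$ (since $\ric_f(N,N)\geq 2c>0$), contradicting Proposition~\ref{prop:extended}(i). You should insert this step before concluding $\Sg\subset\Pi$.

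On the second delicate point you flag: the paper does not argue via $\Sg_0\subset\overline{\Sg}$ (which is not granted by the hypotheses). Instead, once $N$ is known to be constant on $\Sg$, it extends continuously to $\overline{\Sg}=\overline{\ptl E\cap\Om}$, and Lemma~\ref{lem:giusti} then shows $\ptl E\cap\Om$ is $C^1$; hence $\Sg_0=\emptyset$, $\Sg$ is closed and connected, and therefore equals the full hyperplane slice $\Pi\cap\Om$. This is the clean way to close the argument.
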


\begin{proof}
Let $N$ be the inner unit normal to $\Sg$. From Lemma~\ref{lem:varprop} (i) we know that $\Sg$ has constant $f$-mean curvature and meets $\ptl\Om$ orthogonally in the points of $\ptl\Sg$. Hence, along $\ptl\Sg$, the conormal vector $\nu$ coincides with the inner unit normal to $\ptl\Om$. Note that $\ric_f\geq 0$ by the second equality in \eqref{eq:riccperturb}. Hence the weighted stability of $E$ implies that $\Sg$ is connected: otherwise, we would deduce $\indo(u,u)<0$ for some locally constant function $u$, which contradicts Proposition~\ref{prop:extended} (i).

For a fixed unit horizontal vector $\eta$, we define the vector field $X_p:=\eta$ with associated one-parameter group of translations $\tau_s(p):=p+s\eta$. Let $\Sg_s:=\tau_s(\Sg)$, and denote by $(H_f)_s$ the $f$-mean curvature of $\Sg_s$. By using \eqref{eq:fmc}, \eqref{eq:riccperturb}, equality $\pi(\tau_s(p))=\pi(p)$, and that the unit normal $N_s$ to $\Sg_s$ and the Euclidean mean curvature $H_s$ of $\Sg_s$ are invariant under translations, we get
\begin{align*}
(H_f)_s(\tau_s(p))&=\big(nH_s-\escpr{\nabla\psi,N_s}\big)(\tau_s(p))
\\
&=nH(p)-\omega'(\pi(p))\,\escpr{\ptl_t,N_p}+2c\,\big(\escpr{p,N_p}+s\,\escpr{\eta,N_p}\big).
\end{align*}
Hence, if we define $h:\Sg\to\rr$ by $h(p):=\escpr{X_p,N_p}=\escpr{\eta,N_p}$, then equations \eqref{eq:jacobi} and \eqref{eq:hfprima} give us
\begin{equation}
\label{eq:eigen}
\Delta_{\Sg,f}h+(\ric_f(N,N)+|\sg|^2)\,h=\mathcal{L}_f (h)=\frac{d}{ds}\bigg|_{s=0}(H_f)_s(\tau_s(p))=2c h,
\end{equation}
where $\Delta_{\Sg,f}$ is the $f$-Laplacian relative to $\Sg$ and $\sg$ is the second fundamental form of $\Sg$.  Formula~\eqref{eq:eigen} shows that $h$ is an eigenfunction for the $f$-Jacobi operator on $\Sg$.

Observe that $h\in L^1(\Sg,da_f)$ since $h$ is bounded and $A_f(\Sg)=P_f(E,\Om)<+\infty$. Thus, we can define the test function
\begin{equation*}
u:=\alpha+h=\alpha\,v,
\end{equation*} 
where  $\alpha:=-A_f(\Sg)^{-1}\int_\Sg h\,da_f$ and $v$ is the function in Lemma~\ref{lem:test}. Note that $u$ is a bounded smooth function on $\Sg$ with $\int_\Sg u\,da_f=0$. Let us see that $u$ satisfies the integrability hypotheses in Proposition~\ref{prop:extended} (ii). From the definition of $h$ and equation \eqref{eq:eigen}, we obtain
\begin{align}
\label{eq:mika1}
(\nabla_\Sg u)_p&=(\nabla_\Sg h)_p=-\sum_{i=1}^nk_i(p)\,\escpr{\eta,e_i}\,e_i,
\\ 
\nonumber
\Delta_{\Sg,f} u&=\Delta_{\Sg,f}h=2c h-(\ric_f(N,N)+|\sg|^2)\,h,
\end{align}
where $e_i$ is a principal direction of $\Sg$ at $p$ with principal curvature $k_i(p)$. Note that $|\nabla_\Sg u|^2\leq |\sg|^2\leq \ric_f(N,N)+|\sg|^2$, which is an integrable function by Proposition~\ref{prop:extended}. Hence, it follows that $u\in H^1(\Sg,da_f)$ and $\Delta_{\Sg,f}u\in L^1(\Sg,da_f)$. Now, we compute $\ptl u/\ptl\nu$ along $\ptl\Sg$. Let $D$ be the Levi-Civit\`a connection in $\rrn$. For any vector $T$ tangent to $\ptl\Sg$, we have
\[
-\escpr{D_\nu N,T}=\sigma(\nu,T)=\sigma(T,\nu)=-\escpr{D_T N,\nu}
=\escpr{N,D_T\nu}=0,
\]
since $\ptl\Om$ is totally geodesic and $\nu$ coincides with the inner unit normal to $\ptl\Om$. The previous computation shows that $D_\nu N$ is proportional to $\nu$ in the points of $\ptl\Sg$. By taking an orthonormal basis $\{e_1,\ldots, e_n\}$ of principal directions at $p\in\ptl\Sg$ with $e_n=\nu_p$, and having in mind \eqref{eq:mika1}, we deduce
\begin{equation}
\label{eq:bdterm}
\frac{\ptl u}{\ptl\nu}\,(p)=\frac{\ptl h}{\ptl\nu}\,(p)
=-k_n(p)\,\escpr{\eta,\nu_p}=0,
\end{equation}
since $\eta$ is tangent to $\ptl\Om$ and $\nu_p$ is normal to $\ptl\Om$ at $p$. 

At this point, we can apply Proposition~\ref{prop:extended} (ii) to ensure that $\ind_f(u,u)\geq 0$, where $\ind_f$ is the $f$-index form of $\Sg$ defined in \eqref{eq:index2}. Observe that the boundary term in $\ind_f(u,u)$ vanishes since $\ptl u/\ptl\nu=\text{II}(N,N)=0$ along $\ptl\Sg$. By using \eqref{eq:eigen}, the second equality in \eqref{eq:riccperturb}, and that $\int_\Sg u\,da_f=0$, we have
\begin{align}
\label{eq:memo}
0\leq\ind_f(u,u)&=-\int_\Sg u\,\mathcal{L}_f (u)\,da_f=-\int_\Sg (\alpha+h)\,\mathcal{L}_f(\alpha+h)\,da_f
\\
\nonumber
&=-\int_\Sg (\alpha+h)\,\{\alpha\,\big(2c-\omega''(\pi(p))\,\escpr{\ptl_t,N_p}^2+|\sg|^2\big)+2c h\}\,da_f
\\
\nonumber
&=-\int_\Sg (\alpha+h)\,\{\alpha\,\big(|\sg|^2-\omega''(\pi(p))\,\escpr{\ptl_t,N_p}^2\big)+2c h\}\,da_f.
\end{align}
On the other hand, equalities \eqref{eq:eigen}, \eqref{eq:symmetry} and \eqref{eq:bdterm} give us
\begin{align*}
\int_\Sg 2c h\,da_f&=\int_\Sg\mathcal{L}_f(h)\,da_f
=\int_\Sg h\,\mathcal{L}_f(1)\,da_f
\\
&=\int_\Sg h\,\big(2c-\omega''(\pi(p))\,\escpr{\ptl_t,N_p}^2+|\sg|^2\big)\,da_f,
\end{align*}
and so
\[ 
\int_\Sg h\,\big(|\sg|^2-\omega''(\pi(p))\,\escpr{\ptl_t,N_p}^2\big)\,da_f=0. 
\]
Plugging this into \eqref{eq:memo}, and taking into account that $\alpha=-A_f(\Sg)^{-1}\int_\Sg h\,da_f$, we get
\begin{align*}
0&\leq-\,\int_\Sg \alpha^2\,\big(|\sg|^2-\omega''(\pi(p))\,\escpr{\ptl_t,N_p}^2\big)\,da_f
\\
&+2c\left[\frac{1}{A_f(\Sg)}\left(\int_\Sg h\,da_f\right)^2-\int_\Sg h^2\,da_f\right]\leq 0,
\end{align*}
by the concavity of $\omega$ and the Cauchy-Schwarz inequality in $L^2(\Sg,da_f)$. As a consequence, we can ensure that $\alpha^2\,(|\sg|^2-\omega''(\pi(p))\,\escpr{\ptl_t,N_p}^2)=0$ on $\Sg$ and $\escpr{\eta,N_p}=h(p)=-\alpha$, for any $p\in\Sg$ and any unit horizontal vector $\eta$. If $\alpha=0$ for any $\eta$, then $\Sg$ is contained in a hyperplane parallel to $\ptl\Om$. If $\alpha\neq 0$ for some $\eta$, then $|\sg|^2=0$ and so $\Sg$ is contained in a hyperplane transversal to $\ptl\Om$. In both cases, the unit normal $N$ to $\Sg$ extends continuously to $\overline{\Sg}$. This implies by Lemma~\ref{lem:giusti} that $\ptl E\cap\Om$ is a $C^1$ hypersurface since the generalized unit normal $N_f$ in \eqref{eq:gg2} equals $N$ along $\Sg$. In particular $\Sg_0=\emptyset$. As $\Sg$ is closed and connected, we deduce that $\Sg$ is a hyperplane intersected with $\Om$. From the orthogonality condition between $\Sg$ and $\ptl\Om$ in Lemma~\ref{lem:varprop} (i) we conclude that $\Sg$ is perpendicular to $\ptl\Om$ if $\ptl\Sg\neq\emptyset$. Otherwise, $\Sg$ is parallel to $\ptl\Om$. Moreover, if $\omega$ is strictly concave, then $\Sg$ must be perpendicular to $\ptl\Om$ by Lemma~\ref{lem:half-spaces} (iii). This completes the proof of the theorem.
\end{proof}

As a particular case of Theorem~\ref{th:stable} we obtain the following result for the Gaussian density, which is interesting in itself.

\begin{corollary}[Stable sets in Gaussian half-spaces and slabs]
\label{cor:stableGausshalf}
Let $\Om$ be an open half-space or slab in $\rrn$ with Gaussian density $\ga_c(p):=e^{-c |p|^2}$, $c>0$. Consider an open set $E\sub\Om$ of finite weighted perimeter in $\Om$ such that $\overline{\ptl E\cap\Om}=\Sg\cup\Sg_0$, where $\Sg$ is a smooth hypersurface of null weighted capacity with boundary $\ptl\Sg=\Sg\cap\ptl\Om$, and $\Sg_0$ is a closed singular set with $A_f(\Sg_0)=0$. If $E$ is weighted stable, then $E$ is the intersection with $\Om$ of a half-space with boundary parallel or perpendicular to $\ptl\Om$.
\end{corollary}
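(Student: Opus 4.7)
The plan is to recognize this corollary as an immediate specialization of Theorem~\ref{th:stable}. The Gaussian density $\ga_c(p) = \exp(-c|p|^2)$ is precisely the density in \eqref{eq:perturb} with the trivial perturbation $\omega(t) \equiv 0$ on $J_\Om$. The constant function $\omega = 0$ is smooth on the closure of $(a,b)$ and concave (though not strictly so), so every hypothesis of Theorem~\ref{th:stable} is satisfied in the Gaussian setting.

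I would therefore simply invoke Theorem~\ref{th:stable} applied to our set $E$. Every hypothesis on $E$---that it is open, of finite weighted perimeter in $\Om$, with regular part $\Sg$ of null weighted capacity satisfying $\ptl\Sg = \Sg \cap \ptl\Om$, closed singular set $\Sg_0$ with $A_f(\Sg_0) = 0$, and weighted stability---transfers verbatim. The conclusion then delivers exactly what the corollary asserts: $E$ is the intersection with $\Om$ of a half-space whose boundary is either parallel or perpendicular to $\ptl\Om$.

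The only point that merits a brief comment is that, unlike the strictly concave case treated in the second half of Theorem~\ref{th:stable}, here we cannot discard the parallel configuration. This is consistent with Lemma~\ref{lem:half-spaces}~(iii): a half-space parallel to $\ptl\Om$ at height $t_0$ is weighted stable if and only if $\omega''(t_0) \geq 0$, which is trivially satisfied by $\omega \equiv 0$. Accordingly both possibilities in the conclusion are genuinely realized and no sharper statement is available. Since Theorem~\ref{th:stable} carries all the work, there is no real obstacle to overcome in this corollary beyond verifying this matching of hypotheses.
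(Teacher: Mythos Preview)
Your proposal is correct and matches the paper's approach exactly: the paper states the corollary as ``a particular case of Theorem~\ref{th:stable}'' obtained by taking $\omega\equiv 0$, with no further argument. Your additional remark about why the parallel case cannot be excluded (via Lemma~\ref{lem:half-spaces}~(iii)) is a nice clarification that the paper leaves implicit.
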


The arguments in the proof of Theorem~\ref{th:stable} also apply to show that a stable set of finite weighted perimeter and null weighted capacity in $\rrn$ with a smooth log-concave perturbation of the Gaussian density as in \eqref{eq:perturb} must be a half-space. In this case the computations are even easier since $\ptl\Sg=\emptyset$. By combining this fact with Lemma~\ref{lem:half-spaces}, we get

\begin{theorem}[Stable sets in $\rrn$]
\label{th:stableGausswhole}
Consider the density $f=e^\psi$, where $\psi(p):=\omega(\pi(p))-c |p|^2$ for some concave function $\omega\in C^\infty(\rr)$ and some $c>0$. Let $E\sub\rrn$ be an open set of finite weighted perimeter such that $\ptl E=\Sg\cup\Sg_0$, where $\Sg$ is a smooth hypersurface of null weighted capacity, and $\Sg_0$ is a closed singular set with $A_f(\Sg_0)=0$. If $E$ is weighted stable, then $E$ is a half-space. Moreover:
\begin{itemize}
\item[(i)] if $\omega$ is not an affine function, then $\ptl E$ is horizontal or vertical,
\item[(ii)] if $\omega$ is strictly concave, then $\ptl E$ is vertical.
\end{itemize}
\end{theorem}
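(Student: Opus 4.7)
The plan is to transport the argument of Theorem~\ref{th:stable} to the setting $\Om=\rrn$, where the simplification $\ptl\Sg=\emptyset$ eliminates all boundary integrals, and then to combine the resulting structural conclusion with the stationarity/stability discussion of Lemma~\ref{lem:half-spaces} (together with Remark~\ref{re:sanasana}(1)) to obtain the refinements in (i) and (ii). Since $\omega$ is concave, \eqref{eq:riccperturb} gives $\ric_f\geq 2c>0$, so the hypotheses of Proposition~\ref{prop:extended} are satisfied; in particular $\Sg$ must be connected, for otherwise a suitably signed locally constant function on $\Sg$ would violate the stability inequality $\indo_f(u,u)\geq 0$.

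For each unit horizontal vector $\eta\in\rrn$ (i.e., $\escpr{\eta,\ptl_t}=0$) I consider the one-parameter group of translations $\tau_s(p):=p+s\eta$ and the function $h_\eta(p):=\escpr{\eta,N_p}$. Since $\pi\circ\tau_s=\pi$ for horizontal $\eta$ and both the Euclidean mean curvature and the unit normal are translation-invariant, the computation leading to \eqref{eq:eigen} in the proof of Theorem~\ref{th:stable} applies verbatim and yields
\[
\mathcal{L}_f(h_\eta)=\Delta_{\Sg,f}h_\eta+\big(\ric_f(N,N)+|\sg|^2\big)\,h_\eta=2c\,h_\eta.
\]
Because $h_\eta$ is bounded and $A_f(\Sg)=P_f(E,\rrn)<+\infty$, I can set $\alpha_\eta:=-A_f(\Sg)^{-1}\int_\Sg h_\eta\,da_f$ and define the mean-zero test function $u_\eta:=\alpha_\eta+h_\eta$. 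The gradient estimate $|\nabla_\Sg u_\eta|^2\leq|\sg|^2\leq\ric_f(N,N)+|\sg|^2$ together with $\Delta_{\Sg,f}u_\eta=2c h_\eta-(\ric_f(N,N)+|\sg|^2)\,h_\eta$ and Proposition~\ref{prop:extended}(ii) place $u_\eta$ in the hypotheses of the generalized stability inequality $\ind_f(u_\eta,u_\eta)\geq 0$ (with no boundary term since $\ptl\Sg=\emptyset$).

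Expanding $\ind_f(u_\eta,u_\eta)=-\int_\Sg u_\eta\,\mathcal{L}_f(u_\eta)\,da_f$ exactly as in the chain culminating in \eqref{eq:memo}, using the symmetry identity \eqref{eq:symmetry} (whose boundary term vanishes because $\ptl\Sg=\emptyset$) to eliminate the cross term $\int_\Sg h_\eta\,(|\sg|^2-\omega''(\pi(p))\,\escpr{\ptl_t,N}^2)\,da_f$, and finally invoking Cauchy--Schwarz in $L^2(\Sg,da_f)$ together with $|\sg|^2\geq 0$ and $-\omega''\geq 0$, I obtain the two-sided pinching
\[
0\leq-\alpha_\eta^2\int_\Sg\big(|\sg|^2-\omega''(\pi(p))\,\escpr{\ptl_t,N}^2\big)\,da_f+2c\left[\frac{1}{A_f(\Sg)}\left(\int_\Sg h_\eta\,da_f\right)^{\!2}-\int_\Sg h_\eta^2\,da_f\right]\leq 0.
\]
Hence $h_\eta\equiv -\alpha_\eta$ on $\Sg$ for every horizontal $\eta$, and in addition $\alpha_\eta^2\,|\sg|^2=0$ on $\Sg$ for every such $\eta$. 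Either $\alpha_\eta=0$ for all horizontal $\eta$ (so $N$ is parallel to $\ptl_t$ on the connected hypersurface $\Sg$, forcing $\Sg$ to be a horizontal hyperplane), or $|\sg|^2=0$ on $\Sg$ and $N$ is constant there, again forcing $\Sg$ to be a hyperplane. In either case the generalized normal $N_f$ of \eqref{eq:gg2} extends continuously, so Lemma~\ref{lem:giusti} makes $\ptl E$ a $C^1$ hypersurface, giving $\Sg_0=\emptyset$ and $E$ a half-space.

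Finally, the refinements follow from Lemma~\ref{lem:half-spaces} together with Remark~\ref{re:sanasana}(1): since $E$ is weighted stationary, $\Sg$ has constant $f$-mean curvature, and when $\omega$ is not affine this forces $\Sg$ to be horizontal or vertical, proving (i); when $\omega$ is strictly concave, the criterion $\omega''(t_0)\geq 0$ of Lemma~\ref{lem:half-spaces}(iii) rules out horizontal $\Sg$, leaving only the vertical case (ii). The main obstacle in executing this plan is verifying the integrability conditions and the vanishing of symmetry boundary terms that license applying Proposition~\ref{prop:extended}(ii) and \eqref{eq:symmetry} to the unbounded hypersurface $\Sg$; these follow from null weighted capacity, finiteness of $A_f(\Sg)$, and the curvature lower bound, exactly as in the proof of Theorem~\ref{th:stable}, with the pleasant simplification that $\ptl\Sg$ is empty so no conormal computations are needed.
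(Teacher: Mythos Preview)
Your proposal is correct and follows essentially the same approach as the paper: the paper simply states that the arguments of Theorem~\ref{th:stable} carry over to $\Om=\rrn$ with the simplification $\ptl\Sg=\emptyset$, and that one then combines the resulting half-space conclusion with Lemma~\ref{lem:half-spaces} (via Remark~\ref{re:sanasana}(1)) to obtain (i) and (ii). You have spelled out exactly these steps, including the eigenvalue identity \eqref{eq:eigen} for horizontal $\eta$, the use of Proposition~\ref{prop:extended}(ii) and the symmetry relation \eqref{eq:symmetry} (both with vanishing boundary terms), the pinching argument, and the final appeal to Lemma~\ref{lem:giusti} and Lemma~\ref{lem:half-spaces}.
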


As the particular case of Theorem~\ref{th:stableGausswhole} when $\omega=0$ we deduce a characterization result for weighted stable sets in the Gauss space. This will be used in Section~\ref{sec:main} to provide a new proof of the Gaussian isoperimetric inequality.

\begin{corollary}[Stable sets in the Gauss space]
\label{cor:wholeGauss}
Let $E$ be an open set of finite weighted perimeter in $\rrn$ with Gaussian density $\ga_c(p):=e^{-c |p|^2}$, $c>0$. Suppose that $\ptl E=\Sg\cup\Sg_0$, where $\Sg$ is a smooth hypersurface of null weighted capacity and $\Sg_0$ is a closed singular set with $A_f(\Sg_0)=0$. If $E$ is weighted stable, then $E$ is a half-space.
\end{corollary}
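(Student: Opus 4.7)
The statement is exactly the $\omega\equiv 0$ case of Theorem~\ref{th:stableGausswhole}. My plan is therefore to verify that this specialization is legitimate and then simply invoke the theorem. Setting $\omega(t):=0$ for all $t\in\rr$, we have $\omega\in C^\infty(\rr)$ and $\omega$ is (vacuously) concave, while the density $f(p)=\exp(\omega(\pi(p))-c|p|^2)$ coincides with $\ga_c(p)=e^{-c|p|^2}$. Since $E\sub\rrn$ has finite weighted perimeter and $\ptl E=\Sg\cup\Sg_0$ with $\Sg$ a smooth hypersurface of null weighted capacity and $A_f(\Sg_0)=0$, all hypotheses of Theorem~\ref{th:stableGausswhole} are satisfied. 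Weighted stability of $E$ therefore forces $E$ to be a half-space.

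Because $\omega=0$ is an affine function, neither the additional conclusion (i) (boundary horizontal or vertical) nor (ii) (boundary vertical) of Theorem~\ref{th:stableGausswhole} applies, and no restriction on the direction of $\ptl E$ is obtained. This is consistent with the fact that $\ga_c$ is invariant under the orthogonal group, so that every half-space is equally stable and the result cannot be strengthened.

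If one wanted to avoid quoting Theorem~\ref{th:stableGausswhole} and reprove the corollary directly, the strategy would be identical. One applies the volume-preserving variation of Lemma~\ref{lem:test} along a unit direction $\eta\in\rrn$ to form the bounded, mean-zero test function $u:=\alpha+\escpr{\eta,N}$, and then invokes the generalized stability inequality in Proposition~\ref{prop:extended}~(ii), noting that $h(p):=\escpr{\eta,N_p}$ is an eigenfunction $\mathcal{L}_f h=2c h$ of the $f$-Jacobi operator (the $\omega''\equiv 0$ specialization of~\eqref{eq:eigen}). A short computation combined with the Cauchy--Schwarz inequality in $L^2(\Sg,da_f)$ forces $\alpha^2|\sg|^2=0$ on $\Sg$ together with $\escpr{\eta,N}\equiv -\alpha$ for each direction $\eta$; letting $\eta$ range over a basis of $\rrn$ shows that $N$ is a constant unit vector, so $\Sg$ lies in a hyperplane. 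Lemma~\ref{lem:giusti} then gives $\Sg_0=\emptyset$, and connectedness of $\Sg$ (obtained by plugging locally constant mean-zero functions into Proposition~\ref{prop:extended}~(i) and using $\ric_f\geq 2c>0$) completes the proof. The only genuine technical hurdle, exactly as in the proof of Theorem~\ref{th:stable}, is verifying the integrability hypotheses on $u$ and $\mathcal{L}_f u$ needed to apply Proposition~\ref{prop:extended}, a step already settled by the null-capacity property of $\Sg$.
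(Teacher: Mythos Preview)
Your proposal is correct and matches the paper's approach exactly: the paper simply states that Corollary~\ref{cor:wholeGauss} is the particular case $\omega=0$ of Theorem~\ref{th:stableGausswhole}, with no separate argument given. Your additional remarks on why the affine case imposes no directional restriction and your sketch of a direct proof are accurate elaborations, but the paper itself provides nothing beyond the one-line specialization.
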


\begin{remark}
In our stability results the technical hypothesis $\text{Cap}_f(\Sg)=0$ cannot be removed. As we mentioned in Section~\ref{subsec:capacity} this condition implies that the singular set $\Sg_0$ is negligible and we can work as in the case $\Sg$ compact. However, when $\Sg_0$ is not small enough, then it is possible to find weighted stable sets such that the regular part of the boundary is not totally geodesic. Let us illustrate this in the most simple situation of the Gauss plane $(\rr^2,\ga_c)$. It is clear that the weighted length of a curve in $(\rr^2,\ga_c)$ coincides with its Riemannian length in $(\rr^2,g)$, where $g$ is the conformal metric given by $\ga_c\,\escpr{\cdot\,,\cdot}$. Take any open polygon $E\sub\rr^2$ bounded by length-minimizing geodesics in $(\rr^2,g)$. By construction this set is a second order minimum of the weighted perimeter for arbitrary variations compactly supported on $\Sg$. On the other hand, since a segment has vanishing curvature in $(\rr^2,g)$ if and only if it is part of a straight line containing the origin, we conclude that $\Sg$ is not totally geodesic in $\rr^2$. 
\end{remark}

As we showed in Example~\ref{ex:finite}, any complete hypersurface of finite weighted area has null weighted capacity. Hence, by using the same technique as in Theorem~\ref{th:stable}, we obtain the following classification result for free boundary $f$-stable hypersurfaces.

\begin{corollary}[Complete $f$-stable hypersurfaces of finite weighted area]
\label{cor:stableGauss}
Consider an open half-space or slab $\Om:=\rr^n\times (a,b)$ endowed with the density $f=e^\psi$, where $\psi(p):=\omega(\pi(p))-c |p|^2$ for some concave function $\omega$ smooth on the closure of $(a,b)$ and some $c>0$. Let $\Sg\sub\overline{\Om}$ be a smooth, complete, orientable hypersurface with boundary $\ptl\Sg=\Sg\cap\ptl\Om$. If $\Sg$ is $f$-stable and has finite weighted area, then $\Sg$ is either a hyperplane parallel to $\ptl\Om$ or the intersection with $\Om$ of a hyperplane perpendicular to $\ptl\Om$.  Moreover, if $\omega$ is strictly concave, then $\Sg$ is a hyperplane perpendicular to $\ptl\Om$.
\end{corollary}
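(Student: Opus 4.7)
The proof will closely mirror that of Theorem~\ref{th:stable}. The first observation is that Example~\ref{ex:finite} applies: since $\Sg$ is complete with $A_f(\Sg)<+\infty$, one has $\text{Cap}_f(\Sg)=0$. Moreover, concavity of $\omega$ together with \eqref{eq:riccperturb} gives $\ric_f\geq 2c>0$, and $\Om$ is convex (indeed totally geodesic along $\ptl\Om$). Thus the hypotheses required to run the approximation argument in Proposition~\ref{prop:extended} are met; although that proposition is stated for the boundary of a weighted stable set $E$, the only property of $E$ that enters the proof is the inequality $\indo_f(u_k,u_k)\geq 0$ for $u_k\in C^\infty_0(\Sg)$ with $\int_\Sg u_k\,da_f=0$, which is precisely the definition of $f$-stability of the hypersurface $\Sg$. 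Consequently, the generalized stability inequalities remain valid: for any bounded $u\in C^\infty(\Sg)\cap H^1(\Sg,da_f)$ with $\int_\Sg u\,da_f=0$, $\Delta_{\Sg,f}u\in L^1(\Sg,da_f)$ and $\ptl u/\ptl\nu\in L^1(\ptl\Sg,dl_f)$, one has $\ind_f(u,u)\geq 0$. A locally constant bounded function of vanishing weighted mean (which lies in $H^1$ because $A_f(\Sg)<+\infty$) then rules out the possibility of $\Sg$ being disconnected, since $\ric_f(N,N)\geq 2c>0$ and $\text{II}(N,N)=0$ along $\ptl\Om$ would otherwise force $\ind_f(u,u)<0$.

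With $\Sg$ connected and the generalized stability inequality at our disposal, fix a unit horizontal vector $\eta$ and copy the test-function construction from Theorem~\ref{th:stable}: set $h(p):=\escpr{\eta,N_p}$ and
\[
u:=\alpha+h, \qquad \alpha:=-A_f(\Sg)^{-1}\int_\Sg h\,da_f.
\]
The finiteness of $A_f(\Sg)$ guarantees $h\in L^1(\Sg,da_f)$, so $\int_\Sg u\,da_f=0$, and $u$ is bounded. The identity $\mathcal{L}_f(h)=2c\,h$ from \eqref{eq:eigen} follows just as before by differentiating $(H_f)_s$ along the translations $\tau_s(p)=p+s\eta$; integrability of $|\nabla_\Sg u|^2$ and $\Delta_{\Sg,f}u$ is then inherited from Proposition~\ref{prop:extended}, while $\ptl u/\ptl\nu=0$ along $\ptl\Sg$ because $\ptl\Om$ is totally geodesic and $\eta$ is tangent to $\ptl\Om$.

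Inserting $u$ into $\ind_f(u,u)\geq 0$, exploiting the symmetry formula \eqref{eq:symmetry} applied to $h$ and $1$, and using $\mathcal{L}_f(1)=2c-\omega''(\pi)\,\escpr{\ptl_t,N}^2+|\sg|^2$, the computation in the proof of Theorem~\ref{th:stable} goes through word for word and yields
\[
\alpha^2\big(|\sg|^2-\omega''(\pi(p))\,\escpr{\ptl_t,N_p}^2\big)=0 \text{ on }\Sg, \qquad \escpr{\eta,N_p}=-\alpha \text{ on }\Sg,
\]
for every unit horizontal $\eta$. If $\alpha=0$ for every such $\eta$, then $N$ is vertical on $\Sg$, and connectedness forces $\Sg$ to be a horizontal hyperplane (which must lie at some level $t_0\in(a,b)$, hence $\ptl\Sg=\emptyset$). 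Otherwise some $\alpha\neq 0$ gives $|\sg|^2\equiv 0$, so $\Sg$ lies in an affine hyperplane; orthogonality with $\ptl\Om$ along $\ptl\Sg\neq\emptyset$ (Lemma~\ref{lem:varprop}(i) applied via Lemma~\ref{lem:half-spaces}(ii)) then identifies $\Sg$ as the intersection of a vertical hyperplane with $\Om$. Finally, strict concavity of $\omega$ rules out the horizontal option through Lemma~\ref{lem:half-spaces}(iii), giving the last assertion. The main delicate point of the plan is verifying that the generalized stability inequality of Proposition~\ref{prop:extended} really can be applied without a bounding set $E$---that is, that $f$-stability of a complete hypersurface with $\mathrm{Cap}_f(\Sg)=0$ suffices for the approximation through the sequence $\{\var_k\}$ of Lemma~\ref{lem:capacity}; everything else is a direct transcription of the stable-set argument.
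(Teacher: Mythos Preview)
Your proposal is correct and follows essentially the same approach as the paper, which simply remarks that Example~\ref{ex:finite} yields $\text{Cap}_f(\Sg)=0$ and that the technique of Theorem~\ref{th:stable} then applies verbatim. Your identification of the one genuine checkpoint---that Proposition~\ref{prop:extended} only uses the inequality $\indo_f(u_k,u_k)\geq 0$ for mean-zero $u_k\in C^\infty_0(\Sg)$, which is exactly $f$-stability---is the right justification for transferring the argument from weighted stable sets to $f$-stable hypersurfaces; the parenthetical citation of Lemma~\ref{lem:half-spaces}(ii) at the end is unnecessary, since orthogonality along $\ptl\Sg$ already follows from the first-variation characterization of $f$-stationary hypersurfaces in \cite{castro-rosales} (the analogue of Lemma~\ref{lem:varprop}(i)).
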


\begin{remark}
The previous corollary is valid for Gaussian half-spaces and slabs. The result also holds when $\Om=\rrn$. In particular, we deduce that any smooth, complete, orientable, $f$-stable hypersurface of finite weighted area in the Gauss space must be a hyperplane. This was previously established by McGonagle and Ross, see \cite[Cor.~2]{stable-Gauss}.
\end{remark}

\begin{remark}
\label{re:baal}
The use of test functions of the form $\escpr{\eta,N_p}$ for fixed $\eta\in\rrn$ in the stability condition can be found in several previous works. Sternberg and Zumbrun considered these functions to characterize stable cones in Euclidean balls \cite[Lem.~3.11]{sz2}, and for proving nonexistence of antipodally symmetric stable sets in strictly convex antipodally symmetric bounded domains \cite[Prop.~3.2]{sz2}. Barbosa, do Carmo and Eschenburg employed linear combinations of $\escpr{\eta,N_p}$ and $\escpr{\eta,p}$ to characterize the geodesic spheres as the unique compact stable hypersurfaces in Riemannian space forms of non-vanishing curvature \cite[Thm.~1.2]{bdce}. By means of functions like $\alpha+\escpr{\eta,N_p}$ with $\alpha$ constant, McGonagle and Ross have described smooth, complete, orientable hypersurfaces of constant $f$-mean curvature and finite index in $\rrn$ with Gaussian density, see \cite[Thm.~2]{stable-Gauss}. 
\end{remark}

\section{Characterization of weighted isoperimetric regions}
\label{sec:main}
\setcounter{equation}{0}

Let $\Om:=\rr^n\times (a.b)$ be an open half-space or slab in $\rrn$ endowed with a perturbation of the Gaussian density $\ga_c$ as in \eqref{eq:perturb}.  In this section we show that, when $\omega$ is smooth and concave, then the intersections with $\Om$ of half-spaces perpendicular to $\ptl\Om$ are the unique weighted minimizers in $\Om$, up to sets of volume zero. Our arguments also extend to $\Om=\rrn$, which in particular provides a new approach to describe weighted minimizers in Gauss space. Finally, we will combine optimal transport with the Gaussian isoperimetric inequality to discuss the general situation where $\omega$ is any concave and possibly non-smooth function.

\subsection{Half-spaces and slabs}
\label{subsec:halfslabs}
\noindent

In order to characterize the weighted minimizers in a half-space or slab $\Om$ we will use the stability property as a main tool. Indeed, since any weighted isoperimetric region is also a weighted stable set, we deduce from Theorem~\ref{th:stable} that the only candidates to minimize the weighted perimeter for fixed weighted volume are the intersections with $\Om$ of half-spaces parallel or perpendicular to $\ptl\Om$. When the perturbation of $\ga_c$ is strictly log-concave then we know from Lemma~\ref{lem:half-spaces} (iii) that parallel half-spaces do not minimize, since they are not weighted stable. In the next proposition we show that half-spaces perpendicular to $\ptl\Om$ are always isoperimetrically better than the parallel ones.

\begin{proposition}[Parallel half-spaces vs. perpendicular half-spaces]
\label{prop:comparison}
Consider an open half-space or slab $\Om:=\rr^n\times (a,b)$ endowed with the density $f=e^\psi$, where $\psi(p):=\omega(\pi(p))-c |p|^2$ for some concave function $\omega\in C^\infty(a,b)$ and some $c>0$. Then, a half-space perpendicular to $\ptl\Om$ has strictly less weighted perimeter in $\Om$ than a half-space parallel to $\ptl\Om$ with the same weighted volume.
\end{proposition}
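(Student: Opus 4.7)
The plan is to establish the claim through a second-order ODE / maximum-principle comparison of the perimeter-volume profiles of the two families of half-spaces, as outlined in the Introduction. Parametrize the parallel half-space by $H_t:=\rr^n\times(a,t)$, $t\in(a,b)$, and the perpendicular one by $K_s:=\{p\in\Om:\escpr{p,e_1}<s\}$, $s\in\rr$, for a fixed unit horizontal vector $e_1$. Fubini's theorem yields
\begin{equation*}
V_f(H_t)=C_n\int_a^t e^{\omega(u)-cu^2}\,du,\qquad P_f(H_t,\Om)=C_n\,e^{\omega(t)-ct^2},
\end{equation*}
\begin{equation*}
V_f(K_s)=D\int_{-\infty}^s e^{-cu^2}\,du,\qquad P_f(K_s,\Om)=D\,e^{-cs^2},
\end{equation*}
with $C_n:=\int_{\rr^n}e^{-c|z|^2}\,dz$ and $D:=C_{n-1}\int_a^b e^{\omega(u)-cu^2}\,du$. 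Writing $G(v):=P_f(H_t,\Om)$ and $M(v):=P_f(K_s,\Om)$ as strictly positive functions of the common weighted volume $v\in(0,V_f(\Om))$, the chain rule gives $G'(v)=\omega'(t(v))-2c\,t(v)$, $M'(v)=-2c\,s(v)$, and a second differentiation yields the key identities
\begin{equation*}
G(v)\,G''(v)=\omega''(t(v))-2c\le -2c=M(v)\,M''(v),
\end{equation*}
with strict inequality wherever $\omega''(t(v))<0$.

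Next I would analyse the boundary behaviour of $F:=M-G$. The function $M$ extends continuously to $[0,V_f(\Om)]$ with $M=0$ at both endpoints. For $G$ one has $G(0^+)=C_n e^{\omega(a)-ca^2}>0$ when $a$ is finite and $G(0^+)=0$ when $a=-\infty$ (since concavity forces $\omega(t)-ct^2\to-\infty$), and analogously at the upper end. Hence $F\le 0$ at each endpoint of the volume interval, with strict inequality $F<0$ at every finite endpoint of $(a,b)$; since $\Om$ is a half-space or slab, at least one such finite endpoint exists.

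Arguing by contradiction, suppose $F(v_*)\ge 0$ for some $v_*\in(0,V_f(\Om))$. Combined with the boundary information of the previous step, this forces $F$ to attain a nonnegative maximum at some interior $v_0$, where $F'(v_0)=0$ and $F''(v_0)\le 0$. Using the two identities above together with $M(v_0)\ge G(v_0)>0$,
\begin{equation*}
F''(v_0)=2c\left(\frac{1}{G(v_0)}-\frac{1}{M(v_0)}\right)-\frac{\omega''(t(v_0))}{G(v_0)}\ge 0,
\end{equation*}
a sum of two nonnegative terms. If $F(v_0)>0$ the first summand is strictly positive, contradicting $F''(v_0)\le 0$. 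If $F(v_0)=0$ both summands must vanish, so $M(v_0)=G(v_0)$, $\omega''(t(v_0))=0$, and $F''(v_0)=0$; combined with $F'(v_0)=0$ this matches the full Cauchy data of $M$ and $G$ at $v_0$. On the maximal interval around $v_0$ on which $\omega''\circ t$ vanishes, both profiles satisfy the same autonomous equation $PP''=-2c$, so ODE uniqueness forces $M\equiv G$ there. At an endpoint of this maximal affine interval one still has $M=G$ and $M'=G'$, while the strict inequality $GG''<-2c=MM''$ takes over just beyond; a Taylor expansion at the transition point then yields $F=M-G>0$ nearby, contradicting $F\le 0$ on the closure.

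The delicate step will be the last one: propagating the equality $M=G$ across a transition between an affine stretch of $\omega$ and an adjacent strictly concave one, and exploiting the strict differential inequality on the latter to produce a sign for $F$ that contradicts the maximum principle. All other steps are elementary consequences of Fubini's theorem, the chain rule, and the concavity of $\omega$.
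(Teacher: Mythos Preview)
Your approach is essentially the paper's: derive the profile ODEs, note the parallel profile satisfies $GG''=\omega''-2c\le -2c$ while the perpendicular one satisfies $MM''=-2c$, and compare. The weak inequality $M\le G$ via an interior-maximum argument is correct. The gap is in your treatment of equality.

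The claim that a Taylor expansion at the transition point $v_1$ gives $F=M-G>0$ nearby is wrong. At $v_1$ you have $F(v_1)=F'(v_1)=0$, and since $\omega''(t(v_1))=0$ by continuity, also $F''(v_1)=0$; no finite Taylor jet decides the sign. In fact the correct conclusion is the \emph{opposite}: with $M(v_1)=G(v_1)$, $M'(v_1)=G'(v_1)$, and since $x\mapsto -2c/x$ is increasing, the standard ODE comparison (the paper packages this as Lemma~\ref{lem:analysis}, citing Szarski) forces $G\le M$ to the right of $v_1$; combined with the already-established $M\le G$ you get $M\equiv G$ there, hence $\omega''\equiv 0$ beyond $v_1$, contradicting maximality of the affine stretch. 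So the propagation works, but by producing more equality rather than a sign flip.

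Once you have this, the cleanest route is the paper's: drop the maximal-affine-interval detour, apply the comparison directly from $v_0$ to conclude $M\equiv G$ on all of $(0,V_f(\Om))$, hence $\omega''\equiv 0$ on $(a,b)$, hence $\omega$ is affine. Then at any finite endpoint $a$ (or $b$) of the slab one has $G(0^+)=C_n e^{\omega(a)-ca^2}>0$ while $M(0^+)=0$, contradicting $M\equiv G$. Note that your earlier assertion ``$G(0^+)>0$ whenever $a$ is finite'' is not true for general concave $\omega\in C^\infty(a,b)$ (take $\omega(t)=-1/(t-a)$); it becomes true only \emph{after} you have established that $\omega$ is affine, which is exactly how the paper uses it.
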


\begin{proof}
The proof of the proposition relies on the fact that the profile function associated to a parallel one-parameter family of $f$-stationary hyperplanes satisfies a second order differential inequality, which becomes an equality when the family is perpendicular to $\ptl\Om$.

Let $T\sub\rrn$ be a linear hyperplane. We take an orthonormal basis $\{e_1,\ldots,e_n,\xi_T\}$, where $\{e_1,\ldots,e_{n}\}\sub T$ and $\xi_T$ is chosen so that $\xi_T=\ptl_t$ when $T$ is the horizontal hyperplane $x_{n+1}=0$. We identify a point $p\in\rrn$ with its coordinates $(z,t)=(z_1,\ldots,z_n,t)$ in the previous basis. Given a domain $U\subeq T$ we consider the family $\Sg_s:=U\times\{s\}$, where $-\infty\leq\alpha<s<\beta\leq +\infty$. Let $C_U:=U\times (\alpha,\beta)$, and denote by $E_s$ the cylinder $U\times (\alpha,s)$ with $s\in (\alpha,\beta)$. Recall that $\pi:\rrn\to\rr$ is the vertical projection.

By Lemma~\ref{lem:finitevolume} we know that $f$ is a bounded density of finite weighted volume, and that hyperplanes intersected with $\Om$ has finite weighted area. Thus, we can define the weighted volume and area functions $V_f(s):=V_f(E_s)$ and $A_f(s):=A_f(\Sg_s)=P_f(E_s,C_U)$. By using the change of variables formula and Fubini's theorem, we get
\begin{align}
\label{eq:gizmo1}
V_f(s)&=\int_\alpha^s A_f(t)\,dt,
\\
\label{eq:gizmo2} 
A_f(s)&=\int_U e^{\omega(\pi(z,s))-c |z|^2-c s^2}\,dz.
\end{align}
Now, we apply differentiation under the integral sign to obtain 
\begin{align*}
V_f'(s)&=A_f(s),
\\
A_f'(s)&=\int_U \{\omega'(\pi(z,s))\,\escpr{\xi_T,\ptl_t}-2c s\}\,
e^{\omega(\pi(z,s))-c |z|^2-c s^2}\,dz,
\end{align*} 
for any $s\in (\alpha,\beta)$. In particular, we deduce
\[
A_f'(s)=
\begin{cases}
(\omega'(s)-2c s)\,A_f(s), \ \text{ if } T \text{ is horizontal},
\\
-2c s\,A_f(s), \hspace{1.315cm} \text{ if } T \text{ is vertical}.
\end{cases}
\]
Let $F:[0,V_f(C_U)]\to\rr$ be the associated profile function given by $F(v):=A_f(V_f^{-1}(v))$. This function is continuous on $[0,V_f(C_U)]$  and $C^2$ on $(0,V_f(C_U))$. From the computations above and the concavity of $\omega$ it is straightforward to check that, when $T$ is horizontal, then $F''\leq -2c F^{-1}$ in $(0,V_f(C_U))$ with equality if and only if $\omega$ is an affine function. Moreover, we have $F''= -2c F^{-1}$ in $(0,V_f(C_U))$ when $T$ is vertical. 

The previous arguments can be applied when $\Sg_s$ is a family of hyperplanes in $\Om$ which are either parallel or perpendicular to $\ptl\Om$. If $F$ and $G$ denote the associated profile functions, then $F''\leq -2c F^{-1}$ in $(0,V_f(\Om))$ and $G''=-2c\,  G^{-1}$ in $(0,V_f(\Om))$. Note that $F(0)\geq 0$ and $F(V_f(\Om))\geq 0$ (indeed, these values are positive if $\Om$ is a slab and $f$ is smooth in $\overline{\Om}$). By taking into account that $G(0)=G(V_f(\Om))=0$, we conclude that $F\geq G$ in $(0,V_f(\Om))$ by Lemma~\ref{lem:analysis} below. Moreover, in case of equality for some $v\in (0,V_f(\Om))$, then we would have $F=G$ in $[0,V_f(\Om)]$. Thus $F''=-2cF^{-1}$ in $(0,V_f(\Om))$ and we would deduce that $\omega$ is an affine function. In particular $F(0)>0$ or $F(V_f(\Om))>0$, which contradicts that $F=G$. This shows that $F>G$ in $(0,V_f(\Om))$ and proves the claim.
\end{proof}

\begin{lemma}
\label{lem:analysis}
Let $F,G:[a,b]\to\rr$ be continuous functions of class $C^2$ on $(a,b)$, positive in $(a,b)$, and satisfying $F''\leq -2c F^{-1}$ and $G''=-2c\,G^{-1}$ in $(a,b)$ for some $c>0$. If $F(a)\geq G(a)$ and $F(b)\geq G(b)$, then $F\geq G$ in $[a,b]$. Moreover, if $F(t_0)=G(t_0)$ for some $t_0\in (a,b)$, then $F=G$ in $[a,b]$.  
\end{lemma}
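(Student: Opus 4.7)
My plan is to analyze $h:=F-G$ and extract a single differential inequality that drives both parts of the statement. Subtracting $G''=-2c/G$ from $F''\leq -2c/F$ on $(a,b)$ gives
\[
h''(t) \;\leq\; -\frac{2c}{F(t)}+\frac{2c}{G(t)} \;=\; \frac{2c\,h(t)}{F(t)\,G(t)}, \qquad t\in (a,b),
\]
which will be the main tool. Note that $F,G>0$ on $(a,b)$, so the right-hand side is well defined.

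For the first assertion I argue by contradiction. If $\min_{[a,b]}h<0$, the continuity of $h$ on $[a,b]$ together with $h(a),h(b)\geq 0$ forces the minimum to be attained at some $t^{\ast}\in(a,b)$; the usual calculus conditions give $h'(t^{\ast})=0$ and $h''(t^{\ast})\geq 0$. But since $h(t^{\ast})<0$ and $F(t^{\ast})G(t^{\ast})>0$, the displayed inequality forces $h''(t^{\ast})<0$, a contradiction. Hence $F\geq G$ on $[a,b]$.

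For the rigidity statement, suppose $F(t_0)=G(t_0)$ at some $t_0\in(a,b)$. Since $h\geq 0$ by the first part and $h(t_0)=0$, the point $t_0$ is a global minimum of $h$, so $h(t_0)=h'(t_0)=0$. I first establish that $h\equiv 0$ on an entire right neighborhood of $t_0$ (the left side being symmetric). Pick $\delta>0$ with $[t_0,t_0+\delta]\subset(a,b)$, set $m:=\min_{[t_0,t_0+\delta]}\min\{F,G\}>0$, and let $K_0:=2c/m^2$, so that $h''\leq K_0\,h$ on $[t_0,t_0+\delta]$. Integrating twice from $t_0$, using $h(t_0)=h'(t_0)=0$ and the monotonicity of $\Phi(t):=\int_{t_0}^t h(s)\,ds$ (non-decreasing since $h\geq 0$), yields
\[
h(t)\;\leq\; K_0\,(t-t_0)\,\Phi(t), \qquad t\in[t_0,t_0+\delta].
\]
Since $\Phi'=h$ and $\Phi(t_0)=0$, Gronwall's inequality forces $\Phi\equiv 0$, hence $h\equiv 0$ on $[t_0,t_0+\delta]$. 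A standard continuation argument, setting $T:=\sup\{t\in[t_0,b]: h\equiv 0\text{ on }[t_0,t]\}$ and reapplying the local estimate at $T$ whenever $T<b$ (possible because $F,G>0$ at any interior $T$), propagates the vanishing up to $b$, and symmetrically down to $a$. Thus $F=G$ on $[a,b]$.

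The main obstacle is the rigidity part. The differential inequality $h''\leq c(t)\,h$ has a \emph{non-negative} coefficient $c(t):=2c/(FG)$, which is the ``wrong sign'' for a direct application of the classical strong maximum principle; one must exploit both the extra information $h\geq 0$ from the first part and the simultaneous vanishing $h(t_0)=h'(t_0)=0$, which together power the double-integration plus Gronwall step. The endpoints $a,b$ require mild care because $F$ or $G$ may vanish there, but the continuation is always carried out along compact subintervals of $(a,b)$ where $FG$ stays bounded below, so no genuine difficulty arises.
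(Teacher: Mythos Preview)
Your proof is correct. The first part (the contradiction at an interior minimum of $h=F-G$) is exactly the argument the paper gives.

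For the rigidity part your approach genuinely differs from the paper's. The paper observes that $F(t_0)=G(t_0)$ and $F'(t_0)=G'(t_0)$, notes that $x\mapsto -2c/x$ is increasing on $(0,\infty)$, and then invokes a classical ODE comparison theorem (Szarski, \emph{Differential inequalities}, Thm.~19.1) to conclude $F\leq G$ on $[t_0,b]$ and, by reflection, on $[a,t_0]$; combined with the already established $F\geq G$ this gives $F=G$. Your route is instead fully self-contained: you exploit $h\geq 0$ together with $h''\leq K_0\,h$ and the initial data $h(t_0)=h'(t_0)=0$, double-integrate to get $\Phi'\leq K_0(t-t_0)\Phi$ with $\Phi(t_0)=0$, and apply Gronwall to force $\Phi\equiv 0$ locally, then propagate by a standard open--closed continuation inside $(a,b)$. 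This is slightly longer but avoids the external reference; the paper's version is shorter because the comparison principle packages the whole argument in one citation. Your remark about handling the endpoints, where $F$ or $G$ may vanish, is accurate: the continuation stays inside $(a,b)$ and continuity of $h$ carries the conclusion to $a$ and $b$.
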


\begin{proof}
Suppose that there is $t\in (a,b)$ where $F(t)<G(t)$. Then, the minimum of the function $F-G$ is achieved at some $t_1\in (a,b)$ for which $(F-G)(t_1)<0$. Thus, we get 
\[
0\leq (F-G)''(t_1)\leq -2c F(t_1)^{-1}+2c\,G(t_1)^{-1}<0.
\]
This shows that $F\geq G$ in $[a,b]$. Suppose now that $F(t_0)=G(t_0)$ for some $t_0\in (a,b)$, which in particular implies $F'(t_0)=G'(t_0)$. As the function $x\mapsto -2c/x$ is increasing for $x>0$, we can apply a classical comparison result for ordinary differential inequalities \cite[Thm.~19.1]{diffineq} to deduce that $F\leq G$ in $[t_0,b]$. By using the same result with the functions $F_1(t):=F(t_0-t)$ and $G_1(t):=G(t_0-t)$ on $[0,t_0-a]$ we conclude that $F\leq G$ in $[a,b]$. This fact together with inequality $F\geq G$ in $[a,b]$ proves the claim.
\end{proof}

We can now combine our main previous results to prove the following statement.

\begin{theorem}[Weighted minimizers in half-spaces and slabs]
\label{th:isoperimetric}
Consider an open half-space or slab $\Om:=\rr^n\times (a,b)$ endowed with the density $f=e^\psi$, where $\psi(p):=\omega(\pi(p))-c |p|^2$ for some concave function smooth in the closure of $(a,b)$ and some $c>0$. Then, weighted isoperimetric regions of any given volume exist in $\Om$ and, up to sets of volume zero, they are all intersections with $\Om$ of half-spaces perpendicular to $\ptl\Om$.
\end{theorem}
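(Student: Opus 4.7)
The plan is to chain together the existence, regularity, null-capacity, stability, and comparison results that have been built up over the previous sections. Since $\omega$ is concave and smooth on $\overline{(a,b)}$, Lemma~\ref{lem:finitevolume} gives both that $f$ is bounded and that $V_f(\Om)<+\infty$. Consequently Theorem~\ref{th:exist} produces a weighted minimizer $E\sub\Om$ of any prescribed weighted volume $v\in(0,V_f(\Om))$. The smoothness hypothesis on $\omega$ lets us invoke Theorem~\ref{th:reg}, so after discarding a set of volume zero we may assume $E$ is open, with $\overline{\ptl E\cap\Om}=\Sg\cup\Sg_0$, where $\Sg$ is smooth with $\ptl\Sg=\Sg\cap\ptl\Om$ and $\Sg_0$ is a closed singular set of Hausdorff dimension at most $n-7$; in particular $A_f(\Sg_0)=0$ and $P_f(E,\Om)=A_f(\Sg)<+\infty$.

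Next I would use the boundedness of $f$ to apply Theorem~\ref{th:nullcap}, which gives $\text{Cap}_f(\Sg)=0$. At this point $E$ fits exactly into the hypotheses of Theorem~\ref{th:stable}: it is an open set of finite weighted perimeter whose interior boundary splits as a smooth hypersurface $\Sg$ of null weighted capacity together with a small singular set $\Sg_0$ of vanishing weighted area. Since any weighted isoperimetric region is a fortiori weighted stable (Lemma~\ref{lem:varprop} applied to volume-preserving variations), Theorem~\ref{th:stable} forces $E$ to be the intersection with $\Om$ of a half-space whose boundary is either parallel or perpendicular to $\ptl\Om$.

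It remains to exclude the case of a half-space parallel to $\ptl\Om$. When $\omega$ is strictly concave this is already contained in the second assertion of Theorem~\ref{th:stable} (equivalently, in Lemma~\ref{lem:half-spaces}(iii), since $\omega''<0$ makes such a half-space weighted unstable). In the general concave case I would invoke Proposition~\ref{prop:comparison}: any perpendicular half-space has \emph{strictly} smaller weighted perimeter than a parallel one enclosing the same weighted volume. Hence a parallel candidate cannot be a minimizer, and $E$ must be the intersection with $\Om$ of a half-space perpendicular to $\ptl\Om$. The isoperimetric profile restricted to perpendicular half-spaces is strictly monotone in the parameter (indeed the weighted volume function $V_f(s)$ is strictly increasing, with derivative $A_f(s)>0$), so the perpendicular half-space with a given weighted volume $v$ is unique as a set in each given direction, which is the sharp form of the conclusion.

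The only real subtlety I anticipate is the bookkeeping for the concave-but-not-strictly-concave case, where stability alone does not discard parallel half-spaces and one genuinely needs the quantitative comparison of Proposition~\ref{prop:comparison}; all the remaining steps are direct citations of earlier results in the paper.
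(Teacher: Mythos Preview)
Your proposal is correct and follows essentially the same route as the paper's own proof: Lemma~\ref{lem:finitevolume} for boundedness and finite volume, Theorem~\ref{th:exist} for existence, Theorem~\ref{th:reg} for regularity, Theorem~\ref{th:nullcap} for null weighted capacity, Theorem~\ref{th:stable} to reduce to half-spaces parallel or perpendicular to $\ptl\Om$, and Proposition~\ref{prop:comparison} to discard the parallel ones. Your final sentence about uniqueness within a given direction is extraneous to the statement and slightly misleading---the paper in fact observes (Remark~\ref{re:uniqueness}) that minimizers are \emph{not} unique, since vertical rotations carry one perpendicular half-space to another of the same volume and perimeter.
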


\begin{proof}
The concavity of $\omega$ implies by Lemma~\ref{lem:finitevolume} that $f$ is bounded and $V_f(\Om)<+\infty$. Hence, the existence of weighted minimizers is a consequence of Theorem~\ref{th:exist}. Let $E\sub\Om$ be a weighted isoperimetric region with $V_f(E)=v\in (0,V_f(\Om))$. From the regularity results in Theorem~\ref{th:reg} we know that the interior boundary $\overline{\ptl E\cap\Om}$ is a disjoint union $\Sg\cup\Sg_0$, where $\Sg$ is a smooth embedded hypersurface with (possibly empty) boundary $\ptl\Sg=\Sg\cap\ptl\Om$, and $\Sg_0$ is a closed singular set of Hausdorff dimension less than or equal to $n-7$. By Remark~\ref{re:isop} we can assume that $E$ is, up to a set of volume zero, an open set. From Theorem~\ref{th:nullcap} the hypersurface $\Sg$ has null weighted capacity. By the classification of weighted stable sets in Theorem~\ref{th:stable} we deduce that $E$ is the intersection with $\Om$ of a half-space with boundary parallel or perpendicular to $\ptl\Om$. Finally, we apply the comparison in Proposition~\ref{prop:comparison} to conclude that half-spaces perpendicular to $\ptl\Om$ are isoperimetrically better.
\end{proof}

\begin{remark}[Uniqueness]
\label{re:uniqueness}
The weighted minimizers in $\Om$ of a given volume are not unique. Since the density $f=e^\psi$ with $\psi(p):=\omega(\pi(p))-c |p|^2$ is invariant under any Euclidean vertical rotation $\phi$, then a set $E\sub\Om$ is a weighted minimizer of volume $v$ if and only if the same holds for $\phi(E)$. In fact, for any two half-spaces $E$ and $E'$ perpendicular to $\ptl\Om$ and with the same weighted volume in $\Om$, there is a rotation $\phi$ as above such that $\phi(E)=E'$. 
\end{remark}

An interesting particular case of Theorem~\ref{th:isoperimetric} is obtained when $\omega$ vanishes identically.

\begin{corollary}[Weighted minimizers in Gaussian half-spaces and slabs]
\label{th:isopGausshalf}
Let $\Om$ be an open half-spa\-ce or slab in $\rrn$ with Gaussian density $\ga_c(p):=e^{-c |p|^2}$, $c>0$. Then, weighted isoperimetric regions of any given volume exist in $\Om$ and they are all obtained, up to sets of volume zero, by intersecting $\Om$ with half-spaces perpendicular to $\ptl\Om$.
\end{corollary}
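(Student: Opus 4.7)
The plan is to recognize this corollary as the special case $\omega \equiv 0$ of Theorem~\ref{th:isoperimetric}. The zero function is smooth on the closure of any interval and concave (indeed affine), so the hypotheses of the general theorem hold verbatim, and the density $f(p)=\exp(\omega(\pi(p))-c|p|^2)$ reduces exactly to the Gaussian weight $\gamma_c(p)=e^{-c|p|^2}$. Since the conclusion of Theorem~\ref{th:isoperimetric}---existence of weighted minimizers together with their characterization, up to sets of volume zero, as half-spaces perpendicular to $\partial\Omega$---is word for word the conclusion claimed here, the corollary will follow at once.

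If I want to trace the argument explicitly in this Gaussian setting, the steps are as follows. First, Lemma~\ref{lem:finitevolume} gives $V_{\gamma_c}(\Omega)<+\infty$, so Theorem~\ref{th:exist} furnishes a weighted isoperimetric region $E\subset\Omega$ of any prescribed volume. Second, Theorems~\ref{th:reg} and~\ref{th:nullcap} show that $\overline{\partial E\cap\Omega}=\Sigma\cup\Sigma_0$ with $\Sigma$ a smooth free boundary hypersurface of null weighted capacity and $\Sigma_0$ a closed singular set of small dimension. Third, Corollary~\ref{cor:stableGausshalf} (the specialization of Theorem~\ref{th:stable} to $\gamma_c$) forces $E$ to be the intersection with $\Omega$ of a half-space whose boundary is parallel or perpendicular to $\partial\Omega$.

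The last step is the comparison: since $\omega=0$ is affine, parallel half-spaces are in fact weighted stable by Lemma~\ref{lem:half-spaces}(iii), so stability alone cannot eliminate them. This is resolved by Proposition~\ref{prop:comparison}, applied with $\omega=0$, which yields the strict inequality between the profile of parallel half-spaces and that of perpendicular ones (the profile function of the perpendicular family satisfies the Gaussian ODE $G''=-2cG^{-1}$ with zero boundary values, while the parallel one has positive boundary values and the same differential equation, so the ODE comparison in Lemma~\ref{lem:analysis} is strict). Consequently only perpendicular half-spaces can minimize, and the proof is complete. Since every ingredient is already established, I do not foresee any real obstacle; the only conceptual point worth highlighting is that stability does not suffice in the affine case and one genuinely needs the comparison of Proposition~\ref{prop:comparison}.
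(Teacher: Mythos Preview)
Your proposal is correct and matches the paper's approach exactly: the corollary is stated immediately after Theorem~\ref{th:isoperimetric} as the special case $\omega\equiv 0$, with no separate proof given. Your explicit trace-through of the argument is accurate (one tiny imprecision: for a half-space $\Omega$ only one of the two boundary values of the parallel profile $F$ is positive, but this suffices for the strict comparison in Proposition~\ref{prop:comparison}).
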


\begin{remark}
The isoperimetric problem in a Euclidean half-space $\Om$ with Gaussian density has been studied in previous works.  On the one hand, Lee \cite[Prop.~5.1]{gauss-halfspaces} used the same approximation argument as in \cite{st} to prove that half-spaces perpendicular to $\ptl\Om$ are weighted minimizers. Later on, Adams, Corwin, Davis, Lee and Visocchi~\cite[Thm.~3.1]{gauss-sectors} obtained uniqueness of minimizers when $0\in\ptl\Om$ by means of reflection across $\ptl\Om$ and the characterization of equality cases in the Gaussian isoperimetric inequality. Some basic properties for weighted isoperimetric regions in symmetric planar strips with Gaussian density were established by Lee \cite[Sect.~5]{gauss-halfspaces}. 
\end{remark}

\begin{remark}[Stability vs. isoperimetry]
The isoperimetric result in Theorem~\ref{th:isoperimetric} implies that half-spaces perpendicular to $\ptl\Om$ are weighted stable sets. However, not every weighted stable set is a weighted minimizer. For example, inside an open half-space or slab $\Om$ with Gaussian density, a half-space $E$ parallel to $\ptl\Om$ provides a weighted stable set by Lemma~\ref{lem:half-spaces} (iii). However, $E$ is not isoperimetric since a half-space perpendicular to $\ptl\Om$ of the same weighted volume is isoperimetrically better.
\end{remark}

The weighted stability of half-spaces perpendicular to $\ptl\Om$ allows us to apply the stability inequality in Proposition~\ref{prop:extended} (i) to deduce the following result. 

\begin{corollary}[A Poincar\'e type inequality]
\label{cor:poincare}
Consider an open half-space or slab $\Om:=\rr^n\times (a,b)$ endowed with the density $f=e^\psi$, where $\psi(p):=\omega(\pi(p))-c |p|^2$ for some concave function smooth in the closure of $(a,b)$ and some $c>0$. Let $E$ be the intersection with $\Om$ of a Euclidean half-space perpendicular to $\ptl\Om$. If we denote $\Sg:=\overline{\ptl E\cap\Om}$, then we have
\[
\int_\Sg |\nabla_\Sg u|^2\,da_f\geq 2c\int_\Sg u^2\,da_f
\]
for any function $u\in H^1(\Sg,da_f)$ with $\int_\Sg u\,da_f=0$.
\end{corollary}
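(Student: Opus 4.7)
The plan is to deduce the Poincar\'e inequality directly from the generalized stability inequality in Proposition~\ref{prop:extended} (i), applied to the weighted stable set $E$ whose interior boundary is the flat hypersurface $\Sigma$. Since $E$ is a weighted isoperimetric region by Theorem~\ref{th:isoperimetric}, it is in particular weighted stable and has finite weighted perimeter in $\Om$.

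First I would check the hypotheses of Proposition~\ref{prop:extended}. The domain $\Om$ is convex, being a half-space or slab. Concavity of $\omega$ together with the second equality of \eqref{eq:riccperturb} gives $\ric_f\geq 2c>0$. The interior boundary $\Sg$ is the intersection of a hyperplane with $\Om$; it has empty singular set $\Sg_0=\emptyset$ and, by Lemma~\ref{lem:finitevolume}, finite weighted area. Since $\Sg$ is complete in $\overline\Om$, Example~\ref{ex:finite} gives $\text{Cap}_f(\Sg)=0$. All the hypotheses of Proposition~\ref{prop:extended} are therefore met.

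Next I would compute the $f$-index form of $\Sg$. The unit normal $N$ is horizontal because $\Sg$ is perpendicular to $\ptl\Om$, so $\escpr{\ptl_t,N}=0$ and \eqref{eq:riccperturb} yields $\ric_f(N,N)=2c$ pointwise on $\Sg$. The hypersurface $\Sg$ lies in a hyperplane, hence $|\sg|^2\equiv 0$. Finally $\ptl\Om$ is totally geodesic, so $\text{II}(N,N)\equiv 0$ along $\ptl\Sg$. Substituting into the definition \eqref{eq:index1} of the $f$-index form, one obtains
\[
\indo_f(u,u)=\int_\Sg|\nabla_\Sg u|^2\,da_f-2c\int_\Sg u^2\,da_f.
\]

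The conclusion is then immediate: since $\Sg_0=\emptyset$, the last sentence of Proposition~\ref{prop:extended} (i) ensures that $\indo_f(u,u)\geq 0$ for \emph{every} $u\in H^1(\Sg,da_f)$ with $\int_\Sg u\,da_f=0$ (the boundedness assumption is dropped in this case). This is exactly the desired inequality. No real obstacle is anticipated; the result is essentially a direct reading of the stability condition in the particularly simple geometric situation of a vertical hyperplane, where the curvature term collapses to the scalar $2c$ and both $|\sg|^2$ and the boundary term in $\indo_f$ vanish.
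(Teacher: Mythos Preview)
Your proposal is correct and follows exactly the approach indicated in the paper, which simply states that the weighted stability of half-spaces perpendicular to $\ptl\Om$ (a consequence of Theorem~\ref{th:isoperimetric}) allows one to apply Proposition~\ref{prop:extended}~(i). You have filled in the details faithfully: the verification that $\Sg_0=\emptyset$ and $\text{Cap}_f(\Sg)=0$, the computation $\ric_f(N,N)=2c$ via the horizontality of $N$, the vanishing of $|\sg|^2$ and of the boundary term, and the use of the clause in Proposition~\ref{prop:extended}~(i) that drops the boundedness hypothesis when $\Sg_0=\emptyset$.
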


\begin{remark}
\label{re:spectralgap}
A Poincar\'e type inequality provides a lower bound on the \emph{spectral gap} or \emph{Poincar\'e constant} of $\Sg$, which is the non-negative number defined as
\[
\lambda_f(\Sg):=\inf\left\{\frac{\int_\Sg |\nabla_\Sg u|^2\,da_f}{\int_\Sg u^2\,da_f}\,;\,u\in H^1(\Sg,da_f),\,u\neq 0,\,\int_\Sg u\,da_f=0\right\}.
\] 
It is well known that $\la_f(\Sg)=2c$ for a hyperplane $\Sg$ of $\rrn$ with Gaussian density $\ga_c$, see Ledoux~\cite{markov}.  Recently we learned from E.~Milman that an optimal transport argument as in \cite[Thm.~2.2]{milman-spectral} implies the lower bound $\la_f(\Sg)\geq 2c$ for any convex domain $\Sg$ of a hyperplane endowed with a log-concave perturbation of the Gaussian density. Our Corollary~\ref{cor:poincare} makes use of the stability condition to provide a different proof of inequality $\la_f(\Sg)\geq 2c$ in a particular situation. More general and unified approaches leading to spectral gap inequalities in compact weighted Riemannian manifolds can be found in \cite{bakry-qian} and the recent paper of Kolesnikov and Milman~\cite{milman-kolesnikov}.
\end{remark}

\subsection{The case $\Om=\rrn$. New proof of the Gaussian isoperimetric inequality}
\label{subsec:erreene}
\noindent

The arguments in the proof of Theorem~\ref{th:isoperimetric} can be adapted to solve the isoperimetric problem in $\rrn$ with a smooth log-concave perturbation of the Gaussian density as in \eqref{eq:perturb}.

\begin{theorem}[Weighted minimizers in $\rrn$]
\label{th:isoperimetric2}
Consider the density $f=e^\psi$, where $\psi(p):=\omega(\pi(p))-c |p|^2$ for some concave function $\omega\in C^\infty(\rr)$ and $c>0$. Then, weighted isoperimetric regions of any given volume exist for this density, and we have the following:
\begin{itemize}
\item[(i)] if $\omega$ is not an affine function, then any weighted minimizer is, up to a set of volume zero, a vertical half-space,
\item[(ii)] if $\omega$ is an affine function, then any weighted minimizer is, up to a set of volume zero, a half-space. Indeed, any half-space in $\rrn$ is a weighted minimizer.
\end{itemize}
\end{theorem}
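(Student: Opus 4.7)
The plan follows Theorem~\ref{th:isoperimetric}, adapted to the boundaryless ambient $\Om=\rrn$. The concavity of $\omega$ on $\rr$ gives $\omega(t)\leq\alpha t+\beta$, so $f(p)\leq e^\beta\,e^{\alpha\pi(p)-c|p|^2}$, which is integrable. Hence $f$ is bounded and $V_f(\rrn)<+\infty$, and Theorem~\ref{th:exist} produces weighted minimizers $E$ of any prescribed volume. For such $E$, Theorem~\ref{th:reg} writes $\ptl E=\Sg\cup\Sg_0$ with $\Sg$ smooth (without boundary, since $\ptl\Om=\emptyset$) and $\Sg_0$ of Hausdorff dimension at most $n-7$, and Theorem~\ref{th:nullcap} gives $\text{Cap}_f(\Sg)=0$. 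Since $E$ is automatically weighted stable, Theorem~\ref{th:stableGausswhole} forces $E$ to be a half-space; moreover, in case (i) when $\omega$ is not affine, $\ptl E$ is horizontal or vertical.

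To discard horizontal half-spaces in case (i), I would adapt the profile-comparison argument of Proposition~\ref{prop:comparison}. Writing $F(v)$ and $G(v)$ for the weighted perimeters of horizontal and vertical half-spaces as functions of enclosed weighted volume $v$, a direct differentiation yields
\[
F''(v)=\frac{\omega''(s)-2c}{F(v)}\leq -\frac{2c}{F(v)},\qquad G''(v)=-\frac{2c}{G(v)},
\]
with strict inequality on the set where $\omega''(s)<0$. Both profiles vanish at $v=0$ and at $v=V_f(\rrn)$, since the weighted area of a horizontal or vertical hyperplane tends to $0$ as the parameter escapes to $\pm\infty$ (the Gaussian factor $-cs^2$ dominates the at-most-linear $\omega$). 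Lemma~\ref{lem:analysis} then yields $F\geq G$, and equality at an interior point would force $F\equiv G$, hence $\omega''\equiv 0$, contradicting the non-affine hypothesis. Thus $F>G$ on $(0,V_f(\rrn))$, eliminating horizontal minimizers.

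In case (ii), completing the square in $\omega(\pi(p))-c|p|^2$ when $\omega(t)=\alpha t+\beta$ gives
\[
\omega(\pi(p))-c|p|^2=-c\left|p-\tfrac{\alpha}{2c}\,\ptl_t\right|^2+\beta+\tfrac{\alpha^2}{4c},
\]
so $f$ coincides with the Gaussian density $\ga_c$ up to a translation and a positive multiplicative constant, and the weighted isoperimetric problem becomes the Gaussian one. Any minimizer is weighted stable, so Corollary~\ref{cor:wholeGauss} forces it to be a half-space; conversely, the rotational and translational invariance of $\ga_c$ shows that any two half-spaces of equal weighted volume have equal weighted perimeter, so every half-space is a weighted minimizer.

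The main obstacle is the comparison step in case (i): one must verify in the unbounded ambient $\rrn$ that $F$ and $G$ extend continuously with zero values to both endpoints of $[0,V_f(\rrn)]$, so that Lemma~\ref{lem:analysis} applies. The Gaussian-type decay forced by the concavity of $\omega$ provides the required asymptotic behaviour, and once this is in place the remainder of the argument is a routine adaptation of Proposition~\ref{prop:comparison}.
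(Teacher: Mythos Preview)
Your argument is correct and, for the existence/regularity/stability reduction and for case~(i), it matches the paper's proof essentially step for step: the paper likewise invokes Theorem~\ref{th:stableGausswhole} to reduce to horizontal or vertical half-spaces, then compares the two profile functions $F,G$ via the differential inequalities $F''\leq -2cF^{-1}$, $G''=-2cG^{-1}$, notes that both vanish at the endpoints of $[0,V_f(\rrn)]$, and applies Lemma~\ref{lem:analysis} exactly as you do.

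The one genuine difference is in case~(ii). You observe directly that when $\omega$ is affine the density $f$ is a translated and rescaled Gaussian, so the full rotational symmetry of $\ga_c$ is available and all half-spaces of a given volume have the same perimeter. The paper instead stays within the ODE framework: when $\omega''\equiv 0$, the profile function associated to \emph{any} parallel family of hyperplanes satisfies the same equation $F''=-2cF^{-1}$ with the same zero boundary values, so Lemma~\ref{lem:analysis} forces all such profiles to coincide. Your route is more geometric and arguably more transparent; the paper's route has the virtue of treating both cases with a single mechanism and avoids the (easy but extra) verification that translation and scaling of the density do not affect the isoperimetric problem.
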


\begin{proof}
Following the proof of Theorem~\ref{th:isoperimetric} and using the classification of weighted stable sets in Theorem~\ref{th:stableGausswhole}, we obtain that weighted minimizers of any volume exist, and they are all Euclidean half-spaces, up to sets of volume zero. Now we distinguish two cases.

\emph{Case 1.} If $\omega$ is not an affine function, then the boundary of any minimizer is either a horizontal or a vertical hyperplane. Let $F,G:[0,V_f(\rrn)]$ be the profile functions for these two families of candidates. From the computations in the proof of Proposition~\ref{prop:comparison}, we get $F''\leq -2c F^{-1}$ and $G''=-2c\,G^{-1}$ in $(0,V_f(\rrn))$. Since $F$ and $G$ coincides at the extremes, an application of Lemma~\ref{lem:analysis} yields $F\geq G$ in $[0,V_f(\rrn)]$. In fact, if $F(v_0)=G(v_0)$ for some $v_0\in (0,V_f(\rrn))$, then we would get $F=G$ in $[0,V_f(\rrn)]$, and so the function $F$ would satisfy $F''=-2c F^{-1}$. From here, we would deduce that $\omega''=0$ in $\rr$, a contradiction. So, we conclude that $F>G$ in $(0,V_f(\rrn))$, which proves the claim.

\emph{Case 2.} If $\omega$ is an affine function, then $\omega''=0$ and $\omega'$ is constant in $\rr$. By following the computations in Proposition~\ref{prop:comparison}, it is easy to check that the profile function $F$ associated to an arbitrary parallel family $\{\Sg_s\}_{s\in\rr}$ of hyperplanes satisfies $F''=-2c F^{-1}$ in $(0,V_f(\rrn))$ and vanishes at the extremes. From Lemma~\ref{lem:analysis}, we infer that $F$ does not depend on the family $\{\Sg_s\}_{s\in\rr}$. As a consequence, any half-space in $\rrn$ is a weighted minimizer. 
\end{proof}

As a particular case of Theorem~\ref{th:isoperimetric2} we provide a new proof of the isoperimetric property of Euclidean half-spaces in the Gauss space. The reader is referred to the beginning of the Introduction for an account of different proofs and applications of this important result.

\begin{corollary}[Weighted minimizers in the Gauss space]
Half-spaces uniquely solve the isoperimetric problem in $\rrn$ with Gaussian density $\ga_c(p):=e^{-c |p|^2}$, $c>0$.
\end{corollary}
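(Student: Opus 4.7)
The plan is to deduce this corollary as a direct application of Theorem~\ref{th:isoperimetric2} in the special case where the perturbation term $\omega$ is identically zero. Since the Gaussian density $\ga_c$ corresponds precisely to the density $f=e^\psi$ with $\psi(p):=\omega(\pi(p))-c|p|^2$ when $\omega\equiv 0$, and the zero function is trivially smooth, concave, and affine on $\rr$, we are in the hypotheses of Theorem~\ref{th:isoperimetric2}.

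Specifically, I would invoke case (ii) of that theorem: because $\omega\equiv 0$ is an affine function, weighted isoperimetric regions of any given volume exist, and every weighted minimizer agrees up to a set of volume zero with some half-space of $\rrn$; moreover, every half-space of $\rrn$ is a weighted minimizer for its own weighted volume. Combined, these two statements yield the characterization claimed in the corollary.

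There is essentially no obstacle, since all the work has already been absorbed into the preceding results. The nontrivial content was done earlier: existence comes from Theorem~\ref{th:exist} via Lemma~\ref{lem:finitevolume}, interior regularity from Theorem~\ref{th:reg}, the null weighted capacity property of the regular part of the boundary from Theorem~\ref{th:nullcap}, the classification of weighted stable sets as half-spaces from Theorem~\ref{th:stableGausswhole} (or equivalently Corollary~\ref{cor:wholeGauss}), and the comparison among parallel families of half-spaces from the analytic Lemma~\ref{lem:analysis}. In the affine case, all parallel profile functions $F$ satisfy $F''=-2cF^{-1}$ on $(0,V_f(\rrn))$ with vanishing boundary values, and Lemma~\ref{lem:analysis} forces them to coincide, so no family of half-spaces is isoperimetrically preferred to another. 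Thus the proof amounts to a single sentence specialization and needs no new computation.
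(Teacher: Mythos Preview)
Your proposal is correct and matches the paper's approach: the corollary is obtained as the special case $\omega\equiv 0$ of Theorem~\ref{th:isoperimetric2}, and since the zero function is affine, case~(ii) applies to give existence, uniqueness up to null sets, and the fact that every half-space is a minimizer. The additional commentary you provide about the ingredients (existence, regularity, null capacity, stability classification, and the profile comparison via Lemma~\ref{lem:analysis}) accurately traces the logical dependencies, though the paper itself simply states the corollary as an immediate consequence without repeating them.
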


\subsection{The general case}
\label{subsec:non-smooth}
\noindent

We now turn to discuss the isoperimetric question in a half-space or slab $\Om:=\rr^n\times (a,b)$ with a density $f$ as in \eqref{eq:perturb}, where $\omega$ is concave on $(a,b)$ \emph{and possibly non-smooth} in the closure of $(a,b)$. 

An important result in this direction was obtained by Brock, Chiacchio and Mercaldo~\cite[Thm.~2.1]{bcm}. They proved that, in $\Om:=\rr^n\times\rr^+$ with density $f(z,t):=t^m\,e^{-(|z|^2+t^2)/2}$, where $m\geq 0$, the intersections with $\Om$ of half-spaces perpendicular to $\ptl\Om$ uniquely minimize the weighted perimeter for fixed weighted volume. Note that the density $f$ is a particular case of \eqref{eq:perturb} with $c:=1/2$ and $\omega(t):=m \log(t)$. However, this statement does not follow from our Theorem~\ref{th:isoperimetric} since $\omega$ is not smooth on $\rr^+_0$ when $m>0$. For the proof, the authors employed optimal transport to define a $1$-Lispchitz map $T:\rrn\to\Om$ equaling the identity on $\rr^n\times\{0\}$ and pushing the Gaussian measure forward the weighted volume associated to $f$. From here, the isoperimetric comparison can be deduced from the Gaussian isoperimetric inequality. Recently, we heard from E.~Milman that this approach also leads to the isoperimetric property of half-spaces perpendicular to $\ptl\Om$ in the general case. The main fact is that a density as in \eqref{eq:perturb} is the product of an $n$-dimensional Gaussian factor and a $1$-dimensional log-concave Gaussian perturbation. Hence, we can reproduce the arguments in \cite[Thm.~2.1]{bcm} and \cite[Thm.~2.2]{milman-spectral} to prove the next result, which generalizes \cite[Thm.~2.1]{bcm}.

\begin{theorem}[The general case]
\label{th:nuevo}
Consider an open half-space or slab $\Om:=\rr^n\times (a,b)$ endowed with the density $f=e^\psi$, where $\psi(p):=\omega(\pi(p))-c |p|^2$ for some concave function $\omega:(a,b)\to\rr$ and $c>0$. Then, the intersections with $\Om$ of half-spaces perpendicular to $\ptl\Om$ are always weighted isoperimetric regions. Moreover, any weighted minimizer is the intersection with $\Om$ of a half-space parallel or perpendicular to $\ptl\Om$. If, in addition, we suppose that $\omega\in C^\infty(a,b)$, then half-spaces perpendicular to $\ptl\Om$ are the unique weighted minimizers.
\end{theorem}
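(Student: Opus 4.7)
The plan is to adapt the optimal transport strategy of Brock--Chiacchio--Mercaldo \cite{bcm} and Milman \cite{milman-spectral}, exploiting the product decomposition $f(z,t) = e^{-c|z|^2}\,e^{\omega(t)-ct^2}$. Setting $g_c(t):=e^{-ct^2}$ and $h(t):=e^{\omega(t)-ct^2}$, and denoting by $Z_c,Z_h$ their integrals over $\rr$ and $(a,b)$ (finite by Lemma~\ref{lem:finitevolume}), I let $S:\rr\to(a,b)$ be the increasing monotone rearrangement pushing $g_c\,dt/Z_c$ forward to $h\,dt/Z_h$. Since $-\log h(t) = ct^2-\omega(t)$ is a concave perturbation of the Gaussian potential $ct^2$, the one-dimensional Caffarelli contraction principle yields $0<S'\leq 1$; when $\omega$ is merely concave, this follows by mollifying $\omega$ and passing to the limit. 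The product map $T(z,t):=(z,S(t))$ is then a $1$-Lipschitz bijection $\rr^{n+1}\to\Om$ pushing the normalized Gaussian measure $\bar\ga_c:=\ga_c/V_{\ga_c}(\rr^{n+1})$ onto $\bar f:=f/V_f(\Om)$.

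The $1$-Lipschitz property of $T$, together with the fact that it transports $\bar\ga_c$ to $\bar f$, yields for every Borel set $E\subeq\Om$ with $F:=T^{-1}(E)$ both the identity $\bar\ga_c(F)=\bar f(E)$ and the transport--perimeter inequality $P_{\bar\ga_c}(F,\rr^{n+1})\leq P_{\bar f}(E,\Om)$. For smooth $E$ this follows from a direct Jacobian calculation, and the general case is obtained by approximation and lower semicontinuity via Lemma~\ref{lem:percont}. Combined with the Gaussian isoperimetric inequality, this produces $P_{\bar f}(E,\Om)\geq I_{\bar\ga_c}(\bar f(E))$. A short product-structure computation shows that the perpendicular half-space $H:=\{z_1<\alpha\}\cap\Om$ satisfies $T^{-1}(H)=\{z_1<\alpha\}$ and $P_{\bar f}(H,\Om)=P_{\bar\ga_c}(T^{-1}(H),\rr^{n+1})=I_{\bar\ga_c}(\bar f(H))$, so $H$ is a weighted minimizer for any weighted volume, proving the first claim.

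To classify minimizers, suppose $E\subeq\Om$ saturates this chain of inequalities and set $F:=T^{-1}(E)$. The rigidity in Carlen--Kerce \cite{ck} forces $F$ to be, up to a null set, a half-space $\{\xi\cdot p<\alpha\}$ with unit normal $\xi=(u,v_0)\in\rr^n\times\rr$. A computation with $DT=\mathrm{diag}(I_n,S'(t))$ on the hyperplane $\ptl F$ gives $da_E\circ T=\sqrt{v_0^2+|u|^2\,S'(t)^2}\,da_F$, while $\bar f\circ T\cdot S'=\bar\ga_c$ by the pushforward. Hence the pointwise ratio of integrands in $P_{\bar f}(E,\Om)$ and $P_{\bar\ga_c}(F,\rr^{n+1})$ is $\sqrt{v_0^2+|u|^2\,S'(t)^2}/S'(t)\geq 1$, with equality iff $v_0^2\,(1-S'(t)^2)=0$. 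Saturation $\h^n$-a.e.\ on $\ptl F$ then forces $v_0=0$ (so $E$ is perpendicular to $\ptl\Om$) or $u=0$ (so $E=\rr^n\times(a,S(\alpha))$ is parallel to $\ptl\Om$); the remaining case $u\neq 0$ and $v_0\neq 0$ would require $S'(t)=1$ for a.e.\ $t\in\rr$, contradicting $\int_\rr S'\,dt=b-a<\infty$ since $\Om\subsetneq\rr^{n+1}$. This proves the second claim.

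When $\omega\in C^\infty(a,b)$, Proposition~\ref{prop:comparison} shows that any half-space parallel to $\ptl\Om$ has strictly greater weighted perimeter than a perpendicular half-space of the same weighted volume, so parallel half-spaces are excluded and the uniqueness assertion follows. The main obstacle is the rigorous justification of the transport--perimeter inequality and the Jacobian analysis when $\omega$ is only concave (not $C^2$); this is handled by mollifying $\omega$ to a smooth concave $\omega_\eps$, applying the smooth result, and passing to the limit using Lemma~\ref{lem:percont}~(i) together with the uniform $1$-Lipschitz bound on the approximating maps $S_\eps$.
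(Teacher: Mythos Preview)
Your proposal is essentially the paper's own proof: construct the one-dimensional Brenier map $S:\rr\to(a,b)$, invoke Caffarelli's contraction theorem to get $S'\leq 1$, form the product map $T(z,t)=(z,S(t))$, derive the transport--perimeter inequality via the Jacobian computation and approximation (Lemma~\ref{lem:percont}), combine with the Gaussian isoperimetric inequality and Carlen--Kerce rigidity, and finish with Proposition~\ref{prop:comparison} when $\omega$ is smooth. The only cosmetic differences are that the paper cites Kim--Milman \cite{contraction} for the non-smooth extension of Caffarelli's theorem rather than mollifying, and writes the Jacobian in the equivalent form $\sqrt{1+(\rho'^2-1)|\nabla_{\Sg'}\pi|^2}\geq\rho'$ before specializing to a hyperplane.

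There is one small imprecision in your rigidity step. To rule out an oblique half-space you argue that $S'(t)=1$ a.e.\ would give $\int_\rr S'\,dt=b-a<\infty$, but when $\Om$ is a half-space (not a slab) one has $b-a=\infty$ and this inequality is vacuous. The correct observation---which you clearly intend, given your clause ``since $\Om\subsetneq\rrn$''---is simply that $S'(t)=1$ a.e.\ forces $S(t)=t+c_0$, and an affine map of slope~$1$ cannot send $\rr$ onto a proper subinterval $(a,b)\subsetneq\rr$. The paper phrases this the same way (``otherwise $\rho:\rr\to(a,b)$ would be an affine function'').
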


\begin{proof}
We consider the one-dimensional measures $\mu_1:=\alpha\,e^{-cs^2}\,ds$ and $\mu_2:=\beta\,e^{\omega(t)-ct^2}\,dt$, where $\alpha$ and $\beta$ are constants so that $\mu_1(\rr)=\mu_2\big((a,b)\big)=1$. Let $\rho:\rr\to (a,b)$ be a non-decreasing and $C^1$ smooth function pushing $\mu_1$ forward $\mu_2$. This means that $\mu_2(D)=\mu_1(\rho^{-1}(D))$ for any $D\subeq (a,b)$. Then, a straightforward computation shows that
\[
\alpha\,\int_{-\infty}^se^{-cu^2}\,du=\beta\,\int_a^{\rho(s)}e^{\omega(u)-cu^2}\,du,\quad s\in\rr,
\]
from which we get
\begin{equation}
\label{eq:seso1}
\alpha\,e^{-c s^2}=\beta\,e^{\omega(\rho(s))-c\rho(s)^2}\rho'(s),\quad s\in\rr.
\end{equation}
On the other hand, the function $\rho$ coincides with the optimal transport Brenier map pushing $\mu_1$ forward $\mu_2$, see \cite[Thms.~1 and 2]{caffarelli}. This means that $\rho$ minimizes the cost $\int |s-\rho(s)|^2\,d\mu_1$ among all functions pushing $\mu_1$ forward $\mu_2$. Hence, we can invoke a result of Caffarelli~\cite[Thm.~11]{caffarelli}, see also Kim and Milman~\cite[Lem.~3.3]{contraction} for an extension to non-smooth measures, to deduce that $\rho$ is a $1$-Lispchitz function. In particular, we have 
\begin{equation}
\label{eq:seso2}
\rho'(s)\leq 1, \quad s\in\rr.
\end{equation}

Now, we define the $C^1$ map $T:\rr^n\times\rr\to\Om$ given by $T(z,s):=(z,\rho(s))$. It is clear that $T$ is a $1$-Lipschitz map. Moreover, as $\rho$ pushes $\mu_1$ forward $\mu_2$, then
\begin{equation}
\label{eq:seso3}
V_{\ga_c}\big(T^{-1}(E)\big)=\frac{\beta}{\alpha}\,V_f(E), \quad E\subeq\Om,
\end{equation}
where $\ga_c(z,s):=e^{-c|z|^2-cs^2}$ for any $(z,s)\in\rr^n\times\rr$. Let us see that the weighted perimeter $P_f(E,\Om)$ defined in \eqref{eq:wp2} satisfies
\begin{equation}
\label{eq:seso4}
P_f(E,\Om)\geq\frac{\alpha}{\beta}\,P_{\ga_c}(T^{-1}(E),\rrn), \quad E\sub\Om.
\end{equation}

Suppose first that $E$ is open and the boundary $\ptl E\cap\Om$ is a smooth hypersurface $\Sg$. We denote $\Sg':=T^{-1}(\Sg)$. For any $p=(z,s)\in\Sg'$, let $|\text{Jac}_{\Sg'}T|(z,s)$ be the squared root of the determinant of the matrix $(\escpr{u_i(T),u_j(T)})$, where $\{u_1,\ldots,u_n\}$ is an orthonormal basis of $T_p\Sg'$. Then, it is not difficult to check that
\[
|\text{Jac}_{\Sg'}T|(z,s)=\sqrt{1+\big(\rho'(s)^2-1\big)\,|\nabla_{\Sg'}\pi|^2(p)},
\]
where $\nabla_{\Sg'}\pi$ is the gradient of $\pi$ relative to $\Sg'$. By using \eqref{eq:seso2} and $|\nabla_{\Sg'}\pi|(p)\leq 1$, we get
\[
|\text{Jac}_{\Sg'}T|(z,s)\geq\rho'(s),
\]
and equality holds if and only if $\rho'(s)=1$ or the vertical vector $\ptl_s$ is tangent to $\Sg'$ at $p$. By taking into account Lemma~\ref{lem:percont} (ii), the previous inequality and equation \eqref{eq:seso1}, we obtain
\begin{align}
\label{eq:seso41}
P_f(E,\Om)&=\int_\Sg f\,da=\int_{\Sg'}(f\circ T)\,|\text{Jac}_{\Sg'}T|\,da
\\
\nonumber
&=\int_{\Sg'}e^{-c|z|^2}e^{\omega(\rho(s))-c\rho(s)^2}\,|\text{Jac}_{\Sg'}T|(z,s)\,da
\\
\nonumber
&\geq\frac{\alpha}{\beta}\,\int_{\Sg'}\ga_c(p)\,da=\frac{\alpha}{\beta}\,P_{\ga_c}(T^{-1}(E),\rrn),
\end{align}
where $da$ is the Euclidean element of area. Moreover, equality holds if and only if, for any point $p=(z,s)\in\Sg'$, we have $\rho'(s)=1$ or $\ptl_s\in T_p\Sg'$. This proves \eqref{eq:seso4} when $E$ is open and $\ptl E\cap\Om$ is smooth.

Suppose now that $E\sub\Om$ is any Borel set. From Lemma~\ref{lem:percont} (iii) we can find a sequence $\{E_k\}_{k\in\mathbb{N}}$ of open sets in $\Om$ such that $\ptl E_k\cap\Om$ is smooth, $\{E_k\}_{k\in\mathbb{N}}\to E$ in $L^1(\Om,dv_f)$ and $\lim_{k\to\infty}P_f(E_k,\Om)=P_f(E,\Om)$. By applying inequality \eqref{eq:seso4} to $E_k$ we get
\begin{equation}
\label{eq:seso6}
P_f(E_k,\Om)\geq\frac{\alpha}{\beta}\,P_{\ga_c}(T^{-1}(E_k),\rrn),\quad k\in\mathbb{N}.
\end{equation}
On the other hand, note that $\{T^{-1}(E_k)\}_{k\in\mathbb{N}}\to T^{-1}(E)$ in $L^1(\rrn,dv_{\ga_c})$. This is a consequence of \eqref{eq:seso3} since
\[
V_{\ga_c}\big(T^{-1}(E_k)\,\triangle\,T^{-1}(E)\big)=V_{\ga_c}\big(T^{-1}(E_k\,\triangle\,E)\big)=\frac{\beta}{\alpha}\,V_f(E_k\,\triangle\,E),
\]
and $\{E_k\}_{k\in\mathbb{N}}\to E$ in $L^1(\Om,dv_f)$. By taking limits in \eqref{eq:seso6} and using the lower semicontinuity of the weighted perimeter in Lemma~\ref{lem:percont} (i) we deduce inequality \eqref{eq:seso4}. 

Finally, take a Borel set $E\sub\Om$ and a half-space $H$ in $\rrn$ perpendicular to $\ptl\Om$ with $V_{\ga_c}(H)=V_{\ga_c}(T^{-1}(E))$. By combining the perimeter comparison in \eqref{eq:seso4} with the Gaussian isoperimetric inequality, we obtain
\begin{equation}
\label{eq:seso7}
P_f(E,\Om)\geq\frac{\alpha}{\beta}\,P_{\ga_c}(T^{-1}(E),\rrn)\geq\frac{\alpha}{\beta}\,P_{\ga_c}(H,\rrn)=P_f(H\cap\Om,\Om).
\end{equation}
The last equality comes from \eqref{eq:seso41} since $\ptl_s$ is always tangent to $\ptl H$. Note also that $V_f(H\cap\Om)=V_f(E)$ by \eqref{eq:seso3}. Hence, we have proved that half-spaces perpendicular to $\ptl\Om$ are weighted minimizers in $\Om$. Moreover, if $E$ is a weighted isoperimetric region in $\Om$, then equality holds in \eqref{eq:seso7}. In particular, $T^{-1}(E)$ is a Gaussian minimizer, and so it coincides, up to a set of volume zero, with a Euclidean half-space. In addition, we have also equality in \eqref{eq:seso41}, which implies that $T^{-1}(E)$ is either parallel or perpendicular to $\ptl\Om$ (otherwise $\rho:\rr\to (a,b)$ would be an affine function). If we further assume $\omega\in C^\infty(a,b)$ then we conclude from Proposition~\ref{prop:comparison} that perpendicular half-spaces are isoperimetrically better than parallel ones. 
\end{proof}

In the case $\Om=\rrn$ we can use the same arguments to prove the following result.

\begin{theorem}[The general case in $\rrn$]
\label{th:nuevo2}
Consider the density $f=e^\psi$, where $\psi(p):=\omega(\pi(p))-c |p|^2$ for some concave function $\omega:\rr\to\rr$ and $c>0$. Then, vertical half-spaces are always weighted minimizers. Moreover, any weighted minimizer must be a half-space.
\end{theorem}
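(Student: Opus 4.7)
The plan is to adapt the optimal-transport argument from Theorem~\ref{th:nuevo} to the whole-space case, which is actually cleaner since no free boundary is present.

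First I would introduce the one-dimensional probability measures $\mu_1:=\alpha\,e^{-cs^2}\,ds$ and $\mu_2:=\beta\,e^{\omega(t)-ct^2}\,dt$ on $\rr$, with $\alpha,\beta>0$ chosen so that both have total mass one; integrability of $\mu_2$ follows from the concavity of $\omega$ via Lemma~\ref{lem:finitevolume}. Let $\rho:\rr\to\rr$ be the unique monotone non-decreasing map pushing $\mu_1$ forward $\mu_2$. Since $\omega$ is concave, $-\log(d\mu_2/dt)=ct^2-\omega(t)$ is $2c$-uniformly convex in the distributional sense, so by the Caffarelli contraction theorem (in the non-smooth extension of Kim and Milman already invoked in Theorem~\ref{th:nuevo}) $\rho$ is $1$-Lipschitz. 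I would then define $T:\rrn\to\rrn$ by $T(z,s):=(z,\rho(s))$; this map is $1$-Lipschitz and satisfies $V_{\ga_c}(T^{-1}(E))=(\beta/\alpha)\,V_f(E)$ for every Borel set $E\sub\rrn$.

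Next I would reproduce the Jacobian computation and approximation argument of Theorem~\ref{th:nuevo} verbatim, now without the boundary terms. This yields the perimeter inequality
\[
P_f(E,\rrn)\;\geq\;\frac{\alpha}{\beta}\,P_{\ga_c}(T^{-1}(E),\rrn)
\]
for every Borel $E\sub\rrn$, with equality characterized in the smooth case by the requirement that at every point $p=(z,s)$ of $T^{-1}(\ptl E)$ either $\rho'(s)=1$ or $\ptl_s$ is tangent to the boundary. Combining this comparison with the Gaussian isoperimetric inequality, and noting that any vertical half-space $H$ satisfies $T^{-1}(H)=H$, has $\ptl_s$ tangent to $\ptl H$, and fulfills $V_f(H)=V_f(E)$ whenever $V_{\ga_c}(H)=V_{\ga_c}(T^{-1}(E))$, one obtains a chain $P_f(E,\rrn)\geq(\alpha/\beta)\,P_{\ga_c}(T^{-1}(E),\rrn)\geq(\alpha/\beta)\,P_{\ga_c}(H,\rrn)=P_f(H,\rrn)$. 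Hence vertical half-spaces are weighted minimizers.

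For the structural claim, suppose $E\sub\rrn$ is an arbitrary weighted minimizer with vertical competitor $H$ of the same weighted volume. Then both inequalities in the chain above are equalities. Equality in the Gaussian isoperimetric inequality, together with its known uniqueness of equality cases, forces $T^{-1}(E)$ to coincide, up to a set of $\ga_c$-volume zero, with a Euclidean half-space $K\sub\rrn$. Equality in the perimeter inequality then requires that at almost every point of $\ptl K$ either $\rho'(s)=1$ or $\ptl_s$ is tangent to $\ptl K$. If $K$ is vertical then $T(K)=K$ and $E=K$ is a vertical half-space. If $K$ is not vertical, then $\ptl K$ is transverse to $\ptl_s$ at every point, so the equality condition forces $\rho'(s)=1$ for almost every $s\in\rr$; then $\rho$ is a rigid translation, $T$ is a vertical shift of $\rrn$, and $E=T(K)$ is again a half-space.

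The principal obstacle I anticipate is making the equality analysis rigorous for non-smooth $E$: one must argue that the smooth approximation of Lemma~\ref{lem:percont}~(iii), combined with the lower semicontinuity in Lemma~\ref{lem:percont}~(i), preserves the equality case in the Jacobian bound, exactly as in Theorem~\ref{th:nuevo}. Beyond this technical point, all three ingredients, Caffarelli's contraction, the Jacobian inequality, and the Gaussian isoperimetric comparison, transfer to $\Om=\rrn$ without modification.
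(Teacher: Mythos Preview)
Your proposal is correct and follows exactly the approach the paper intends: the paper's own proof of Theorem~\ref{th:nuevo2} consists of the single sentence ``we can use the same arguments'' as in Theorem~\ref{th:nuevo}, and you have carried those arguments out faithfully. One small correction: in your final case analysis, when $K$ is \emph{horizontal} the boundary $\ptl K$ lies in a single slice $\{s=s_0\}$, so the equality condition only yields $\rho'(s_0)=1$, not $\rho'\equiv 1$; however $E=T(K)$ is then automatically the horizontal half-space $\{t<\rho(s_0)\}$ since $\rho$ is monotone, so the conclusion still holds without needing $\rho$ to be a translation in that subcase.
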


\begin{remark}
\label{re:moreproofs}
The isoperimetric property of half-spaces perpendicular to $\ptl\Om$ can be established by a variety of ways. Here we gather some additional proofs. 

1. The first proof was explained to us by F.~Barthe. Indeed, as the function $f$ is a product density, then we can use tensorization properties as in Barthe and Maurey~\cite[Remark after Prop.~5]{bm} to obtain that the weighted isoperimetric profile $I_{\Om,f}$ defined in \eqref{eq:isopro} of the density $f$ (normalized to have unit weighted volume) is bounded from below by the profile $I_\ga$ of the Gaussian probability density in $\rrn$. By inspecting a half-space $H$ perpendicular to $\ptl\Om$ and using the Gaussian isoperimetric inequality we conclude that $I_{\Om,f}=I_\ga$ and $H$ is a weighted minimizer. 

2. The second proof relies on symmetrization with respect to a model measure as described by Ros~\cite[Thms.~22 and 23]{rosisoperimetric}. By taking into account that half-lines are weighted minimizers for any log-concave density on the real line, see \cite[Sect.~4]{rcbm} and the references therein, we deduce that $I_{\Om,f}\geq I_\ga$. Now, we can finish as in the previous proof.

3. The third proof is deduced from Theorem~\ref{th:isoperimetric} by approximation arguments. First, we consider parallel interior domains to $\Om$ in order to show the claim when $\omega\in C^\infty(a,b)$. Second, we construct a sequence of smooth concave functions $\omega_k$ for which the previous step can be applied. For a detailed development the reader is referred to the first version of the present paper, see [arXiv:1403.4510, Thm.~5.12].
\end{remark}

\providecommand{\bysame}{\leavevmode\hbox to3em{\hrulefill}\thinspace}
\providecommand{\MR}{\relax\ifhmode\unskip\space\fi MR }
\providecommand{\MRhref}[2]{%
  \href{http://www.ams.org/mathscinet-getitem?mr=#1}{#2}
}
\providecommand{\href}[2]{#2}

\end{document}